\documentclass[a4paper,11pt]{amsart}
\usepackage[T1]{fontenc}
\usepackage[english]{babel}
\usepackage[cp1252]{inputenc}
\usepackage{amsthm}
\usepackage{amsmath}
\usepackage{amsfonts}
\usepackage{tikz-cd}
\usepackage{pgfplots}
 \usepackage{microtype}
\usepackage[margin=0.8in]{geometry}

\usepackage{xcolor}
 \pgfplotsset{compat=1.18}
\usepgfplotslibrary{fillbetween}

\usepackage{amssymb}
\usepackage{hyperref}
\usepackage{color}
\usepackage{caption}
\usepackage{indentfirst}
\usepackage{amssymb}
\usepackage{eufrak}
\usepackage{mathrsfs}
\usepackage{xypic}

 \usepackage{booktabs,array,graphicx,longtable}
\usepackage{array,booktabs,tabularx}

\usepackage{booktabs}
\usepackage{listings}
\theoremstyle{plain}   

\usepackage{booktabs}
\usepackage{graphicx}
\usepackage{array}

\definecolor{amoebadarkblue}{RGB}{0,63,127}
\definecolor{contourred}{RGB}{224,0,0}

\newif\ifhighdensityplot
\highdensityplotfalse

\definecolor{ContourColor}{RGB}{100,0,10}

\newtheorem{question}{Question}

\newcommand{\R}{\mathbb R}
\newcommand{\C}{\mathbb C}

\newcommand{\Z}{\mathbb Z}

\newcommand{\Log}{\mathrm{Log}\,}

\newcommand{\Q}{\mathbb Q}

 \newcommand{\Logmap}{\operatorname{Log}}

\newcommand{\di}{\displaystyle}

\newcommand{\Crit}{\operatorname{Crit}}

\newcommand{\Area}{\operatorname{Area}}

\newcommand{\supp}{\operatorname{supp}}

\newcommand{\Newt}{\operatorname{Newt}}

\newcommand{\Res}{\operatorname{Res}}

\usepackage{array}
\usepackage{tabularx}

 \usepackage{caption}
 
\newcommand\restr[2]{{% we make the whole thing an ordinary symbol
  \left.\kern-\nulldelimiterspace % automatically resize the bar with \right
  #1 % the function
  \right|_{#2} % this is the delimiter
  }}

\newtheorem{remark}{Remark}[section]
\newtheorem*{mtheorem*}{Main Theorem}
\newtheorem{theorem}{Theorem}[section]
\newtheorem{definition}{Definition}[section]
\newtheorem{proposition}{Proposition}[section]
\newtheorem{corollary}{Corollary}[section]
\newtheorem{lemma}{Lemma}[section]
\begin{document}
\title{Beyond Harnack Rigidity}  
\author{Mounir Nisse}
\date{}

 \email{\href{mailto:mounir.nisse@gmail.com}{mounir.nisse@gmail.com}, {mounir.nisse@xmu.edu.my}}

\urladdr{www.math.tamu.edu/\~{}nisse}
 
\thanks{This research is supported in part by Xiamen University Malaysia Research Fund (Grant no. XMUMRF/ 2024-C5/IMAT/0013).}

\subjclass[2020]{14T20, 14H50, 14M25, 14P05}
 
\keywords{Amoeba, amoeba contour, logarithmic critical locus, simple Harnack curve, toric curve, contour--boundary equality, monomial covering}

\maketitle

\begin{abstract}
The boundary of a plane amoeba is always contained in its contour, and equality is a characteristic feature of simple Harnack curves. We show that the converse fails, even under strong smoothness and nondegeneracy assumptions. For every
   two-dimensional lattice polygon,  except unimodular triangles,
   we construct a smooth Newton-nondegenerate curve with smooth logarithmic critical locus and smooth embedded contour satisfying $\mathcal C(\mathscr A_f)=\partial\mathscr A_f$, although the curve is not Harnack. We also provide explicit primitive and nonprimitive families that are not torus-equivalent to simple Harnack curves. A concrete primitive example is certified by exact elimination and Sturm root counting. These results disprove contour--boundary rigidity and show that the contour as a set does not detect the real structure or the multiplicity of coincident critical sheets, thereby refining the compensation problem proposed by Lang, Shapiro, and Shustin.
\end{abstract}

\section{Introduction}

Amoebas transform complex algebraic curves into real geometric objects whose shape reflects algebraic, analytic, and combinatorial information.  If $C\subset(\C^*)^2$ is a curve, its amoeba is the image of $C$ under the logarithmic map $\Log(z,w)=(\log|z|,\log|w|)$.  The Newton polygon governs the asymptotic directions of this image, while the topology of the complement and the geometry of the boundary retain traces of the defining equation.  This interaction has made amoebas a natural meeting point of toric geometry, real algebraic geometry, and tropical geometry.

Simple Harnack curves provide the most ordered examples.  Mikhalkin introduced them through their extremal position in toric surfaces and established the close relation between their real loci and amoeba boundaries \cite{Mikhalkin2000}.  Their moduli and deformation theory reveal the persistence of this exceptional geometry \cite{Olarte2021}, while generalizations based on the total reality of the logarithmic Gauss map show that the Harnack phenomenon belongs to a broader real-geometric picture \cite{Lang2024}.  The extremal character of these curves is also measured by area: Mikhalkin and Rullg{\aa}rd proved that the area of a plane amoeba with Newton polygon $\Delta$ is bounded above by $\pi^2\operatorname{Area}(\Delta)$ and that equality is governed by Harnack geometry \cite{MikhalkinRullgard2001}.

The contour of an amoeba records the critical values of the logarithmic map restricted to the curve.  The boundary is contained in the contour under the usual regularity assumptions, but the two sets need not agree.  A general contour may contain critical branches inside the amoeba and may develop crossings, cusps, tangencies, or other singularities even when the algebraic curve is smooth.  The contour therefore records both the visible folds forming the boundary and additional folds hidden in the interior.

Lang, Shapiro, and Shustin studied this geometry through the real degree of the contour, measured by its maximal number of transverse intersections with a line \cite{LangShapiroShustin2021}.  Their work emphasizes that contours are semi-analytic but need not be analytic in general.  They raise the challenging problem of finding simple nondegeneracy conditions that guarantee analyticity.  The present work is motivated by this question and examines a particularly regular regime in which the logarithmic critical locus is smooth and the contour is a smooth embedded real-analytic curve.

The first main result shows that such regularity is not restricted to special Newton polygons or to the standard real Harnack setting.  For every two-dimensional lattice polygon $\Delta$, we prove the existence of a Laurent polynomial $f\in\mathcal S_\Delta$ for which the curve $V_f$ is smooth and Newton nondegenerate, the logarithmic critical locus is smooth, and the contour is a smooth embedded real-analytic curve satisfying $\mathcal C(\mathscr A_f)=\partial\mathscr A_f$.  Moreover, $f$ may be chosen nonreal even after multiplication by a nonzero constant.  Consequently, $V_f$ need not be a Harnack curve with respect to the standard real structure.

This universal existence theorem gives a broad answer to the regularity aspect of the problem.  It shows that every lattice polygon supports curves whose complete critical-value image is as regular as one could naturally request and contains no branch in the interior of the amoeba.  Neither the combinatorial type of the polygon nor standard reality prevents this phenomenon.  Smooth contour--boundary equality is therefore a flexible feature across the entire class of plane Newton polygons.

The second main result makes this flexibility explicit through an infinite family.  For every integer $m>1$, consider
$$
f_m(z,w)=\left(z^m-\frac3{10}\right)w-(z^m+3)(z^m+10).
$$
Its Newton polygon is $\Delta_m=\operatorname{conv}\{(0,0),(2m,0),(m,1),(0,1)\}$.  The associated curve is smooth, its contour is smooth, and the contour coincides with the amoeba boundary.  Nevertheless, the curve is not torus-equivalent to any simple Harnack curve.  Thus the failure of Harnack rigidity is not confined to one isolated example: it occurs in a concrete family of increasing degree and with an immediately visible algebraic form.

The family $f_m$ is important for two reasons.  It gives a transparent source of non-Harnack curves with perfectly regular amoeba boundaries, and it separates torus equivalence from the weaker question of standard reality.  A curve may fail to be Harnack not simply because its displayed equation has nonreal coefficients, but because no allowable torus transformation can place it in the simple Harnack class.  The examples therefore expose a geometric obstruction rather than a superficial choice of coefficients.

The exponent structure of $f_m$ also points toward a sharper rigidity question.  Since the powers of $z$ occur in multiples of $m$, the family is naturally related to finite monomial coverings.  It remains reasonable to ask whether contour--boundary equality might force Harnack geometry after this covering mechanism is excluded.  This leads to the primitive-support problem, in which the exponent differences are required to generate the full lattice $\Z^2$.

The third main result answers that stronger question negatively.  We construct the explicit primitive curve defined by
$$
f_0(z,w)=1+z+\left(\frac{1}{100}-\frac{i}{5000}\right)z^2+w+
\left(\frac{1}{20}-\frac{7i}{200}\right)zw.
$$
The curve is irreducible and smooth, its toric compactification is smooth and Newton nondegenerate, and its exponent differences generate $\Z^2$.  Its logarithmic critical locus and contour are smooth, and the complete contour is exactly the boundary of the amoeba.  Yet the curve is not torus-equivalent to a real curve and hence is not torus-equivalent to a smooth simple Harnack curve.

Together, these results give a progressively stronger picture.  The universal theorem establishes smooth contour--boundary equality for every lattice polygon, even outside the standard real setting.  The family $f_m$ supplies infinitely many explicit curves that are not torus-equivalent to simple Harnack curves.  The primitive example then shows that the phenomenon survives after finite monomial pullbacks have been excluded.  The failure of Harnack rigidity is therefore neither polygon-specific, nor isolated, nor solely a consequence of nonprimitive support.

The relation with the Lang--Shapiro--Shustin question is especially revealing.  Their challenge asks when the critical-value image becomes analytic \cite{LangShapiroShustin2021}.  Our existence theorem shows that smooth embedded real-analytic contours equal to amoeba boundaries occur for every lattice polygon.  The explicit family and the primitive counterexample then show that this analytic regularity does not determine the real or Harnack nature of the underlying curve.  Regularity of the contour and rigidity of the curve are thus distinct problems.

This distinction reflects a basic loss of information under the logarithmic map.  Contour--boundary equality describes where the critical values occur, but it does not retain the phases of the coefficients, the identity of the critical lifts, or the multiplicities with which the complex curve covers the amoeba.  These missing data are essential to the Harnack property and to maximal area.  A contour can therefore be smooth, embedded, and entirely exposed on the boundary while the curve behind it remains fundamentally non-Harnack.

The area theorem of Mikhalkin and Rullg{\aa}rd makes this separation particularly clear \cite{MikhalkinRullgard2001}.  Maximal area controls more than the location of critical values; it captures the global organization of logarithmic sheets.  Our examples show that a perfectly regular boundary does not force this organization.  Contour--boundary equality should consequently not be treated as a substitute for maximal area or total reality.

An essential feature of the present work is that the completeness of the contours is rigorously established.  Numerical drawings are indispensable for discovering examples, but they cannot rule out a small interior critical arc or an isolated loss of regularity.  The proofs control the complete logarithmic image, including the noncompact ends, and certify that every critical value belongs to the boundary.  In the primitive example, this verification is exact and global.

The results suggest that any successful replacement for contour--boundary rigidity must include information invisible in the contour as a set.  Possible additional hypotheses involve maximal area, total reality of the logarithmic Gauss map, multiplicity conditions on regular logarithmic fibers, or phase constraints recovering a real form.  Simple Harnack curves remain the model in which real topology, contour, boundary, multiplicity, and area agree \cite{Mikhalkin2000,MikhalkinRullgard2001}; the present work shows that smoothness and contour--boundary equality alone recover only part of that structure.

In this way, the paper moves from a general existence theorem to an explicit infinite family and finally to a primitive counterexample.  The three results together clarify the scope of analytic contour regularity and locate the precise limitation of a natural Harnack characterization.  They also provide a concrete foundation for future multiplicity-sensitive and phase-sensitive approaches to amoeba geometry.

%%%%%%%%%%%%%%%%%%%%%%%%%%%%%%%%%%%%%%%%%%%%%%%%%%%%%%%%%%%%%%%%%%%%%%%

\noindent{\it Acknowledgements.} The author is deeply grateful to Boris Shapiro for kindly sharing the joint paper with Lionel Lang and Eugenii Shustin \cite{LangShapiroShustin2021}; the present work was inspired by the challenging question raised in that paper.
Also,     I am gratefully acknowledges the use of Python, NumPy, SciPy, Matplotlib, Pillow, and LaTeX with TikZ and PGFPlots for the exact computations, numerical experiments, and figures presented in this work.

%%%%%%%%%%%%%%%%%%%%%%%%%%%%%%%%%%%%%%%%%%%%%%%%%%%%%%%%%%%%%%%%%%%%%%%

\section{Conditions for Smoothness of the Contour of a Plane Amoeba}

Let $\Delta\subset\mathbb R^2$ be a two-dimensional lattice polygon and let
$\mathcal S_\Delta$ denote the set of Laurent polynomials
$f(z,w)=\sum_{\alpha\in\Delta\cap\mathbb Z^2}c_\alpha z^{\alpha_1}w^{\alpha_2}$
with $\Newt(f)=\Delta$.  Put
$V_f=\{(z,w)\in(\mathbb C^*)^2:f(z,w)=0\}$ and define
$\Log(z,w)=(\log|z|,\log|w|)$.  The amoeba, logarithmic critical locus,
and contour are
$\mathscr A_f=\Log(V_f)$,
$S_f=\Crit(\Log|_{V_f})$, and
$\mathcal C(\mathscr A_f)=\Log(S_f)$.
Assume throughout that $V_f$ is smooth, that its closure in the toric surface
$X_\Delta$ is Newton nondegenerate and transverse to the toric boundary, and
that $S_f$ is a smooth real one-dimensional submanifold of $V_f$.  These
hypotheses do not by themselves imply that the contour is smooth.  A smooth
critical curve can be mapped by $\Log$ to a curve with a cusp, and two
distinct smooth critical branches can have the same logarithmic value and
produce a node or a tangential intersection.  Smoothness of the source and
smoothness of its image are separate questions.
The logarithmic Gauss map is
$$
\gamma_f:V_f\longrightarrow\mathbb P^1_{\mathbb C},
\qquad
\gamma_f(z,w)=[zf_z(z,w):wf_w(z,w)].
$$
The standard critical-point criterion gives
$S_f=\gamma_f^{-1}(\mathbb P^1_{\mathbb R})$.  Thus the assumed smoothness
of $S_f$ is guaranteed, for example, when $\gamma_f$ is transverse to
$\mathbb P^1_{\mathbb R}$.  In this context transversality means that, for
every $p\in V_f$ satisfying $\gamma_f(p)\in\mathbb P^1_{\mathbb R}$, the
real differential of $\gamma_f$ maps $T_pV_f$ onto a direction which is not
contained in the tangent line of $\mathbb P^1_{\mathbb R}$.  Equivalently,
on an affine Gauss chart with
$h=zf_z/(wf_w)$, the real function $\rho=\operatorname{Im}h$ satisfies
$d(\rho|_{V_f})_p\ne0$ at every point of $S_f$.  This condition makes
$S_f=\{\rho=0\}$ a smooth real curve, but it says nothing yet about the
restriction of $\Log$ to this curve.

The exact local condition is that
$\ell_f=\Log|_{S_f}:S_f\to\mathbb R^2$ have rank one everywhere.  Since
$S_f$ is one-dimensional, this says that for every $p\in S_f$ and every
nonzero tangent vector $\tau\in T_pS_f$, one has
$$
d\ell_f{}_p(\tau)
=\left(
\operatorname{Re}\frac{\tau_z}{z},
\operatorname{Re}\frac{\tau_w}{w}
\right)\ne(0,0).
$$
When this condition holds, the constant-rank theorem shows that the image of
each sufficiently small critical branch is a smooth immersed real-analytic
arc.  Its failure is the ramification mechanism for a singular contour.  A
generic first-order failure produces a cusp, while higher vanishing can
produce a higher-order cusp or another higher-order image singularity.

\noindent There is a useful coordinate form of the ramification condition.  Choose
local real coordinates $(u,v)$ on $V_f$ and write
$X=\log|z|$, $Y=\log|w|$.  Since $S_f=\{\rho=0\}$ and $d\rho\ne0$, its
tangent line is $\ker d\rho$.  The map $\ell_f$ is ramified at $p$ exactly
when both $dX$ and $dY$ vanish on this kernel.  Equivalently,
$$
\det\frac{\partial(\rho,X)}{\partial(u,v)}(p)=0
\quad\hbox{and}\quad
\det\frac{\partial(\rho,Y)}{\partial(u,v)}(p)=0.
$$
The absence of simultaneous solutions of these two equations on $S_f$ is
the complete no-ramification condition.  It is invariant under the chosen
Gauss chart and the chosen coordinates on $V_f$.

No ramification is necessary for the critical-value parametrization to be
an immersion, but it is not sufficient for the contour to be an embedded
smooth curve.  One must also control distinct critical points with the same
logarithmic image.  Such points $p=(z,w)$ and $q=(Z,W)$ satisfy
$$
p,q\in S_f,
\qquad
|z|=|Z|,
\qquad
|w|=|W|.
$$
If $p\ne q$ and the two image tangent lines are distinct, their common value
is a transverse node.  If their tangent lines coincide but the image germs
are different, the contour has a tangency, a tacnode, or a higher-order
contact.  If three or more distinct critical branches have the same image,
one obtains a multiple point unless all their image germs coincide as one
smooth germ.

\vspace{0.1cm}

\noindent This leads to the exact necessary-and-sufficient criterion.  Under the
standing hypotheses, the contour $\mathcal C(\mathscr A_f)$ is a smooth
embedded one-dimensional real-analytic submanifold of $\mathbb R^2$ if and
only if $\ell_f$ has rank one at every point of $S_f$ and, for every
$x\in\ell_f(S_f)$, all local image germs
$\ell_f(U_p)$, where $p$ ranges over $\ell_f^{-1}(x)$ and $U_p$ is a small
neighborhood of $p$ in $S_f$, coincide with a single embedded
real-analytic arc through $x$.  The first clause excludes ramification.  The
second excludes every crossing, tangency, and multiple image germ, while
allowing the harmless possibility that several source branches cover the
same smooth image arc.

\vspace{0.1cm}

\noindent  The necessity is immediate.  If the contour is a smooth embedded curve, a
local parametrization of any contributing branch cannot have zero velocity,
and all contributing image germs at a fixed contour point must be the unique
germ of that smooth curve.  For sufficiency, the rank-one condition and the
constant-rank theorem make every local image an embedded analytic arc.  The
germ-coincidence condition says that the union of these arcs has exactly one
local branch at each image point.  Therefore their union is locally a
one-dimensional embedded real-analytic submanifold, which is precisely the
smoothness of the contour.

\vspace{0.1cm}

\noindent  A simpler and very useful sufficient criterion is obtained by excluding
all multiple lifts.  If $\ell_f$ is an immersion and is injective, then the
contour is smooth and embedded.  Indeed, $\Log:V_f\to\mathbb R^2$ is
proper: the inverse image of a compact set has all coordinate moduli bounded
above and bounded away from zero, its arguments lie in a compact torus, and
the equation $f=0$ defines a closed subset.  The restriction to the closed
critical locus is therefore proper.  A proper injective immersion is an
embedding.  Consequently, the two readily verifiable conditions
$$
d(\Log|_{S_f})_p\ne0
\quad\hbox{for every }p\in S_f
$$
and
$$
p,q\in S_f,
\quad
\Log(p)=\Log(q)
\quad\Longrightarrow\quad
p=q
$$
imply that $\mathcal C(\mathscr A_f)$ is a smooth embedded contour.  This
criterion is stronger than necessary because a finite covering of a smooth
contour arc can have several distinct lifts without making the image
singular.

\vspace{0.1cm}

\noindent For computation, the injectivity condition is expressed by a saturated
two-lift system.  Introduce two copies
$p=(z,w)$ and $q=(Z,W)$ of the critical equations.  After replacing complex
conjugates by independent real coordinates, impose
$|z|^2-|Z|^2=0$ and $|w|^2-|W|^2=0$.  The diagonal equations
$z-Z=w-W=0$, together with their real-coordinate versions, define the
automatic component $p=q$.  Saturating by the diagonal ideal removes
components supported entirely on this automatic solution.  Saturation by
$zwZW$ and, in a toric compactification, by the relevant toric irrelevant
ideals removes coordinate and boundary components which do not represent
finite points of $(\mathbb C^*)^2$.  If the resulting real off-diagonal
scheme has no solution, then distinct critical lifts never share a
logarithmic value, and the no-ramification condition proves smoothness of the
contour.

If the saturated two-lift system does have solutions, the contour need not
be singular.  At each solution one must compare the image germs.  Let
$\tau_p$ and $\tau_q$ be nonzero tangent vectors to $S_f$ at the two lifts,
and put $v_p=d\Log_p(\tau_p)$ and
$v_q=d\Log_q(\tau_q)$.  A nonzero determinant
$$
\det(v_p,v_q)\ne0
$$
establishes a transverse node.  If the determinant is zero, higher jets of
$\ell_f$ along the two critical branches must be compared.  Different
second or higher jets certify tangentially distinct germs.  Only when the
entire analytic image germs agree does the multiple lift remain compatible
with a smooth contour.  For algebraic curves, equality of germs can be
tested by eliminating the local parameters or by proving that the two
branches are related by a local deck transformation preserving $\Log$.

The assumptions concerning the toric compactification control the behavior
of the unbounded ends.  Newton nondegeneracy implies that the closure of
$V_f$ meets every toric divisor smoothly and with the multiplicities
prescribed by the corresponding side of $\Delta$.  Transversality to the
toric boundary gives regular logarithmic ends with directions normal to the
sides of $\Delta$.  These conditions prevent a singularity of the algebraic
curve from being hidden at the toric boundary.  They do not, by themselves,
prevent two finite contour branches from crossing, and they do not replace
the ramification or two-lift conditions.  If one also wants a smooth
compactified contour at infinity, one must require distinct end germs with
the same asymptotic direction either to have separated offsets or to agree
as one compactified germ.  For smoothness as a subset of the finite plane
$\mathbb R^2$, ends escaping to infinity create no additional point to
check.

%%%%%%%%%%%%%%%%%%%%%%%%%%%%%%%%%%%%%%%%%%%%%%%%%%%%%%%%%%%%%%%%%%%%%%%%%%%%%

\section{A Smoothness Criterion for the Contour of a Plane Amoeba}
 
Let $\Delta\subset\mathbb R^2$ be a two-dimensional lattice polygon and let
$f$ be a Laurent polynomial with $\Newt(f)=\Delta$.  
Assume that $V_f$ is smooth, that its toric compactification in $X_\Delta$
is Newton nondegenerate and transverse to the toric boundary, and that
$S_f$ is a smooth real one-dimensional submanifold.  The following theorem
separates smoothness of the critical locus from smoothness of its image.

\begin{theorem}% 
Let $\ell_f=\Logmap|_{S_f}:S_f\to\mathbb R^2$.  Suppose that
$d\ell_f{}_p$ has rank one for every $p\in S_f$.  Suppose moreover that, for
every $x\in\ell_f(S_f)$ and every pair
$p,q\in\ell_f^{-1}(x)$, the local image germs of $\ell_f$ at $p$ and $q$
coincide as one embedded real-analytic curve germ through $x$.  Then
$\mathcal C(\mathscr A_f)$ is a smooth embedded real-analytic curve in
$\mathbb R^2$.

Conversely, if $\mathcal C(\mathscr A_f)$ is a smooth embedded
real-analytic curve and $\ell_f$ is an immersion, then all local image germs
above every contour point coincide with the unique germ of
$\mathcal C(\mathscr A_f)$ at that point.  Thus, within the class in which
$\ell_f$ is unramified, the two stated conditions are necessary and
sufficient.
\end{theorem}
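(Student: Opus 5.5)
The plan is to prove the two directions separately. The key global input is the properness of $\ell_f$, established in the discussion preceding the theorem. Since $\Log:V_f\to\mathbb R^2$ is proper and $S_f$ is closed in $V_f$, the map $\ell_f$ is proper. Hence every fibre $\ell_f^{-1}(x)$ is compact. Because $\ell_f$ is an immersion of a one-dimensional manifold, it is locally injective, so each fibre is discrete and therefore finite, say $\{p_1,\dots,p_k\}$.

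For sufficiency, fix $x\in\ell_f(S_f)$.
\begin{enumerate}
\item By the rank-one hypothesis and the constant-rank theorem, choose small neighbourhoods $U_{p_j}$ of the $p_j$ in $S_f$ such that each $\ell_f(U_{p_j})$ is an embedded real-analytic arc through $x$.
\item By the germ-coincidence hypothesis, after shrinking the $U_{p_j}$, all these arcs agree with a single embedded analytic arc $\Gamma$ in some neighbourhood $B$ of $x$.
\item Properness then gives the key localization: there is a smaller neighbourhood $B'\subset B$ of $x$ with
$$\ell_f^{-1}(B')\subset \textstyle\bigcup_j U_{p_j}.$$
To see this, suppose not. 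Then there are points $q_n\notin\bigcup U_{p_j}$ with $\ell_f(q_n)\to x$. By properness the $q_n$ subconverge to some $q\in\ell_f^{-1}(x)$. Then $q$ equals some $p_j$, yet lies outside the open set $U_{p_j}$, which is a contradiction.
\item Consequently $\mathcal C(\mathscr A_f)\cap B'=\Gamma\cap B'$, so the contour is locally an embedded analytic arc at $x$.
\item A second consequence of properness is that $\ell_f(S_f)$ is closed. Hence the contour is locally an embedded real-analytic submanifold at each of its points, and it is a closed subset of $\mathbb R^2$.
\end{enumerate}

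For the converse, assume the contour $\mathcal C$ is a smooth embedded analytic curve and $\ell_f$ is an immersion. Take $p\in\ell_f^{-1}(x)$.
\begin{enumerate}
\item The germ $\ell_f(U_p)$ is an embedded analytic arc through $x$, by the constant-rank theorem, and it is contained in $\mathcal C$.
\item Since $\ell_f(U_p)$ is a one-dimensional connected embedded arc through $x$ inside a one-dimensional embedded curve, it is an open neighbourhood of $x$ in $\mathcal C$. This follows from invariance of domain applied in a chart of $\mathcal C$ near $x$: the composite is an injective continuous map of an interval into an interval.
\item Hence its germ at $x$ equals the germ of $\mathcal C$, and this holds for every lift $p$.
\end{enumerate}
The final sentence of the theorem is then just the conjunction of the two directions.

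The main obstacle is step 3 of the sufficiency argument. Without properness, lifts coming from far away, such as noncompact ends returning near $x$ or infinitely many accumulating branches, could add extra germs at $x$ that are not among the finitely many $U_{p_j}$. Properness of $\Log$ on $V_f$, already justified in the preceding section, is exactly what rules this out. A lesser point is that germ coincidence is a statement about image germs. The shrinking in step 2 must be done consistently for finitely many points, which is possible precisely because the fibre is finite.
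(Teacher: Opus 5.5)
Your proposal is correct and follows essentially the same route as the paper. Both arguments use the constant-rank theorem for the local arcs, germ coincidence over the fibre, and properness of $\ell_f$ to exclude branches accumulating at $x$ from outside the fibre; for the converse, both rely on the fact that an embedded arc inside the embedded contour must carry the contour's germ. Your finiteness of the fibres and the localization $\ell_f^{-1}(B')\subset\bigcup_j U_{p_j}$ make precise what the paper's sequence-and-contradiction step leaves implicit.
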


\begin{proof}
Fix $p\in S_f$ and write $x=\ell_f(p)$.  Since $S_f$ is a smooth
one-dimensional real-analytic manifold and $d\ell_f{}_p$ has rank one, the
real-analytic constant-rank theorem gives an open interval $U_p\subset S_f$
containing $p$ such that $\ell_f(U_p)$ is an embedded real-analytic arc
through $x$.  In suitable real-analytic coordinates $t$ on $U_p$ and
$(X,Y)$ near $x$, the map has the form $t\mapsto(t,0)$.  In particular, the
image has neither a cusp nor a higher-order cusp at $x$.

The preceding argument applies to every point of the fiber
$\ell_f^{-1}(x)$.  The second hypothesis says that all arcs obtained in this
way determine the same embedded germ at $x$.  Therefore, after making the
neighborhood of $x$ smaller if necessary, the union of the local images of
all critical branches meeting that neighborhood is a single embedded
real-analytic arc.  No two distinct image germs cross transversely, meet
tangentially, or form a multiple point.

We justify that no additional critical branch can accumulate at $x$ without
being represented by a point of the fiber.  The map
$\Logmap:V_f\to\mathbb R^2$ is proper.  Indeed, if $K\subset\mathbb R^2$ is
compact, then the two coordinate moduli of every point of
$\Logmap^{-1}(K)$ are bounded above and bounded away from zero.  The argument
coordinates lie in the compact torus $(S^1)^2$, and $V_f$ is closed in
$(\mathbb C^*)^2$.  Hence $\Logmap^{-1}(K)\cap V_f$ is compact.  The critical
locus $S_f$ is closed in $V_f$, so the restricted map $\ell_f$ is proper as
well.

Choose a compact closed disk $D$ centered at $x$.  Properness implies that
$\ell_f^{-1}(D)$ is compact.  If critical branches with images meeting every
neighborhood of $x$ were not accounted for by the fiber, one could choose a
sequence $p_n\in S_f$ whose images converge to $x$.  Compactness gives a
convergent subsequence $p_{n_j}\to p_\infty\in S_f$.  Continuity yields
$\ell_f(p_\infty)=x$, so $p_\infty$ belongs to the fiber and its local image
is one of the germs already considered.  This contradiction shows that the
local contour is exactly the single embedded arc supplied by the hypotheses.

Since the argument holds at every $x\in\ell_f(S_f)$, the contour is locally
a one-dimensional embedded real-analytic submanifold of $\mathbb R^2$ at
each of its points.  Hence $\mathcal C(\mathscr A_f)$ is a smooth embedded
real-analytic curve.

For the converse, assume that the contour is a smooth embedded
real-analytic curve and that $\ell_f$ is an immersion.  At a contour point
$x$, there is exactly one embedded contour germ.  For each
$p\in\ell_f^{-1}(x)$, the constant-rank theorem makes the local image of
$\ell_f$ at $p$ an embedded real-analytic arc contained in the contour.  A
one-dimensional embedded submanifold cannot contain a different
one-dimensional embedded germ through the same point.  Thus every local
image germ equals the unique contour germ at $x$.  This proves the converse
and completes the proof.
\end{proof}

\begin{corollary}% 
Under the hypotheses of the theorem, suppose that
$\ell_f=\Logmap|_{S_f}$ is an injective immersion.  Then
$\mathcal C(\mathscr A_f)$ is a smooth embedded real-analytic curve.
\end{corollary}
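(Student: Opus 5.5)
The corollary follows from the theorem once its germ-coincidence hypothesis is checked, and injectivity makes that check trivial. I will first observe that an immersion from the one-dimensional manifold $S_f$ has $d\ell_f{}_p\neq 0$ at every $p$, so $d\ell_f{}_p$ has rank one everywhere. This is exactly the first hypothesis of the theorem.

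For the second hypothesis, fix $x\in\ell_f(S_f)$. Injectivity of $\ell_f$ means the fiber $\ell_f^{-1}(x)$ is a single point $p$. Any pair $p,q\in\ell_f^{-1}(x)$ therefore satisfies $p=q$, and the two local image germs are the same germ. By the constant-rank theorem, as in the first step of the theorem's proof, this germ is an embedded real-analytic arc through $x$. So the germ-coincidence condition holds vacuously in the nontrivial case, and the theorem applies directly: $\mathcal C(\mathscr A_f)$ is a smooth embedded real-analytic curve.

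As an independent cross-check, and as the argument I would give if I wanted to avoid the germ formalism, I would use properness. The proof of the theorem shows that $\Logmap:V_f\to\mathbb R^2$ is proper and that $S_f$ is closed in $V_f$, so $\ell_f$ is proper. A proper injective continuous map into the locally compact Hausdorff space $\mathbb R^2$ is closed, hence a homeomorphism onto its image. An injective immersion that is a homeomorphism onto its image is an embedding, and since $\ell_f$ is real-analytic, the image is an embedded real-analytic submanifold.

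The only point needing care is that the theorem's local argument excludes branches accumulating at $x$ from outside the fiber. That is handled by the properness already established in the proof of the theorem, so I expect no real obstacle.
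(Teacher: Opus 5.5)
Your proposal is correct and follows the paper's proof essentially step for step. The immersion gives rank one, and injectivity makes every fiber a singleton, so the germ-compatibility hypothesis of the theorem holds automatically. The paper also gives your alternative argument: since $\ell_f$ is proper (established in the theorem's proof), a proper injective immersion is an embedding.
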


\begin{proof}
Since $\ell_f$ is an immersion, the constant-rank theorem makes every
local image an embedded real-analytic arc.  Injectivity implies that every
fiber contains one point, so the image-germ compatibility condition in the
theorem is automatic.  The theorem therefore proves the assertion.
Equivalently, properness of $\ell_f$, proved above, shows directly that a
proper injective immersion is an embedding.  Its image is the contour, so
the contour is a smooth embedded real-analytic curve.
\end{proof}

\begin{remark}% 
The injectivity hypothesis can be tested algebraically.  Introduce two
critical points $p=(z,w)$ and $q=(Z,W)$, impose the two copies of the curve
and critical equations together with $|z|^2=|Z|^2$ and
$|w|^2=|W|^2$, and saturate by the diagonal ideal defining $p=q$.  One must
also saturate by the torus-coordinate factors and by the appropriate toric
boundary ideals.  If the resulting real off-diagonal system has no solution,
then distinct critical points never have the same logarithmic value.  If the
ramification system also has no solution, the corollary applies.

If the off-diagonal system has solutions, smoothness is still possible, but
the corresponding local image germs must be compared.  Distinct tangent
lines produce a node.  Equal tangent lines with different higher jets
produce a tangency or a higher-order contact.  Multiple lifts are compatible with smoothness
only when all their complete analytic image germs coincide as the same
embedded arc.
\end{remark}

%%%%%%%%%%%%%%%%%%%%%%%%%%%%%%%%%%%%%%%%%%%%%%%%%%%%%%%%%%%%%%%%%%%%%%%%

\section{Boundary-Only Amoeba Contours Beyond the Simple Harnack Class}

A smooth curve which is not torus-equivalent to
any simple Harnack curve can satisfy
$\mathcal C(\mathscr{A}_f)=\partial\mathscr A_f$.  Such examples can be  obtained by pulling
back a smooth simple Harnack curve through a finite monomial covering of
degree greater than one.  This construction is fundamentally different from
a unitary torus translation: it increases the cardinality of every regular
logarithmic fiber, and that cardinality cannot be changed by a torus
automorphism.

Let $H=V(h)\subset(\mathbb C^*)^2$ be a smooth simple Harnack curve, and let
$A\in\operatorname{Mat}_{2\times2}(\mathbb Z)$ satisfy
$N=|\det A|>1$.  The associated monomial map is
$$
\pi_A(z,w)=
\bigl(z^{a_{11}}w^{a_{12}},z^{a_{21}}w^{a_{22}}\bigr).
$$
It is an unramified covering of $(\mathbb C^*)^2$ of degree $N$.  Put
$C=\pi_A^{-1}(H)$.  When this inverse image is connected, it is a smooth
irreducible curve.  Smoothness follows because $d\pi_A$ is invertible at
every point of the algebraic torus and $H$ is smooth.

If $L_A:\mathbb R^2\to\mathbb R^2$ denotes multiplication by $A$, then
$$
\Logmap\circ\pi_A=L_A\circ\Logmap.
$$
Since $L_A$ is an invertible real linear map, this identity implies
$$
\mathscr A_C=L_A^{-1}(\mathscr A_H)
\quad\hbox{and}\quad
\partial\mathscr A_C=L_A^{-1}(\partial\mathscr A_H).
$$
The same identity determines the critical loci.  At $p\in C$, differentiation
gives
$$
d\Logmap_{\pi_A(p)}\circ d\pi_A{}_p
=L_A\circ d\Logmap_p.
$$
Both $d\pi_A{}_p$ and $L_A$ are invertible.  Hence
$d\Logmap_p$ drops rank on $T_pC$ if and only if
$d\Logmap_{\pi_A(p)}$ drops rank on $T_{\pi_A(p)}H$.  Therefore,
$$
\Crit(\Logmap|_C)=
\pi_A^{-1}\bigl(\Crit(\Logmap|_H)\bigr)
$$
and
$$
  \mathcal C(\mathscr{A}_C)  =L_A^{-1}(   \mathcal C(\mathscr{A}_H)).
$$
Since $H$ is simple Harnack,
$ \mathcal C(\mathscr{A}_H)=\partial\mathscr A_H$.  Combining the preceding equalities
gives the exact identity
$$
 \mathcal C(\mathscr{A}_C)
=L_A^{-1}( \mathcal C(\mathscr{A}_H))
=L_A^{-1}(\partial\mathscr A_H)
=\partial\mathscr A_C.
$$
If the boundary of the amoeba of $H$ is smooth, then the boundary and contour
of $C$ are smooth as well, because $L_A^{-1}$ is a linear diffeomorphism.

The pullback curve is not torus-equivalent to a simple Harnack curve.  At a
regular interior point $y\in\mathscr A_H$, the simple Harnack property gives
exactly two points of $H$ over $y$.  If
$x=L_A^{-1}y$, every one of these two points has exactly $N$ inverse images
under $\pi_A$, all having logarithmic image $x$.  Therefore
$$
\#\bigl(\Logmap|_C\bigr)^{-1}(x)=2N>2.
$$
On the other hand, the logarithmic map of a simple Harnack curve is at most
two-to-one.  A torus automorphism has a unimodular exponent matrix and is
biholomorphic, so it preserves the generic cardinality of logarithmic
fibers, up to an invertible affine transformation of $\mathbb R^2$.
Therefore no torus automorphism can transform $C$ into a simple Harnack
curve.

There is also an explicit nontriangular and nonparallelogram family.  Consider
$$
h(u,w)=\left(u-\frac3{10}\right)w-(u+3)(u+10).
$$
Its Newton polygon is
$\Delta_0=\operatorname{conv}\{(0,0),(2,0),(1,1),(0,1)\}$, a lattice
trapezoid which is neither a triangle nor a parallelogram.  The curve has the
rational parametrization
$$
u=t,
\qquad
w=\frac{(t+3)(t+10)}{t-3/10}.
$$
The zeros $-10,-3$, the pole $3/10$, and the zero and pole of the coordinate
$u=t$ occur on $\mathbb RP^1$ in the cyclic arrangement of the standard
rational simple Harnack parametrization.  Equivalently, if $t=re^{i\theta}$,
direct differentiation gives
$$
\frac{\partial}{\partial\theta}
\log\left|\frac{(t+3)(t+10)}{t-3/10}\right|
=r\sin\theta\left(
-\frac3{|t+3|^2}-\frac{10}{|t+10|^2}
-\frac{3/10}{|t-3/10|^2}\right).
$$
The factor in parentheses is strictly negative at every regular point.
There is therefore precisely one maximum and one minimum on every regular
circle $|t|=r$.  Hence the logarithmic map
is at most two-to-one, which is the amoeba-map characterization of a rational
simple Harnack curve.  Smoothness in the torus can also be checked directly:
the simultaneous equations
$h=h_u=h_w=0$ have no solution in $(\mathbb C^*)^2$.

For an integer $m>1$, pull this curve back by
$\pi_m(z,w)=(z^m,w)$.  The resulting polynomial is
$$
f_m(z,w)=\left(z^m-\frac3{10}\right)w-(z^m+3)(z^m+10).
$$
Its Newton polygon is
$$
\Delta_m=
\operatorname{conv}\{(0,0),(2m,0),(m,1),(0,1)\},
$$
which is again neither a triangle nor a parallelogram.  The curve is
irreducible because its equation is linear in $w$ and its coefficient
$z^m-3/10$ is relatively prime to $(z^m+3)(z^m+10)$ in the Laurent
polynomial ring.  It is smooth: if $(f_m)_w=z^m-3/10=0$, then
$f_m=-(z^m+3)(z^m+10)\ne0$, while at every other point the equation solves
for $w$ as a regular rational function of $z$.

Writing $C_m=V(f_m)$ and $H=V(h)$, one obtains
$$
\mathscr A_{C_m}=
\{(x,y):(mx,y)\in\mathscr A_H\}
$$
and
$$
\mathscr C_{C_m}
=\partial\mathscr A_{C_m}.
$$
Nevertheless, a generic interior logarithmic fiber of $C_m$ has $2m$
points.  Hence $C_m$ is not simple Harnack and is not torus-equivalent to any
simple Harnack curve.

This example also clarifies what the equality
$\mathscr C_f=\partial\mathscr A_f$ does and does not imply.  It says that
the logarithmic map has no critical value strictly inside the amoeba.  It
does not say that a regular interior point has only two inverse images.  A
finite monomial covering creates several unramified sheets above the same
amoeba while introducing no new critical values.  Thus boundary-only
critical values are weaker than the simple Harnack condition.

For a prescribed polygon $\Delta$, the construction applies whenever
$\Delta=A^T\Delta_0$ for a lattice polygon $\Delta_0$ and an integer matrix
$A$ with $|\det A|>1$, and one can choose the Harnack polynomial so that its
monomial pullback is connected.  Such a polygon has a nontrivial lattice
covering structure.  If $\Delta$ is lattice-primitive, meaning that no such
factorization exists, this construction does not apply.  The equality
$\mathscr C_f=\partial\mathscr A_f$ for a non-Harnack curve with that fixed
primitive polygon then becomes a separate classification problem and cannot
be decided from the shape of $\Delta$ alone.

The conclusion is therefore unambiguous: smooth curves not torus-equivalent
to simple Harnack curves can have their complete amoeba contour equal to
their amoeba boundary.  The family $f_m$ above provides explicit examples,
and its generic logarithmic fiber cardinality $2m$ certifies that these
examples lie outside the simple Harnack class even modulo torus
automorphisms.

%%%%%%%%%%%%%%%%%%%%%%%%%%%%%%%%%%%%%%%%%%%%%%%%%%%%%%%%%%%%%%%%%%%%%%%%

\section{A Trapezoidal Example with Contour Equal to the Amoeba Boundary}
 
\begin{theorem}
Let
$f(z,w)=(z+1)w+(z-1)(z-2)=zw+w+z^2-3z+2$.
Then $V_f=\{f=0\}\subset(\C^*)^2$ is smooth and Newton nondegenerate, its Newton polygon is neither a triangle nor a parallelogram, and
$\mathcal C(\mathscr A_f)=\partial\mathscr A_f$.
Moreover, $V_f$ is not obtained by pulling back any plane curve, and hence not by pulling back a smooth simple Harnack curve, through a finite monomial covering of degree greater than one.
\end{theorem}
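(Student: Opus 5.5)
The plan is to treat $V_f$ as a rational curve, exactly as in the trapezoid example $h$ of the previous section. The equation is linear in $w$, so $V_f$ has the rational parametrization
$$
z=t,\qquad w=-\frac{(t-1)(t-2)}{t+1}.
$$
The Newton polygon is $\operatorname{conv}\{(0,0),(2,0),(1,1),(0,1)\}$, the same trapezoid $\Delta_0$ as before. It is neither a triangle nor a parallelogram.

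\textbf{Smoothness and nondegeneracy.} Smoothness in the torus follows as for $f_m$. If $f_w=z+1=0$, then $z=-1$ and $f=(-2)(-3)=6\ne0$. At every other point the equation solves for $w$ as a regular function of $z$, so $\partial f/\partial w\neq 0$ gives smoothness. Newton nondegeneracy is checked edge by edge through the truncations:
\begin{itemize}
\item bottom edge: $z^2-3z+2$, with distinct roots $1,2$;
\item vertical edge: $w+2$;
\item top edge: $zw+w$;
\item slanted edge: $zw+z^2$, that is, $z(w+z)$.
\end{itemize}
Each truncation is squarefree in the torus. This is routine.

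\textbf{Contour equals boundary.} For this I will show that $V_f$ is a rational simple Harnack curve, up to the torus automorphism $w\mapsto -w$, which changes neither the amoeba nor the contour. Then Mikhalkin's theorem gives $\mathcal C(\mathscr A_f)=\partial\mathscr A_f$.

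Concretely, the zeros and poles of $z$ and of $-w$ on $\mathbb{RP}^1$ are:
\begin{itemize}
\item for $z$: zero at $0$, pole at $\infty$;
\item for $-w$: zeros at $1,2$, poles at $-1,\infty$.
\end{itemize}
Their cyclic order on $\mathbb{RP}^1$ is $-1,0,1,2,\infty$. This is the order given by the boundary lattice points traversed around $\Delta_0$ (slanted edge, bottom edge with two points, vertical edge, top edge), which is the required cyclic condition.

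As a direct check mirroring the earlier computation, set $t=re^{i\theta}$ and compute
$$
\frac{\partial}{\partial\theta}\log|w|=r\sin\theta\left(\frac{1}{|t-1|^2}+\frac{2}{|t-2|^2}-\frac{1}{|t+1|^2}\right).
$$
I expect this step to be the main obstacle: the bracket is not of one sign unless one is careful. I would try first to clear denominators and show positivity for $r>0$ by an explicit algebraic estimate, for instance that $|t+1|^2\ge|t-1|^2$ when $\operatorname{Re}t\ge0$, and handle $\operatorname{Re}t<0$ separately. If the sign fails, I would fall back on the cyclic-order criterion alone. The outcome is that each circle $|t|=r$ has one max and one min of $\log|w|$, so $\Log$ is at most two-to-one. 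In either case the Harnack property gives the equality.

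\textbf{Not a monomial pullback.} Suppose $f$, up to a unit, equals $g\circ\pi_A$ with $|\det A|>1$. Then every exponent difference of $f$ lies in the sublattice $A^T\mathbb Z^2$. But $\operatorname{supp}f=\{(1,1),(0,1),(2,0),(1,0),(0,0)\}$ contains $(1,0)-(0,0)=(1,0)$ and $(0,1)-(0,0)=(0,1)$, which generate $\mathbb Z^2$. Hence $A^T\mathbb Z^2=\mathbb Z^2$ and $|\det A|=1$, a contradiction. In particular $V_f$ is not the pullback of any smooth simple Harnack curve, and this part is purely combinatorial.
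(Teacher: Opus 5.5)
Your plan follows the paper's structure: show that $V_f$ is a smooth simple Harnack curve, invoke Mikhalkin to get $\mathcal C(\mathscr A_f)=\partial\mathscr A_f$, and exclude monomial pullbacks using the full difference lattice. However, your angular derivative has a sign error, and that error is what creates your ``main obstacle''. For real $a$ one has $\partial_\theta\log|re^{i\theta}-a|=ar\sin\theta/|re^{i\theta}-a|^2$. So the term $-\log|t+1|$ (the pole at $a=-1$) contributes $+r\sin\theta/|t+1|^2$, and
$$
\frac{\partial}{\partial\theta}\log|w|
=r\sin\theta\left(\frac{1}{|t-1|^2}+\frac{2}{|t-2|^2}+\frac{1}{|t+1|^2}\right).
$$
The bracket is a sum of positive terms. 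The bracket you wrote is genuinely negative near $t=-1$ off the real axis, so your planned estimate could not have succeeded; with the correct sign, nothing needs to be estimated. Since $\partial_\theta\log|w|=-\operatorname{Im}(zw'/w)$, the corrected formula is exactly the paper's identity for $\operatorname{Im}R(x+iy)$. The paper uses that identity to show the logarithmic Gauss map is totally real.

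You differ from the paper in how the Harnack property is certified, and your fallback is sound. The points $t=-1,0,1,2,\infty$ map to the top, vertical, bottom, bottom and slanted sides, which is the cyclic order of the sides of $\Delta_0$. For a real rational curve whose toric boundary points are all real and distinct, this is exactly the maximal-position condition in Mikhalkin's definition. It is therefore a more combinatorial certificate than the paper's route through total reality and the Mikhalkin--Lang characterization.

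The corrected angular formula gives you more. On each regular circle $|z|=r$, $\log|w|$ has exactly one maximum ($\theta=\pi$) and one minimum ($\theta=0$). Vertical slices of the amoeba are then intervals whose endpoints are exactly the critical values. This yields $\mathcal C(\mathscr A_f)=\partial\mathscr A_f$ without citing Mikhalkin, as in the paper's slice argument for $(z+1)w^2=(z-1)(z-2)(z-3)(z-4)$. You would need to handle the radii $r=1,2$, where the circle meets a zero or the pole of $w$, by one-sided limits.

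Finally, in the pullback part you assume a polynomial factorization $f=\mathrm{unit}\cdot g\circ\pi_A$, but the statement concerns zero sets. Add the paper's step: $f$ is irreducible (linear in $w$ with coprime coefficients $z+1$ and $(z-1)(z-2)$). Also, $g\circ\pi_A$ is reduced when $g$ is, because $\pi_A$ is unramified. Hence $V_f=\pi_A^{-1}(V_g)$ forces such a factorization.
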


\begin{proof}
The exponent set of $f$ is
$A_f=\supp(f)=\{(0,0),(1,0),(2,0),(0,1),(1,1)\}$.
Its Newton polygon is therefore
$\Delta=\operatorname{conv}\{(0,0),(2,0),(1,1),(0,1)\}$.
This polygon has four vertices.  Its bottom and top edges are parallel, whereas its other two edges are not parallel, so $\Delta$ is a trapezoid but not a parallelogram.  In particular, it is neither a triangle nor a parallelogram.

The equation can be solved globally for $w$ on the curve:
$w=-\dfrac{(z-1)(z-2)}{z+1}$.
The value $z=-1$ does not occur on $V_f$, because substitution in $f$ gives $f(-1,w)=6$.  Since $f_w=z+1$ is nonzero at every point of $V_f$, the affine curve in the algebraic torus is smooth.

Newton nondegeneracy can be verified face by face.  The bottom-edge polynomial is $(z-1)(z-2)$ and has the two distinct roots $1$ and $2$.  The top-edge polynomial is $w(z+1)$ and its nonmonomial factor has the simple root $-1$.  The left-edge polynomial is $w+2$ and has the simple root $-2$.  The diagonal-edge polynomial is $z^2+zw=z(z+w)$ and its nonmonomial factor has a simple torus root.  Thus every face truncation defines a smooth reduced hypersurface in the corresponding one-dimensional algebraic torus.  Together with the smoothness already proved in $(\C^*)^2$, this shows that $f$ is Newton nondegenerate and that the closure of $V_f$ in the toric surface $X_\Delta$ is a smooth real algebraic curve.

We next determine the logarithmic critical locus exactly.  Use $z$ as a complex parameter and write
$w(z)=-\dfrac{(z-1)(z-2)}{z+1}$.
Away from the zeros and poles of this rational function, the differential of the logarithmic map is described by the two complex logarithmic derivatives $1/z$ and $w'(z)/w(z)$.  Its real rank is smaller than two exactly when their quotient is real.  Hence the criticality condition is
$R(z)=z\dfrac{w'(z)}{w(z)}\in\R\cup\{\infty\}$.
Logarithmic differentiation gives
$R(z)=z\left(\dfrac{1}{z-1}+\dfrac{1}{z-2}-\dfrac{1}{z+1}\right)$.
Using $z/(z-a)=1+a/(z-a)$, this becomes
$R(z)=1+\dfrac{1}{z-1}+\dfrac{2}{z-2}+\dfrac{1}{z+1}$.

Let $z=x+iy$.  Since $\operatorname{Im}(1/(z-a))=-y/((x-a)^2+y^2)$ for real $a$, one obtains
$$
\operatorname{Im}R(x+iy)
=-y\left(
\frac{1}{(x-1)^2+y^2}
+\frac{2}{(x-2)^2+y^2}
+\frac{1}{(x+1)^2+y^2}
\right).
$$
The expression in parentheses is strictly positive wherever it is defined.  It follows that $\operatorname{Im}R(z)=0$ if and only if $\operatorname{Im}z=0$.  The exceptional zeros and poles $z=1$, $z=2$, and $z=-1$ are also real points of the toric compactification.  Consequently the logarithmic Gauss map is totally real: the inverse image of $\mathbb P^1_{\R}$ is precisely the real part of the compactified curve.

The parametrization $z\mapsto(z,w(z))$ is an embedding on the algebraic torus because its first coordinate is $z$.  Newton nondegeneracy shows that this embedding extends without singularity to the smooth toric compactification.  The total-reality characterization of Mikhalkin, in the form recalled by Lang, therefore implies that this compactified curve is a smooth simple Harnack curve \cite{Mikhalkin2000,Lang2024}.  One of the defining geometric consequences is that the logarithmic map is at most two-to-one and that its critical image is exactly the topological boundary of its image.  Hence
$\mathcal C(\mathscr A_f)=\partial\mathscr A_f$.

For completeness, the last conclusion can also be understood locally without appealing only to terminology.  Total reality says that every logarithmic critical point belongs to the real curve.  On every component of the complement of the real critical locus, complex conjugation exchanges the two local sheets of the logarithmic map.  The simple Harnack property orders the real arcs cyclically along the toric boundary and prevents a real critical arc from entering the interior of the amoeba.  Thus the real critical image forms the complete boundary, while the nonreal conjugate sheets map to its interior.  This gives both inclusions between the contour and the amoeba boundary.

It remains to prove that this example cannot arise from a nontrivial monomial covering.  The difference lattice of the support is
$\Lambda_f=\langle a-a':a,a'\in A_f\rangle_{\Z}$.
Because $(1,0)=(1,0)-(0,0)$ and $(0,1)=(0,1)-(0,0)$ occur among the support differences, one has $\Lambda_f=\Z^2$.

Let $M\in\operatorname{Mat}_{2\times2}(\Z)$ with $\det M\ne0$, and define
$\Phi_M(z,w)=(z^{m_{11}}w^{m_{12}},z^{m_{21}}w^{m_{22}})$.
This monomial map is a finite covering of degree $|\det M|$.  If, up to multiplication by a Laurent monomial, $f$ were a pullback $h\circ\Phi_M$, then every difference of two exponents of $f$ would belong to $M^T\Z^2$.  Hence $\Lambda_f\subseteq M^T\Z^2$.  When $|\det M|>1$, the lattice $M^T\Z^2$ has index $|\det M|$ in $\Z^2$ and is a proper sublattice.  This contradicts $\Lambda_f=\Z^2$.

The same conclusion holds if the pullback assertion is stated only for zero sets.  The polynomial $f$ is irreducible in the Laurent polynomial ring: it has degree one in $w$, and the relatively prime polynomials $z+1$ and $(z-1)(z-2)$ are its two coefficients.  Two reduced irreducible hypersurfaces in $(\C^*)^2$ with the same zero set have defining Laurent polynomials that differ by a nonzero constant and a Laurent monomial.  Therefore equality $V_f=\Phi_M^{-1}(V_h)$ would yield the factorization already excluded by the full difference lattice.

Equivalently, the inverse image under $\Phi_M$ is invariant under its kernel, a finite subgroup of order $|\det M|$.  If a torus translation $(z,w)\mapsto(\xi z,\eta w)$ preserved $V_f$, irreducibility would force
$f(\xi z,\eta w)=c z^uw^v f(z,w)$
for a nonzero constant $c$ and integers $u,v$.  Equality of the finite supports forces $u=v=0$.  Comparing the unique $z^2$ term gives $c=\xi^2$, comparing the nonzero constant gives $c=1$, comparing the $z$ term gives $\xi=1$, and comparing the $w$ term gives $\eta=1$.  Thus the translation stabilizer is trivial, whereas a pullback through a covering of degree greater than one would have a nontrivial stabilizer.  This completes the proof.
\end{proof}

\begin{remark}
This example meets the additional polygon requirement and has full exponent lattice.  It is itself a smooth simple Harnack curve, but it is not a pullback through a covering of degree greater than one.  Therefore it answers the present question exactly.  If one additionally requires the curve not to be torus-equivalent to any simple Harnack curve, that is a stronger problem and is not settled by this construction.
\end{remark}

%%%%%%%%%%%%%%%%%%%%%%%%%%%%%%%%%%%%%%%%%%%%%%%%%%%%%%%%%%%%%%%%%%%%%%%%%

\section{Smooth Boundary-Only Contours and Non-Harnack Curves}

Let $\Delta\subset\mathbb R^2$ be a fixed two-dimensional lattice polygon.
It is more precise to write $\Delta\subset\mathbb R^2$ with vertices in
$\mathbb Z^2$, rather than $\Delta\subset\mathbb Z^2$, because a polygon is
a two-dimensional convex subset of the real plane.  Let
$\mathcal S_\Delta$ be the set of Laurent polynomials $f$ in two variables
such that $\Newt(f)=\Delta$.  For $f\in\mathcal S_\Delta$, put
$V_f=\{f=0\}\subset(\mathbb C^*)^2$. The amoeba and its contour are denoted as before
$\mathscr A_f=\Logmap(V_f)$ and
$\mathcal C(\mathscr A_f)=\Logmap(\Crit(\Logmap|_{V_f}))$.

There are two meanings of the assertion that $V_f$ is not Harnack.  In the
literal meaning, $V_f$ is not a simple Harnack curve for the standard real
structure $(z,w)\mapsto(\overline z,\overline w)$.  In the stronger meaning,
$V_f$ is not equivalent to any simple Harnack curve under a complex torus
translation or a monomial torus automorphism.  The answer to the question is
affirmative for every $\Delta$ in the first meaning.  In the stronger
meaning, it is affirmative for many polygons, but it is not a consequence of
the polygon alone and requires a separate construction.

\begin{theorem}
For every two-dimensional lattice polygon $\Delta$,   except unimodular triangles, there exists a Laurent
polynomial $f\in\mathcal S_\Delta$ such that $V_f$ is smooth and Newton
nondegenerate with respect to $\Delta$, its logarithmic critical locus is
smooth, its contour is a smooth embedded real-analytic curve, and
$$
\mathcal C(\mathscr A_f)=\partial\mathscr A_f.
$$
If the polygon is not unimodular triangle, the polynomial $f$ can be chosen nonreal up to multiplication by a nonzero
constant.  Therefore, $V_f$ is not a Harnack curve with respect to the
standard real structure.
\end{theorem}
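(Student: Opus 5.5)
The plan is to begin with a smooth simple Harnack curve of Newton polygon $\Delta$ and then apply a nonreal torus translation. By Mikhalkin's existence theorem \cite{Mikhalkin2000}, every lattice polygon $\Delta$ supports a real polynomial $h\in\mathcal S_\Delta$ whose curve $V_h$ is a smooth simple Harnack curve. For example, one can take a patchworking polynomial with Harnack signs on a primitive triangulation, or a rational curve parametrized as in the trapezoid example above, with the real zeros and poles in cyclic order. Such an $h$ is Newton nondegenerate and its closure is transverse to the toric boundary. By total reality of the logarithmic Gauss map \cite{Lang2024}, the critical locus $S_h$ equals the real part $\R V_h$, which is a disjoint union of smooth circles in $\overline{V_h}$ with the toric points removed. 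Moreover $\Log|_{\R V_h}$ is an immersion, and it is injective up to the two-to-one folding of the interior. Concretely, the boundary of the amoeba of a simple Harnack curve is smooth: each oval maps diffeomorphically onto the boundary of a compact complement component, and each noncompact arc maps to a smooth unbounded boundary arc. Distinct real arcs have disjoint images because Harnack amoeba boundaries are embedded. Hence the Corollary to the smoothness criterion applies, and $\mathcal C(\mathscr A_h)=\partial\mathscr A_h$ is a smooth embedded real-analytic curve.

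Next, set $f(z,w)=h(\xi z,\eta w)$ with $|\xi|=|\eta|=1$. Since $T(z,w)=(\xi^{-1}z,\eta^{-1}w)$ is a biholomorphism of $(\C^*)^2$ with $\Log\circ T=\Log$, it maps $V_h$ onto $V_f$. It follows that $\mathscr A_f=\mathscr A_h$, $S_f=T(S_h)$ and $\mathcal C(\mathscr A_f)=\mathcal C(\mathscr A_h)$. Smoothness, Newton nondegeneracy (edge truncations are only rescaled in their variables), smoothness of the critical locus and the contour identity therefore all transfer verbatim. So everything reduces to choosing $(\xi,\eta)$ such that $f$ is not real up to a constant.

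The coefficients of $f$ are $c_\alpha\xi^{\alpha_1}\eta^{\alpha_2}$. The polynomial $f$ is real up to a constant $\lambda$ exactly when all ratios $\xi^{\alpha_1-\beta_1}\eta^{\alpha_2-\beta_2}$, for $\alpha,\beta\in\supp h$, are real. This happens iff the character $\alpha\mapsto\xi^{\alpha_1}\eta^{\alpha_2}$ takes values in $\{\pm1\}$ on the difference lattice $\Lambda$ of the support. Choosing $h$ with full support $\Delta\cap\Z^2$, the set of such $(\xi,\eta)$ is a finite subgroup of the torus of order $[\Z^2:2\Lambda]\le$ const. Any $(\xi,\eta)$ outside this finite set gives a nonreal $f$. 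Here the unimodular triangle must be excluded: then $\Lambda$ is generated by the two edge vectors, and a torus translation can always be absorbed by rescaling (the three coefficients can be simultaneously made real, since there are three monomials and two torus parameters plus a constant). So every $f$ with this support is real up to torus equivalence, and in fact the phases of three monomials can always be normalized. In that case any $f\in\mathcal S_\Delta$ is, up to a constant, a real line after coordinate scaling. The statement nevertheless only asks about the standard real structure and the literal equation. For non-unimodular-triangle polygons there are at least four lattice points, so I would additionally check that nonreality persists. It does, because four or more monomials carry phases that two torus parameters and one constant cannot all kill. Finally, a curve defined by a polynomial not real up to constants is not invariant under standard conjugation (irreducibility gives uniqueness of the equation up to constants). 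So $V_f$ is not a real curve, hence not Harnack for the standard real structure.

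The main obstacle I expect is the first step: certifying, uniformly over all $\Delta$, that the Harnack contour is genuinely smooth and embedded. This requires that $\Log|_{\R V_h}$ have no ramification points (inflection-type points of the real curve in logarithmic coordinates would produce cusps) and no double points. I would handle this through Mikhalkin's theorem that for simple Harnack curves $\Log|_{\R V_h}$ is an embedding onto $\partial\mathscr A_h$. Nonvanishing of its differential then follows from the amoeba boundary being locally strictly convex, which comes from the nondegenerate Hessian of the Ronkin function and the total reality of the logarithmic Gauss map (no logarithmic inflections).
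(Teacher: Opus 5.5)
Your proposal is correct and is essentially the paper's proof. You take a smooth simple Harnack curve $V_h$ with Newton polygon $\Delta$, for which total reality gives $S_h=\R V_h$ and $\mathcal C(\mathscr A_h)=\partial\mathscr A_h$ smooth and embedded. You transport everything by a unitary torus translation, which commutes with $\Log$. You then choose the phases off the finite exceptional set where all coefficient ratios become real; the paper phrases this as avoiding a countable union of proper affine subsets of $(\alpha,\beta)$.

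Two side remarks should be corrected, though neither is load-bearing:

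\begin{itemize}
\item Immersivity of $\Log|_{\R V_h}$ is immediate, with no Ronkin-function argument, because $\Log$ restricted to each open orthant of $(\R^*)^2$ is a diffeomorphism onto $\R^2$. Only injectivity needs Mikhalkin's theorem.
\item Your claim that four or more monomials carry phases that ``cannot all be killed'' is false for your $f$, which is by construction a torus translate of the real polynomial $h$. The statement only asserts literal nonreality, and your finite-subgroup argument already gives that, even for unimodular triangles. So the excluded case is never actually used.
\end{itemize}
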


\begin{proof}
For every two-dimensional lattice polygon $\Delta$, there exists a smooth
simple Harnack curve in the toric surface $X_\Delta$.  It can be defined in
the algebraic torus by a real Laurent polynomial
$h(z,w)=\sum_{(j,k)\in A}a_{jk}z^jw^k$, where
$A\subset\Delta\cap\mathbb Z^2$, all $a_{jk}$ are real,
$\operatorname{conv}(A)=\Delta$, and the toric compactification is smooth,
Newton nondegenerate, and transverse to the toric boundary.  Such curves can
be obtained by simple Harnack patchworking.

For a smooth simple Harnack curve, the logarithmic Gauss map
$\gamma_h=[zh_z:wh_w]$ is totally real.  Therefore its inverse image of
$\mathbb P^1_{\mathbb R}$ is precisely the real part of the curve.  By the
logarithmic critical-point criterion, this inverse image is the complete
critical locus of the logarithmic map.  The logarithmic map realizes the
quotient of the complex curve by complex conjugation and is at most
two-to-one.  Its real locus maps injectively onto the amoeba boundary.  Hence
$$
\mathcal C(\mathscr A_h)=\partial\mathscr A_h.
$$
Choosing the simple Harnack curve smooth and transverse to the toric boundary
ensures that this common set is a smooth embedded real-analytic curve in the
finite logarithmic plane.

Choose real numbers $\alpha$ and $\beta$ and consider the unitary torus
translation
$T_{\alpha,\beta}(z,w)=(e^{i\alpha}z,e^{i\beta}w)$.  Define
$$
f_{\alpha,\beta}(z,w)
=h(e^{-i\alpha}z,e^{-i\beta}w)
=\sum_{(j,k)\in A}
a_{jk}e^{-i(j\alpha+k\beta)}z^jw^k.
$$
No coefficient vanishes and no exponent changes, so
$\Newt(f_{\alpha,\beta})=\Newt(h)=\Delta$.  Moreover,
$V_{f_{\alpha,\beta}}=T_{\alpha,\beta}(V_h)$.  The map
$T_{\alpha,\beta}$ is an algebraic automorphism of the complex torus and
extends compatibly to the toric compactification.  It consequently preserves
smoothness, Newton nondegeneracy, and transversality to the toric boundary.

Since $|e^{i\alpha}z|=|z|$ and $|e^{i\beta}w|=|w|$, one has the exact
identity $\Logmap\circ T_{\alpha,\beta}=\Logmap$.  It follows immediately
that
$$
\mathscr A_{f_{\alpha,\beta}}=\mathscr A_h.
$$
Differentiating the same identity and using the invertibility of
$dT_{\alpha,\beta}$ shows that a point $p\in V_h$ is logarithmically critical
if and only if $T_{\alpha,\beta}(p)$ is logarithmically critical on
$V_{f_{\alpha,\beta}}$.  Thus
$S_{f_{\alpha,\beta}}=T_{\alpha,
\beta}(S_h)$ and
$$
\mathcal C(\mathscr A_{f_{\alpha,\beta}})
=\mathcal C(\mathscr A_h)
=\partial\mathscr A_h
=\partial\mathscr A_{f_{\alpha,\beta}}.
$$
The contour and the critical locus remain smooth because they are transported
by diffeomorphisms, and the contour itself is unchanged as a subset of
$\mathbb R^2$.

It remains to choose the phases so that the polynomial is not real up to a
common scalar.  If a number $\lambda\in\mathbb C^*$ made every coefficient
of $\lambda f_{\alpha,\beta}$ real, then for every two exponents
$(j,k),(j',k')\in A$ one would have
$$
(j-j')\alpha+(k-k')\beta\in\pi\mathbb Z.
$$
Since $A$ affinely spans a two-dimensional polygon, its exponent
differences span a rank-two subgroup of $\mathbb Z^2$.  The displayed
congruences restrict $(\alpha,\beta)$ to a countable union of proper affine
subsets of $\mathbb R^2$.  Choose $(\alpha,\beta)$ outside that union.  Then
$f_{\alpha,\beta}$ is not real up to a scalar, so its curve is not a simple
Harnack curve for the standard real structure.  All the other asserted
properties have already been proved.
\end{proof}

The theorem is not only a formal change of
coefficients: the resulting defining polynomial is genuinely nonreal in
those coordinates.  Nevertheless, the curve is a unitary translate of a
simple Harnack curve.  This qualification matters if Harnack curves are being
classified modulo torus transformations.

\begin{proposition}
There exist smooth curves which are not torus-equivalent to any simple
Harnack curve and nevertheless have a smooth contour equal to the amoeba
boundary.  One explicit family is
$$
f_m(z,w)=
\left(z^m-\frac3{10}\right)w-(z^m+3)(z^m+10),
\qquad m>1.
$$
Its Newton polygon is
$\Delta_m=\operatorname{conv}\{(0,0),(2m,0),(m,1),(0,1)\}$.
\end{proposition}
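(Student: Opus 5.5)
The plan is to realize $f_m$ as the pullback $f_m=h\circ\pi_m$, where $h(u,w)=(u-\frac3{10})w-(u+3)(u+10)$ is the trapezoidal curve from the section on boundary-only contours and $\pi_m(z,w)=(z^m,w)$ has exponent matrix $A=\operatorname{diag}(m,1)$ with $|\det A|=m>1$. The proof then splits into four steps: (i) $H=V(h)$ is a smooth simple Harnack curve with smooth contour equal to its amoeba boundary; (ii) $C_m=V(f_m)$ is smooth, irreducible and Newton nondegenerate with polygon $\Delta_m$; (iii) the contour and boundary identities transfer through $L_A$; (iv) fiber cardinality rules out torus equivalence to a simple Harnack curve.

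For (i), I would use the rational parametrization $u=t$, $w=(t+3)(t+10)/(t-3/10)$. The computation of $\partial_\theta\log|w|$ on circles $|t|=r$ gives a strictly negative factor times $r\sin\theta$. So, just as in the proof of the trapezoidal theorem, the function $\operatorname{Im}\bigl(t\,w'/w\bigr)$ vanishes only on $\operatorname{Im}t=0$. This makes the logarithmic Gauss map totally real, and the Mikhalkin/Lang characterization then makes $H$ simple Harnack. Newton nondegeneracy of $h$ follows by checking the edge truncations. On the bottom edge, $-(u+3)(u+10)$ has the distinct roots $-3,-10$; the top edge $w(u-3/10)$ is similar; the left edge is $-\frac{3}{10}w-30$; and the slanted edge gives $uw-u^2=u(w-u)$. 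With transversality to the toric boundary, the contour $\partial\mathscr A_H$ is a smooth embedded real-analytic curve.

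For (ii) and (iii), I would argue as follows. Smoothness of $C_m$ holds because $(f_m)_w=z^m-3/10$, and wherever this vanishes one has $f_m\neq0$. Irreducibility holds because $f_m$ is linear in $w$ with coprime coefficients, so $C_m=\pi_m^{-1}(H)$ is connected. The Newton polygon is read off from the support. Nondegeneracy follows because each edge truncation of $f_m$ is the edge truncation of $h$ composed with $z\mapsto z^m$, which has simple roots in $\C^*$ since $\pi_m$ is étale on the torus. Then I would apply the identity $\Logmap\circ\pi_m=L_A\circ\Logmap$ together with the critical-locus computation from the monomial covering discussion. These give $S_{f_m}=\pi_m^{-1}(S_h)$, $\mathcal C(\mathscr A_{f_m})=L_A^{-1}(\partial\mathscr A_H)=\partial\mathscr A_{f_m}$, and smoothness of this set, since $L_A^{-1}$ is a linear diffeomorphism.

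For (iv), I would take a regular interior point $y\in\mathscr A_H$. It has exactly two preimages on $H$, each with $m$ preimages under $\pi_m$, so a generic interior fiber of $\Logmap|_{C_m}$ has $2m>2$ points. A torus automorphism or translation $T$ satisfies $\Logmap\circ T=L\circ\Logmap+b$ with $L$ invertible, and it carries regular points to regular points, so it preserves the generic fiber cardinality. Simple Harnack curves have generic fiber cardinality at most $2$, which gives the contradiction. I expect the main obstacle to be making this last invariance airtight. One has to ensure the comparison is at generic regular values on both sides, i.e. that $T$ maps the complement of the contour to the complement of the contour, and one has to use the at-most-two-to-one property of simple Harnack curves in the form given by Mikhalkin. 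The step that certifies $H$ is simple Harnack relies on the cited total-reality criterion rather than a self-contained argument.
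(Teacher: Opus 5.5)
Your proposal is correct and follows the paper's proof. You write $V_{f_m}=\pi_m^{-1}(V_h)$ with $\pi_m(z,w)=(z^m,w)$, transport amoeba and contour through the linear map $(x,y)\mapsto(mx,y)$, and exclude torus equivalence because a generic interior logarithmic fiber has $2m>2$ points. The only minor difference is in your step (i): the paper reads off $\mathcal C(\mathscr A_h)=\partial\mathscr A_h$, its smoothness, and the two-point generic fibers of $H$ directly from the one-maximum/one-minimum structure of $\theta\mapsto\log|w(re^{i\theta})|$ on each circle, so the detour through total reality and the Mikhalkin--Lang characterization that you flag as the weak point is not needed.
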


\begin{proof}
Consider first
$$
h(u,w)=\left(u-\frac3{10}\right)w-(u+3)(u+10).
$$
The curve $H=V_h$ is parametrized by
$u=t$ and $w=(t+3)(t+10)/(t-3/10)$.  Put $t=re^{i\theta}$.  Direct
differentiation gives
$$
\frac{\partial}{\partial\theta}
\log\left|\frac{(t+3)(t+10)}{t-3/10}\right|
=r\sin\theta\left(
-\frac3{|t+3|^2}-\frac{10}{|t+10|^2}
-\frac{3/10}{|t-3/10|^2}
\right).
$$
The expression in parentheses is strictly negative.  Therefore, on every
regular circle $|t|=r$, the second logarithmic coordinate has exactly one
maximum and one minimum.  Every horizontal amoeba slice is the interval
between these two critical values.  Consequently, the complete contour of
$H$ is smooth and equals $\partial\mathscr A_h$.  This is also the standard
amoeba-map characterization of a rational simple Harnack curve.

Let $\pi_m(z,w)=(z^m,w)$ and observe that
$V_{f_m}=\pi_m^{-1}(H)$.  The monomial map is an unramified covering of the
torus of degree $m$, and
$$
\Logmap\circ\pi_m(x,y)=(m x,y)
$$
in logarithmic coordinates.  Hence the amoeba and contour of $V_{f_m}$ are
the inverse images of those of $H$ under the invertible real linear map
$(x,y)\mapsto(mx,y)$.  It follows that
$$
\mathcal C(\mathscr A_{f_m})=\partial\mathscr A_{f_m},
$$
and this common curve is smooth.

The polynomial $f_m$ is linear in $w$, and the two coefficients
$z^m-3/10$ and $(z^m+3)(z^m+10)$ are relatively prime.  Thus the curve is
irreducible.  It is smooth in the torus because, if
$(f_m)_w=z^m-3/10=0$, then $f_m=-(z^m+3)(z^m+10)\ne0$; otherwise the
equation solves regularly for $w$.

At a generic interior point of the amoeba of $H$, the logarithmic fiber has
two points.  Each has $m$ inverse images under $\pi_m$, all with the same
logarithmic image.  Therefore a generic interior logarithmic fiber of
$V_{f_m}$ contains $2m>2$ points.  A simple Harnack curve has logarithmic map
at most two-to-one.  Torus translations and monomial torus automorphisms
preserve generic logarithmic fiber cardinality.  Consequently, $V_{f_m}$ is
not torus-equivalent to any simple Harnack curve.
\end{proof}

The proposition applies only when the prescribed polygon has the indicated
form, or more generally when it is the image of a Harnack Newton polygon
under an integer matrix of determinant greater than one.  For an arbitrary
fixed lattice polygon $\Delta$, it does not follow directly from the
combinatorics of $\Delta$ that there is a curve outside the entire
torus-equivalence class of simple Harnack curves with
$\mathcal C(\mathscr A_f)=\partial\mathscr A_f$.  That stronger problem
requires analyzing the coefficient space for the chosen polygon.

%%%%%%%%%%%%%%%%%%%%%%%%%%%%%%%%%%%%%%%%%%%%%%%%%%%%%%%%%%%%%%%%%%%%%%%%%%

The following example is a Harnack curve.
Let $\Delta=\operatorname{conv}\{(0,0),(4,0),(1,2),(0,2)\}$ and put
$t=3/10$.  The curve used in the balanced figure is the Harnack polynomial
$$
\begin{aligned}
h_t(z,w)={}&1+t z+t^4z^2+t^9z^3+t^{16}z^4\\
&+\left(t-2t^2z+\frac{t^5}{2}z^2\right)w+(t^4+t^5z)w^2.
\end{aligned}
$$
Its signs are the Harnack signs $(-1)^{ij}$ and its coefficient valuations are
$\lambda(i,j)=i^2+j^2$.  The lower convex hull of the lifted lattice points
induces a regular unimodular triangulation of $\Delta$ consisting of ten
triangles.  Thus this is the standard Harnack patchworking construction.
The positive factors $2$ and $1/2$ on the two interior coefficients keep the
curve in the same Harnack chamber while balancing the two bounded
amoeba-complement components corresponding to $(1,1)$ and $(2,1)$.

For an expression with rational coefficients one may write
$$
\begin{aligned}
h_{3/10}(z,w)={}&1+\frac{3z}{10}+\frac{81z^2}{10000}
+\frac{19683z^3}{10^9}+\frac{43046721z^4}{10^{16}}\\
&+\left(\frac3{10}-\frac{9z}{50}+\frac{243z^2}{200000}\right)w
+\left(\frac{81}{10000}+\frac{243z}{100000}\right)w^2.
\end{aligned}
$$
Multiplying by $10^{16}$ gives the integral polynomial
$$
\begin{aligned}
H(z,w)={}&10^{16}+3\cdot10^{15}z+81\cdot10^{12}z^2
+196830000000z^3+43046721z^4\\
&+(3\cdot10^{15}-18\cdot10^{14}z+121500000000z^2)w\\
&+(81\cdot10^{12}+243\cdot10^{11}z)w^2.
\end{aligned}
$$
Every vertex monomial of $\Delta$ occurs and no exponent lies outside
$\Delta$, hence $\operatorname{Newt}(H)=\Delta$.

 \begin{figure}[ht]
\centering
\includegraphics[width=.3\textwidth]{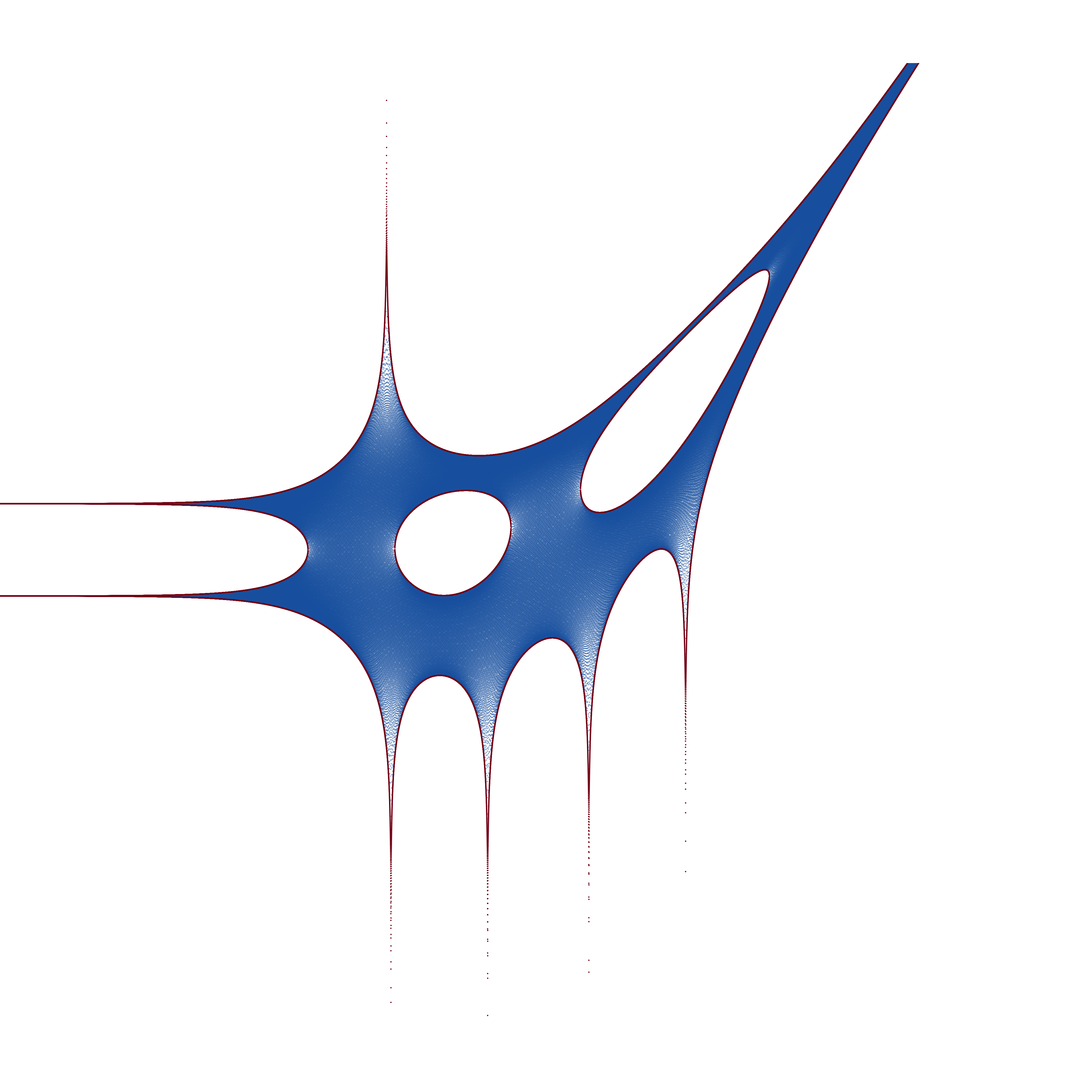}\qquad
\caption{Dark-color amoeba and clear red logarithmic critical values of the Harnack curve defined by the  polynomial $H$.}
\end{figure}

%%%%%%%%%%%%%%%%%%%%%%%%%%%%%%%%%%%%%%%%%%%%%%%%%%%%%%%%%%%%%%%%%%%%%%%%

\section{A Smooth Non-Harnack Curve with Contour Equal to the Amoeba Boundary}
 
 In this section, we study the question when the boundary of the amoeba is equal to its contour?
The following construction answers the question exactly if a smooth curve in $(\C^*)^2$ is allowed to be reducible.  The qualification is essential: the three irreducible components are pairwise disjoint inside the algebraic torus, so their union is smooth there, but their toric compactifications meet on the toric boundary.  Thus this example does not answer the stronger problem in which the curve is required to be irreducible and its toric compactification is required to be smooth and Newton nondegenerate.

Set
$\alpha=\dfrac{3+4i}{5}$ and $\beta=\dfrac{5+12i}{13}$.
Both numbers have absolute value one, they are distinct from one, and they are distinct from each other.  Define
$f(z,w)=(1+z+w)(1+z+\alpha w)(1+z+\beta w)$.
An expanded form, useful for reading the support, is
$$
f(z,w)=(1+z)^3+\frac{129+112i}{65}(1+z)^2w
+\frac{31+168i}{65}(1+z)w^2
+\frac{-33+56i}{65}w^3.
$$

\begin{theorem}
Let $C=V_f\subset(\C^*)^2$.  Then $C$ is a smooth, reducible plane curve, its Newton polygon is $3\Delta_2=\operatorname{conv}\{(0,0),(3,0),(0,3)\}$ and contains the interior lattice point $(1,1)$, and its amoeba satisfies
$\mathcal C(\mathscr A_f)=\partial\mathscr A_f$.
The curve $C$ is not torus-equivalent to a simple Harnack curve and is not the pullback of a smooth simple Harnack curve through any finite monomial covering of degree greater than one.
\end{theorem}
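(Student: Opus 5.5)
The plan is to reduce everything to the line $L=\{1+z+w=0\}$, which is a simple Harnack curve, by observing that the three components of $C$ are unitary torus translates of $L$. For $\gamma\in\{1,\alpha,\beta\}$ put $L_\gamma=\{1+z+\gamma w=0\}$. Then $L_\gamma=T_\gamma(L)$ with $T_\gamma(z,w)=(z,\gamma^{-1}w)$, and since $|\gamma|=1$ we have $\Logmap\circ T_\gamma=\Logmap$, exactly as in the proof of the universal existence theorem of the previous section.

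\textbf{Step 1: smoothness and Newton polygon.} Two components $L_\gamma,L_{\gamma'}$ with $\gamma\ne\gamma'$ meet only where $(\gamma-\gamma')w=0$, which forces $w=0$. So the components are pairwise disjoint in $(\C^*)^2$. Each is a smooth line, hence $C$ is smooth in the torus and visibly reducible. The Newton polygon of a product is the Minkowski sum of the factors' polygons. Each factor has $\operatorname{conv}\{(0,0),(1,0),(0,1)\}$, so the sum is $3\Delta_2$. This polygon contains $(1,1)$ in its interior, since $1+1<3$.

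\textbf{Step 2: contour equals boundary.} Because the components are disjoint closed submanifolds, the logarithmic critical locus of $C$ is the union $\bigcup_\gamma \Crit(\Logmap|_{L_\gamma})$, as criticality is a local condition. Transporting by $T_\gamma$ as in the earlier theorem gives
$$\Crit(\Logmap|_{L_\gamma})=T_\gamma(\Crit(\Logmap|_L)),\qquad \mathscr A_{L_\gamma}=\mathscr A_L,\qquad \mathcal C(\mathscr A_{L_\gamma})=\mathcal C(\mathscr A_L).$$
Hence $\mathscr A_f=\mathscr A_L$ and $\mathcal C(\mathscr A_f)=\mathcal C(\mathscr A_L)$. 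Since the line is simple Harnack, $\mathcal C(\mathscr A_L)=\partial\mathscr A_L=\partial\mathscr A_f$.

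\textbf{Step 3: not torus-equivalent to a simple Harnack curve.} I will use generic logarithmic fiber cardinality, as in the proposition on $f_m$. Over a regular interior point of $\mathscr A_L$, the line has exactly two preimages, and each $T_\gamma$ preserves fibers. Therefore $C$ has $6>2$ preimages there. Torus translations and monomial automorphisms preserve generic fiber cardinality up to an affine change of $\R^2$, whereas simple Harnack curves are at most two-to-one. Alternatively, one can note that torus automorphisms preserve irreducibility.

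\textbf{Step 4: not a pullback (main obstacle).} Here the fiber count alone is inconclusive, since $6=2N$ with $N=3$ is possible numerically. Instead I will use the translation stabilizer, as in the trapezoidal theorem. A pullback $\Phi_M^{-1}(H)$ with $|\det M|>1$ is invariant under $\ker\Phi_M$, a nontrivial finite group of torus translations. So it suffices to show that no translation $(z,w)\mapsto(\xi z,\eta w)$ other than the identity preserves $C$.

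Such a translation must permute the irreducible components. Now $1+\xi z+\gamma\eta w$ is proportional to some $1+z+\gamma' w$ only if $\xi=1$ and $\gamma\eta\in\{1,\alpha,\beta\}$. Thus multiplication by $\eta$ permutes the three-element set $\{1,\alpha,\beta\}$. If $\eta\ne1$, this permutation has no fixed point, so it is a $3$-cycle and $\eta^3=1$. The set is then a coset of the group of cube roots of unity, and since it contains $1$ it equals $\{1,e^{2\pi i/3},e^{4\pi i/3}\}$. But $\alpha=(3+4i)/5$ is not a cube root of unity, so $\eta=1$.

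The only delicate point is making sure the pullback hypothesis covers reducible or disconnected $H$. I will phrase it purely through kernel invariance of the zero set, so irreducibility of $H$ is never needed.
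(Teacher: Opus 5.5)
Your proof is correct. Steps 1 and 2 match the paper, with one difference. For the line you quote its simple Harnack property, whereas the paper checks $\mathcal C(\mathscr A_h)=\partial\mathscr A_h$ directly: criticality means $z/w\in\R$, which on $1+z+w=0$ forces $z,w$ real, i.e.\ a degenerate triangle with sides $1,|z|,|w|$.

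The real divergence is in the two exclusion statements.

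\textbf{Torus-inequivalence.} The paper uses only that a simple Harnack curve is irreducible while $C$ has three components. Your count of six logarithmic preimages over interior points also works, but it needs Mikhalkin's at-most-two-to-one theorem.

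\textbf{Pullbacks.} The paper argues through the support. All ten monomials of $3\Delta_2$ occur, so $\Lambda_f=\Z^2$. Smoothness of $V_g$ makes $g$ reduced, and the unramified covering keeps $g\circ\Phi_M$ reduced. Hence $f$ and $g\circ\Phi_M$ are associates, so $\Lambda_f\subseteq M^T\Z^2$, a proper sublattice, which is a contradiction. Your stabilizer argument bypasses both the reducedness/associate step and the support computation. It rules out $C=\Phi_M^{-1}(H)$ for every plane curve $H$, not only smooth simple Harnack ones. Its input is instead the arithmetic fact that $\{1,\alpha,\beta\}$ is not the group of cube roots of unity.

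The two inputs are really the same condition here. For unit-modulus $\alpha,\beta$, one has $1+\alpha+\beta=0$ exactly when $\{1,\alpha,\beta\}$ is that group. In that case the mixed monomials vanish and $f=(1+z)^3+w^3$ really is a pullback under $(z,w)\mapsto(z,w^3)$. Both arguments also survive a preliminary torus automorphism, since conjugating $\ker\Phi_M$ by one yields another nontrivial finite group of translations preserving $C$.
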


\begin{proof}
For $t\in\{1,\alpha,\beta\}$, let $L_t$ denote the curve $1+z+tw=0$ in $(\C^*)^2$.  Then $C=L_1\cup L_\alpha\cup L_\beta$.  Each $L_t$ is smooth because its gradient is $(1,t)$ and is nowhere zero.

The components are pairwise disjoint in the algebraic torus.  Indeed, if a point belonged to both $L_s$ and $L_t$ for two distinct parameters $s$ and $t$, subtraction of the two equations would give $(s-t)w=0$.  Since $s\ne t$ and $w\ne0$ in $(\C^*)^2$, this is impossible.  At every point of $C$ exactly one factor of $f$ vanishes.  If this factor is $1+z+tw$, differentiation of the product shows that $df$ is the nonzero differential $d(1+z+tw)$ multiplied by the two nonvanishing remaining factors.  Hence $df$ does not vanish at any point of $C$, proving that the entire reducible curve is smooth in $(\C^*)^2$.

We now calculate the Newton polygon.  Since the four coefficients appearing in the displayed expansion are nonzero, expanding the powers of $1+z$ shows that every monomial $z^aw^b$ with $a\geq0$, $b\geq0$, and $a+b\leq3$ occurs with a nonzero coefficient.  Consequently,
$\supp(f)=3\Delta_2\cap\Z^2$ and $\Newt(f)=3\Delta_2$.
The point $(1,1)$ satisfies $1>0$, $1>0$, and $1+1<3$, so it lies in the relative interior of this Newton polygon.  Thus the polygon has an interior lattice point, as required.

Consider first the Harnack line $L_1$, defined by $h(z,w)=1+z+w$.  Its amoeba has the elementary triangle-inequality description
$\mathscr A_h=\{(x,y)\in\R^2:e^x+e^y\geq1,\ 1+e^x\geq e^y,\ 1+e^y\geq e^x\}$.
The boundary is the union of the three loci on which one of these inequalities is an equality.  The logarithmic Gauss map of the line is $[z:w]$.  A point of the line is logarithmically critical precisely when $z/w\in\R$.  Together with $1+z+w=0$, this forces $z$ and $w$ to be real.  Their absolute values then form a degenerate triangle with side lengths $1$, $|z|$, and $|w|$.  Conversely, every degenerate triangle of these three lengths is realized by real nonzero $z$ and $w$ satisfying $1+z+w=0$.  It follows directly that
$\mathcal C(\mathscr A_h)=\partial\mathscr A_h$.

For a parameter $t$ of absolute value one, the torus automorphism $\rho_t(z,w)=(z,tw)$ preserves the logarithmic map, because $\log|tw|=\log|w|$.  It carries $L_t$ onto $L_1$.  Therefore
$\mathscr A_{L_t}=\mathscr A_h$ and $\mathcal C(\mathscr A_{L_t})=\mathcal C(\mathscr A_h)$.
Since the zero set of a product is the union of the zero sets of its factors and since the three components are disjoint, the amoeba and contour of $C$ satisfy
$\mathscr A_f=\mathscr A_{L_1}\cup\mathscr A_{L_\alpha}\cup\mathscr A_{L_\beta}=\mathscr A_h$
and
$\mathcal C(\mathscr A_f)=\mathcal C(\mathscr A_{L_1})\cup\mathcal C(\mathscr A_{L_\alpha})\cup\mathcal C(\mathscr A_{L_\beta})=\mathcal C(\mathscr A_h)$.
Combining these identities gives the desired equality
$\mathcal C(\mathscr A_f)=\partial\mathscr A_f$.

The curve is not torus-equivalent to a simple Harnack curve.  Under the standard definition, a smooth simple Harnack curve is an irreducible real algebraic curve embedded in a toric surface in maximal position.  In particular, its complex curve is connected and irreducible.  The curve $C$ has exactly three irreducible components.  A torus automorphism is an algebraic isomorphism and therefore preserves reducibility and the number of irreducible components.  No torus automorphism can consequently transform $C$ into a simple Harnack curve.

It remains to exclude every finite monomial pullback of degree greater than one.  Define the difference lattice of the support by
$\Lambda_f=\langle a-a':a,a'\in\supp(f)\rangle_\Z\subseteq\Z^2$.
Because the support contains $(0,0)$, $(1,0)$, and $(0,1)$, it contains the two differences $(1,0)$ and $(0,1)$.  Hence $\Lambda_f=\Z^2$.

Let $M=(m_{jk})\in\operatorname{Mat}_{2\times2}(\Z)$ have nonzero determinant.  The associated monomial map is defined by

$\Phi_M(z,w)=(z^{m_{11}}w^{m_{12}},z^{m_{21}}w^{m_{22}})$.

The degree of this finite monomial covering is $|\det M|$.  If $g(x,y)=\sum_b c_bx^{b_1}y^{b_2}$, then the exponent of the pullback of $x^{b_1}y^{b_2}$ is $M^Tb$.  Therefore every difference of two exponents of $g\circ\Phi_M$ lies in $M^T\Z^2$.  Multiplication by a Laurent monomial translates the support and does not alter its difference lattice.

Suppose that $C$ were the complete pullback of a smooth simple Harnack curve $V_g$ through $\Phi_M$.  Since $V_g$ is smooth, its defining polynomial is reduced.  The monomial covering is unramified on $(\C^*)^2$, so $g\circ\Phi_M$ is also reduced.  The reduced defining equations of the same hypersurface differ by a nonzero scalar and a Laurent monomial.  It would follow that
$\Lambda_f\subseteq M^T\Z^2$.
If $|\det M|>1$, the sublattice $M^T\Z^2$ has index $|\det M|$ in $\Z^2$ and is proper.  This contradicts $\Lambda_f=\Z^2$.  Hence no finite monomial covering of degree greater than one can produce $C$ as the complete pullback of a smooth simple Harnack curve.

This also shows that the obstruction is unchanged by any preliminary torus equivalence.  A monomial torus automorphism acts on exponent differences by a matrix in $\operatorname{GL}_2(\Z)$ and preserves the lattice index.  Rescaling $z$ and $w$ changes coefficients but not exponent positions.  The full-lattice obstruction therefore survives every allowed torus coordinate change.
\end{proof}

\begin{remark}
The distinction between smoothness in the algebraic torus and smoothness of the toric compactification must not be suppressed.  The three affine components are disjoint because their intersections have been moved to the toric boundary.  Their closures consequently meet there, and the product polynomial is not Newton nondegenerate.  The example is therefore a complete answer under the literal hypotheses stated in the question, but not under the stronger assumptions of irreducibility and a smooth Newton-nondegenerate toric compactification.
\end{remark}
 
%%%%%%%%%%%%%%%%%%%%%%%%%%%%%%%%%%%%%%%%%%%%%%%%%%%%%%%%%%%%%%%%%%%%%%%

\section{Smoothness and Boundary Equality for the Amoeba Contour}
 
Let $ f(z,w)=zw^2+w^2-z^4+10z^3-35z^2+50z-24$ and let $C=V(f)\subset(\mathbb C^*)^2$.  We prove that the complete contour
of the amoeba of $C$ is a smooth embedded real curve and that it is exactly the topological boundary of the amoeba.
Write
$$
P(z)=(z-1)(z-2)(z-3)(z-4).
$$
Then $f(z,w)=(z+1)w^2-P(z)$, so the equation of $C$ is
$$
w^2=R(z),
\qquad
R(z)=\frac{P(z)}{z+1}.
$$
The curve is smooth in the algebraic torus.  Indeed,
$f_w(z,w)=2(z+1)w$.  At every point of $(\mathbb C^*)^2$ one has $w\ne0$.
If $z=-1$, then $f(-1,w)=-P(-1)=-120\ne0$, so no point of $C$ has
$z=-1$.  Hence $f_w$ is nowhere zero on $C$.  In particular, $z$ is a
holomorphic local coordinate everywhere on $C$.

Put $z=re^{i\theta}$, where $r>0$.  Since the two square roots of $R(z)$
differ by sign, they have the same logarithmic image.  Thus the logarithmic
image above the circle $|z|=r$ is described by
$$
x=\log r,
\qquad
y_r(\theta)=\frac12\log|R(re^{i\theta})|.
$$
At radii for which the circle contains a zero or the pole of $R$, this
formula is interpreted on the punctured circle and by taking limits at its
ends.

We first determine the complete logarithmic critical locus.  In the local
real coordinates $(\log r,\theta)$ on the $z$-plane, the logarithmic map is
$(\log r,y_r(\theta))$.  Its Jacobian matrix has first row $(1,0)$, so its
determinant is $\partial y_r/\partial\theta$.  Therefore a point of $C$ is
critical precisely when
$
\dfrac{\partial y_r}{\partial\theta}=0.
$
For a real number $a$, direct differentiation gives
$$
\frac{\partial}{\partial\theta}
\log|re^{i\theta}-a|
=\frac{ar\sin\theta}
{r^2-2ar\cos\theta+a^2}.
$$
Applying this formula to the four zeros $1,2,3,4$ of $P$ and to the pole
$-1$ of $R$ gives
$$
\frac{\partial y_r}{\partial\theta}
=\frac{r\sin\theta}{2}B(r,\theta),
$$
where
$$
B(r,\theta)=
\sum_{a=1}^4\frac{a}{r^2-2ar\cos\theta+a^2}
+\frac{1}{r^2+2r\cos\theta+1}.
$$
Every denominator is the squared modulus of a nonzero complex number at a
point of $C$, and every numerator is positive.  Hence
$B(r,\theta)>0$.  It follows that
$\di
\frac{\partial y_r}{\partial\theta}=0
\, \Longleftrightarrow\, 
\sin\theta=0.
$
Thus every logarithmic critical point lies over the positive or negative
real $z$-axis, and every point of $C$ over these two rays is critical.  This
also proves that there are no additional nonreal angular critical branches.

The sign formula shows more.  On $0<\theta<\pi$ one has
$\partial y_r/\partial\theta>0$, while on
$\pi<\theta<2\pi$ one has
$\partial y_r/\partial\theta<0$.  Consequently, on each regular circle
$|z|=r$, the function $y_r$ increases strictly from its value at
$\theta=0$ to its value at $\theta=\pi$ and then decreases strictly back to
its initial value.  Its image is therefore the closed interval with these
two endpoint values.

Define
$$
Y_+(r)=\frac12\log\left|
\frac{(r-1)(r-2)(r-3)(r-4)}{r+1}\right|
$$
whenever the expression is finite, and
$$
Y_-(r)=\frac12\log\left|
\frac{(r+1)(r+2)(r+3)(r+4)}{1-r}\right|
$$
for $r\ne1$.  These are precisely $y_r(0)$ and $y_r(\pi)$.  Hence, at every
regular radius,
$$
\mathscr A_f\cap\bigl(\{\log r\}\times\mathbb R\bigr)
=\{\log r\}\times
[\min\{Y_+(r),Y_-(r)\},\max\{Y_+(r),Y_-(r)\}].
$$
At $r=1$, the positive endpoint tends to $-\infty$ because $z=1$ is a zero
of $R$, whereas the negative endpoint tends to $+\infty$ because $z=-1$ is
a pole.  The punctured circle consequently maps onto the entire vertical
line $\{0\}\times\mathbb R$.  At $r=2,3,4$, one endpoint tends to
$-\infty$, so the same slice description holds with an unbounded interval.
These limiting cases create no additional finite critical values.

It follows from the slice formula that a finite point of the amoeba boundary
must be one of the two endpoint values $Y_+(r)$ or $Y_-(r)$.  Conversely,
each finite endpoint is a boundary point.  Indeed, strict monotonicity shows
that values immediately between the two endpoints belong to the amoeba,
whereas values immediately beyond an endpoint do not belong to that
vertical slice.  Continuity with respect to $r$ then gives a local one-sided
neighborhood in the complement.  Since the critical values are exactly
these finite endpoint values, we obtain
$$
\mathcal C(\mathscr{A}_f)
=\Logmap\bigl(\Crit(\Logmap|_C)\bigr)
=\partial\mathscr A_f.
$$

It remains to prove that this common set is smooth and embedded.  Each of
the two critical-value families is a graph over $x=\log r$:
$$
\Gamma_+=\{(\log r,Y_+(r)):r>0,\ r\notin\{1,2,3,4\}\}
$$
and
$$
\Gamma_-=\{(\log r,Y_-(r)):r>0,\ r\ne1\}.
$$
On every indicated open interval, $Y_+$ and $Y_-$ are real analytic.  The
parametrizations have first coordinate derivative $1/r\ne0$, so each is a
regular real-analytic immersion.  The excluded radii correspond only to
ends escaping to infinity in $\mathbb R^2$; they are not missing finite
points at which a cusp could occur.

We must also exclude intersections between the two graph families.  If
$Y_+(r)=Y_-(r)$ for some admissible $r>0$, then
$$
\left|\frac{P(r)}{r+1}\right|
=\left|\frac{P(-r)}{1-r}\right|.
$$
All quantities in this equality are real.  Hence either
$P(r)(1-r)=P(-r)(r+1)$ or
$P(r)(1-r)=-P(-r)(r+1)$.  Direct expansion gives, respectively,
$$
P(r)(1-r)-P(-r)(r+1)
=-2r(r^4+45r^2+74)
$$
and
$$
P(r)(1-r)+P(-r)(r+1)
=2(11r^4+85r^2+24).
$$
The first expression is strictly negative for $r>0$, and the second is
strictly positive.  Neither equation has a positive solution.  Therefore
$\Gamma_+$ and $\Gamma_-$ are disjoint in the finite plane.

There are also no self-intersections within either family, because the first
coordinate $\log r$ uniquely determines $r$.  Thus every connected piece is
an embedded real-analytic graph, distinct pieces can meet only at infinity,
and the two families never meet each other.  The contour is consequently a
smooth embedded one-dimensional real-analytic submanifold of $\mathbb R^2$.
We have proved both asserted conclusions: the full logarithmic critical
value set is smooth, and
$
{\mathscr C_f=\partial\mathscr A_f.}
$
\begin{figure}[ht]
\centering
\includegraphics[width=0.22\textwidth]{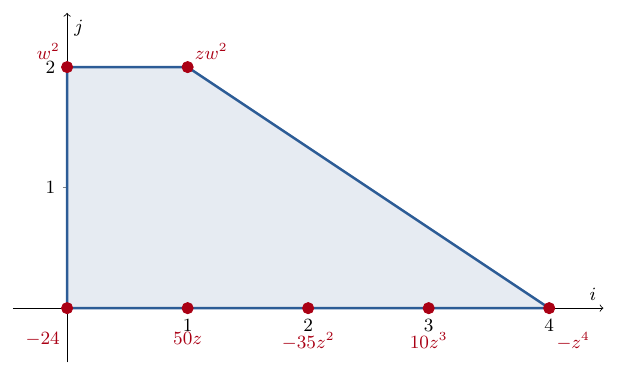}
\qquad
\includegraphics[width=0.32\textwidth]{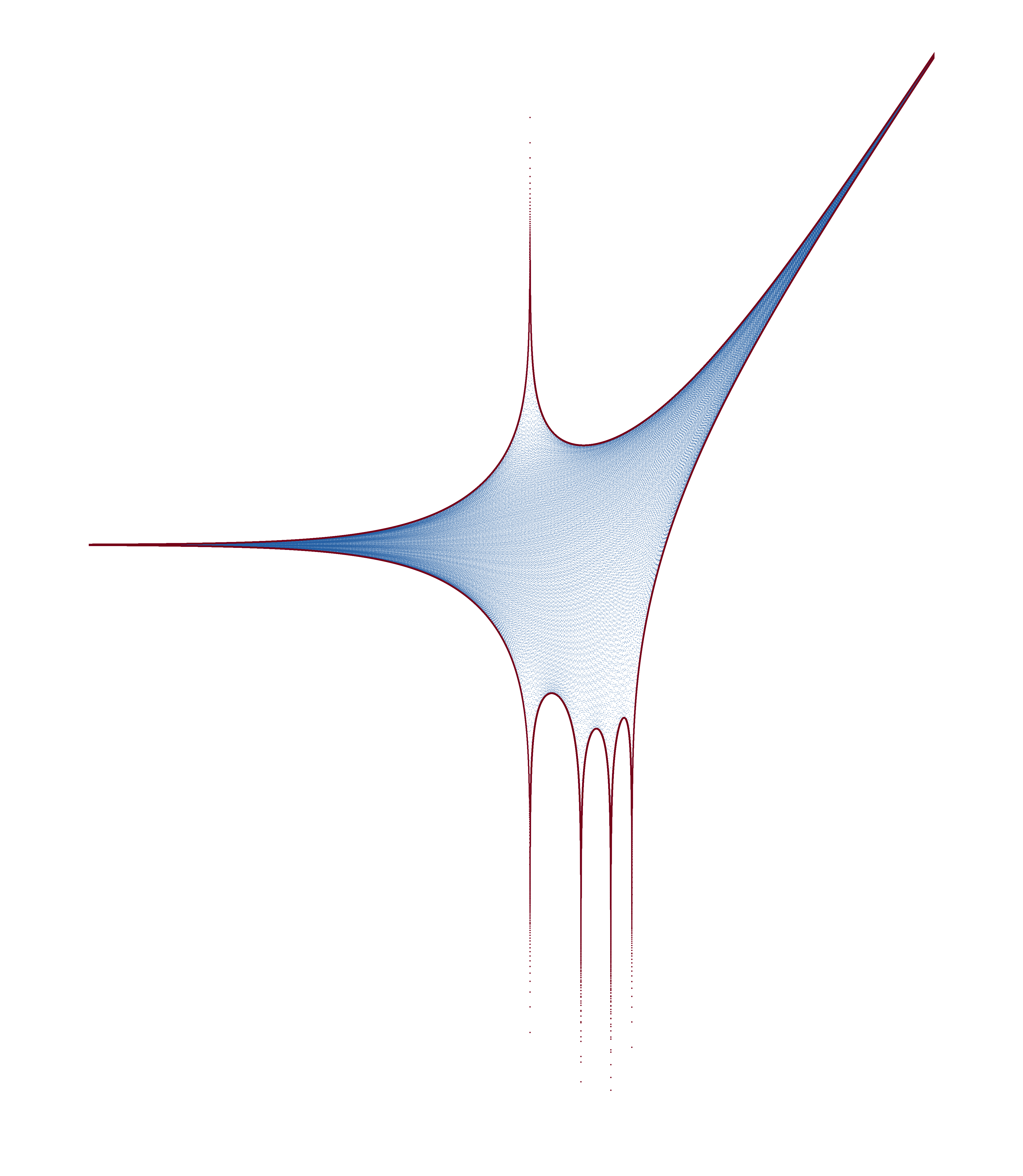}
\caption{Dark-blue amoeba and clear red logarithmic critical values of the curve defined by $f(z,w)=zw^2+w^2-z^4+10z^3-35z^2+50z-24$.}
\end{figure}

%%%%%%%%%%%%%%%%%%%%%%%%%%%%%%%%%%%%%%%%%%%%%%%%%%%%%%%%%%%%%%%%%%%%%%%%%

The Newton polygon is $\Delta=\operatorname{conv}\{(0,0),(4,0),(1,2)\}$.  The plotted polynomial is
$$
f(z,w)=1+\frac{3}{10}z+\left(\frac{3}{10}\right)^4z^2
+\left(\frac{3}{10}\right)^9z^3+\left(\frac{3}{10}\right)^{16}z^4
-\frac{117}{1250}zw+\frac{6561}{10000000}z^2w
+\left(\frac{3}{10}\right)^5zw^2.
$$

\begin{figure}[ht]
\centering
\includegraphics[width=0.42\textwidth]{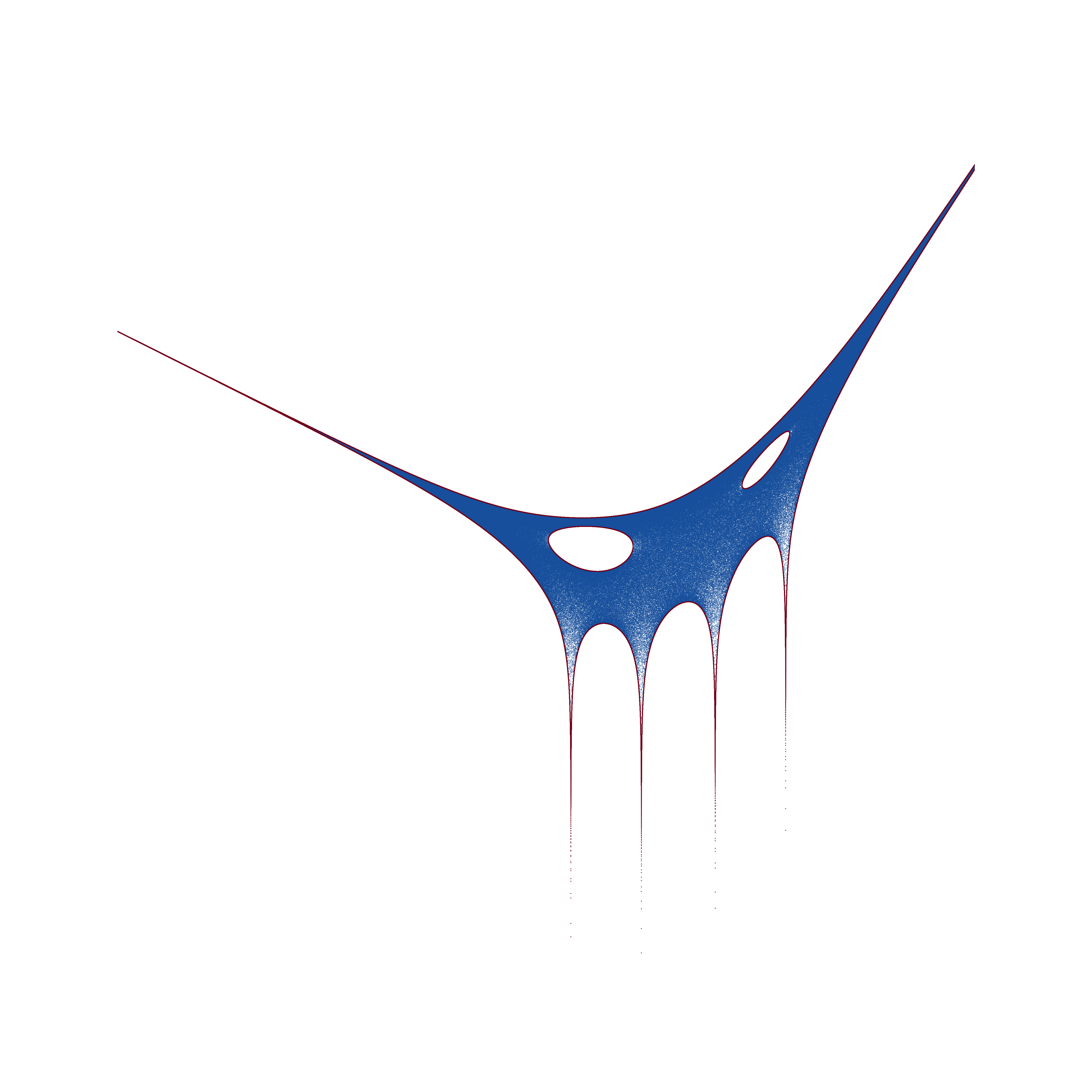}
\caption{Dark-blue amoeba of Harnack curve with Newton polygon $\Delta = \operatorname{Convexhull}\{(0,0), (4,0), (1,2), (0,2) \}$ and defining polynomial $f$.}
\end{figure} 
%%%%%%%%%%%%%%%%%%%%%%%%%%%%%%%%%%%%%%%%%%%%%%%%%%%%%%%%%%%%%%%%%%%%%%%%%%%
%%%%%%%%%%%%%%%%%%%%%%%%%%%%%%%%%%%%%%%%%%%%%%%%%%%%%%%%%%%%%%%%%%%%%%%%%%

\section{Exact Sturm computation}

Let $H$ be a nonzero square-free polynomial with real coefficients.  Its Sturm sequence is the finite sequence of polynomials $S_0,S_1,\ldots,S_m$ defined by $S_0=H$, $S_1=H'$, and
$
S_{j+1}=-\operatorname{rem}(S_{j-1},S_j)
$
for every subsequent index, where $\operatorname{rem}(A,B)$ denotes the remainder in the Euclidean division of $A$ by $B$.  The construction ends when the last nonzero remainder is reached.  If $H$ is square-free, then the final polynomial is a nonzero constant because $\gcd(H,H')=1$, see  for example \cite{BasuPollackRoy2006}.

For a real number $x$ that is not a zero of any relevant polynomial, let $V(x)$ be the number of sign changes in the ordered list
$
S_0(x),S_1(x),\ldots,S_m(x),
$
after all zero entries have been omitted.  Sturm's theorem states that, if $a<b$ and neither endpoint is a zero of $H$, then the number of distinct real roots of $H$ in $(a,b)$ is exactly $V(a)-V(b)$.  Thus the theorem converts the problem of counting roots into the exact determination of finitely many signs.

For the positive half-line, one uses the endpoint signs at $0^+$ and $+\infty$.  Because $H(0)\neq0$ in the present calculation, the notation $0^+$ simply means that the signs are evaluated immediately to the right of zero; they agree with the signs of the nonzero constant terms of the Sturm polynomials.  At $+\infty$, the sign of each $S_j(R)$ is the sign of its leading coefficient.  Consequently, both endpoint sign lists are obtained exactly from integer or rational coefficients.

In the polynomial occurring in the paper, the square-free factor $H$ has degree sixteen.  Its exact Sturm sequence has degrees
$
16,15,14,13,12,11,10,9,8,7,6,5,4,3,2,1,0.
$
The signs of the sequence at $0^+$ are
$
(+,-,+,+,-,-,+,+,+,-,-,+,+,-,-,+,+),
$
and the signs at $+\infty$ are
$
(+,+,+,-,+,+,+,-,-,+,-,-,+,-,-,-,+).
$
The first sign list has eight variations, and the second sign list also has eight variations.  Sturm's theorem therefore gives
$
N_H(0,+\infty)=V(0^+)-V(+\infty)=8-8=0.
$
Hence $H$ has no positive real root.

The adjective \emph{exact} is important.  Every coefficient of $H$ is an integer, every polynomial remainder can be formed over $\mathbb Q$, and every endpoint sign is determined symbolically.  There is therefore no rounding error.  A numerical routine could return approximate complex roots and suggest that none of them is positive and real, but such evidence would depend on tolerances and could fail to detect a root very close to the real axis or a pair of nearly multiple roots.  The Sturm calculation instead proves the root count as an integer identity.

In the application to the logarithmic critical locus, exact elimination gives
$
\operatorname{Res}_z(G,G_z)=\kappa R^7H(R),
$
where $\kappa\neq0$ and $R=r^2$.  A positive value $R>0$ at which the angular critical equation has a multiple root must make this resultant vanish.  The factor $R^7$ vanishes only at $R=0$, while the Sturm calculation proves that $H(R)\neq0$ for every $R>0$.  Therefore the resultant is nonzero on the entire positive radial axis.  It follows that no positive radius supports a ramified angular critical point.

This conclusion does not by itself count the angular critical points; it proves that their number cannot change through the creation, collision, or disappearance of multiple angular roots as $R$ varies over $(0,+\infty)$.  An additional exact Sturm count at one sample radius, chosen as $R=1$, shows that there are exactly two angular critical points there.  Since no positive ramification radius exists, the count remains equal to two for every positive nonexceptional radius.  The zero and pole radii are then checked separately in reciprocal toric coordinates.

%%%%%%%%%%%%%%%%%%%%%%%%%%%%%%%%%%%%%%%%%%%%%%%%%%%%%%%%%%%%%%%%%%%%%%%%%%

\subsection{A short exact Sturm computation}

Consider the polynomial $H(x)=x^3-x-1$.  We shall use Sturm's theorem to prove exactly that $H$ has one positive real root, that this root belongs to $(1,2)$, and that $H$ has no negative real root.

The Sturm sequence begins with $S_0(x)=H(x)=x^3-x-1$ and $S_1(x)=H'(x)=3x^2-1$.  Euclidean division gives
$
S_0(x)=\frac{x}{3}S_1(x)-\left(\frac{2x}{3}+1\right).
$
The next Sturm polynomial is the negative of the remainder.  Multiplying it by the positive constant $3$ does not change any sign variation, so we may take $S_2(x)=2x+3$.

Dividing $S_1$ by $S_2$ gives
$
S_1(x)=\left(\frac{3x}{2}-\frac94\right)S_2(x)+\frac{23}{4}.
$
The negative of this remainder is $-23/4$.  Again multiplying by a positive constant does not change signs, so we take $S_3(x)=-23$.  The complete Sturm sequence is therefore
$
S_0=x^3-x-1,\qquad S_1=3x^2-1,\qquad S_2=2x+3,\qquad S_3=-23.
$

Let $V(a)$ denote the number of sign changes in the list $S_0(a),S_1(a),S_2(a),S_3(a)$, after any zero entry is omitted.  At $0$ the signs are
$
(-,-,+,-),
$
so $V(0)=2$.  At $+\infty$, the signs are determined by the leading terms and are
$
(+,+,+,-),
$
so $V(+\infty)=1$.  Sturm's theorem now gives
$
N_H(0,+\infty)=V(0)-V(+\infty)=2-1=1.
$
Hence $H$ has exactly one positive real root.

The same computation isolates this root.  At $x=1$, one has $S_0(1)=-1$, $S_1(1)=2$, $S_2(1)=5$, and $S_3(1)=-23$.  The sign list is $(-,+,+,-)$ and therefore $V(1)=2$.  At $x=2$, one has $S_0(2)=5$, $S_1(2)=11$, $S_2(2)=7$, and $S_3(2)=-23$.  The sign list is $(+,+,+,-)$ and therefore $V(2)=1$.  Consequently,
$
N_H(1,2)=V(1)-V(2)=2-1=1.
$
Thus the unique positive root lies in $(1,2)$.

For completeness, at $-\infty$ the signs are
$
(-,+,-,-),
$
which have two variations.  Since $V(0)=2$, Sturm's theorem gives
$
N_H(-\infty,0)=V(-\infty)-V(0)=2-2=0.
$
Therefore $H$ has no negative real root.  Moreover, $H(0)=-1\neq0$, so zero is not a root.  We conclude that $H(x)=x^3-x-1$ has exactly one real root, this root is positive, and it belongs to the interval $(1,2)$.

Every operation above is performed with integers and rational numbers.  This is why the computation is exact: no decimal approximation of the root and no numerical tolerance are used.

%%%%%%%%%%%%%%%%%%%%%%%%%%%%%%%%%%%%%%%%%%%%%%%%%%%%%%%%%%%%%%%%%%%%%%%%%%

%%%%%%%%%%%%%%%%%%%%%%%%%%%%%%%%%%%%%%%%%%%%%%%%%%%%%%%%%%%%%%%%%%%%%%%%%%%

\section{Exponent Lattices and Finite Monomial Coverings}
%\section{Appendix: Conjecture}
 
Let
$f(z,w)=\sum_{a\in A}c_a z^{a_1}w^{a_2}$, where $c_a\neq0$ and
$A=\supp(f)\subset\mathbb Z^2$.  
The exponent-difference lattice of $f$ is
$$
L_f=\left\langle a-b:\ a,b\in\supp(f)\right\rangle_{\mathbb Z}
\subseteq\mathbb Z^2.
$$
The brackets with the subscript $\mathbb Z$ mean that arbitrary integer
linear combinations are allowed.  Thus $L_f$ consists of all vectors of the
form
$$
n_1(a_1-b_1)+\cdots+n_r(a_r-b_r),
\qquad n_1,\ldots,n_r\in\mathbb Z,
$$
where the $a_j$ and $b_j$ belong to the support of $f$.

\begin{definition}
 We say that the exponent differences generate $\mathbb Z^2$ 
 when $L_f=\mathbb Z^2$.  Equivalently, every lattice vector
$(m,n)\in\mathbb Z^2$ can be expressed as an integer linear combination of
differences of exponents occurring in $f$. 
\end{definition}

One may fix any exponent $a_0\in A$ and use only the differences $a-a_0$.
Indeed,
$a-b=(a-a_0)-(b-a_0)$, so
$
L_f=\left\langle a-a_0:\ a\in A\right\rangle_{\mathbb Z}.
$
Consequently the definition is independent of the chosen base exponent.
Multiplying $f$ by a Laurent monomial translates every exponent by the same
vector and therefore does not change $L_f$.

Consider $f(z,w)=1+z+w+z^2w$.  Its support is
$A=\{(0,0),(1,0),(0,1),(2,1)\}$.  Taking $a_0=(0,0)$ gives the differences
$(1,0)$, $(0,1)$ and $(2,1)$.  Since the first two already generate
$\mathbb Z^2$, one has $L_f=\mathbb Z^2$.

The condition may also hold when neither standard basis vector occurs
directly.  Suppose that the available differences include $(2,1)$ and
$(1,1)$.  Their difference is $(1,0)$, and
$(1,1)-(1,0)=(0,1)$.  Hence these two vectors generate $\mathbb Z^2$, because
$$
\det\begin{pmatrix}2&1\\1&1\end{pmatrix}=1.
$$

In contrast, let $f(z,w)=1+z^2+w^2+z^2w^2$.  Every exponent has two even
coordinates, and every exponent difference belongs to $2\mathbb Z^2$.
In fact $L_f=2\mathbb Z\oplus2\mathbb Z$, which has index $4$ in
$\mathbb Z^2$.  The polynomial factors through the degree-four monomial map
$(z,w)\mapsto(z^2,w^2)$:
$$
f(z,w)=h(z^2,w^2),
\qquad h(u,v)=1+u+v+uv.
$$

As a second example, the lattice generated by $(2,0)$ and $(1,1)$ has index
$2$, because
$$
\left|\det\begin{pmatrix}2&1\\0&1\end{pmatrix}\right|=2.
$$
A polynomial whose translated support lies in this lattice factors through
the degree-two monomial map $(z,w)\mapsto(z^2,zw)$.

There is an effective determinant test.  Assume that the Newton polygon is
two-dimensional.  Choose $a_0\in A$, write
$v_a=a-a_0$, and form all determinants $\det(v_a,v_b)$.  Then
$$
[\mathbb Z^2:L_f]
=\gcd\left\{|\det(v_a,v_b)|:\ a,b\in A\right\}.
$$
Zero determinants may be omitted.  Therefore the exponent differences
generate $\mathbb Z^2$ exactly when the greatest common divisor of these
determinants is $1$.  This is the two-dimensional form of the Smith normal
form criterion.

For instance, consider only the four vertices
$A=\{(0,0),(4,0),(1,2),(0,2)\}$ of the quadrilateral discussed earlier.
Using $(0,0)$ as base point, the nonzero absolute determinants are $8$, $8$
and $2$.  Their greatest common divisor is $2$, so the vertex support alone
generates a sublattice of index $2$, not all of $\mathbb Z^2$.  If the support
also contains $(1,0)$ and $(0,1)$, then the exponent differences contain
$(1,0)$ and $(0,1)$ and the resulting lattice is all of $\mathbb Z^2$.
Thus this property depends on the actual support of the polynomial, not only
on the convex shape of its Newton polygon.

The relation with monomial coverings is exact.  Suppose that $L_f$ has finite
index $q>1$.  Choose a lattice basis $m_1,m_2$ of $L_f$ and let $M$ be the
integer matrix having $m_1$ and $m_2$ as its columns.  Then
$|\det M|=q$.  After choosing $a_0\in\supp(f)$, every exponent can be written
$a=a_0+M\nu_a$ with $\nu_a\in\mathbb Z^2$.  Consequently
$$
f(z,w)=z^{(a_0)_1}w^{(a_0)_2}
h\bigl(z^{m_{11}}w^{m_{21}},z^{m_{12}}w^{m_{22}}\bigr)
$$
for a Laurent polynomial $h$.  Hence, up to an irrelevant Laurent monomial,
the curve is obtained by pulling back $V(h)$ through a finite monomial map of
degree $q$.

Conversely, if $f$ factors through a monomial map whose exponent matrix has
determinant of absolute value $q>1$, then all exponent differences lie in the
proper image lattice of that matrix.  Therefore they cannot generate
$\mathbb Z^2$.

%%%%%%%%%%%%%%%%%%%%%%%%%%%%%%%%%%%%%%%%%%%%%%%%%%%%%%%%%%%%%%%%%%%%%%%%%%%%

\subsection{Exponent Lattices and Finite Monomial Coverings}

Let $f\in\mathbb C[z^{\pm1},w^{\pm1}]$ be a nonzero Laurent polynomial and
write $f(z,w)=\sum_{a\in A}c_a z^{a_1}w^{a_2}$, where
$A=\supp(f)\subset\mathbb Z^2$ and every $c_a$ is nonzero.  We assume that
$\operatorname{Newt}(f)=\operatorname{conv}(A)$ is two-dimensional.  This
assumption implies that the abelian group generated by the differences
$a-b$, with $a,b\in A$, has rank two.

The fourth condition in the proposed statement needs a precise formulation.
The phrase ``does not factor through a nontrivial finite monomial covering
merely because all its exponents lie in a proper affine sublattice'' will mean
that there do not exist $a_0\in\mathbb Z^2$, a matrix
$M\in\operatorname{Mat}_{2\times2}(\mathbb Z)$ with $|\det M|>1$, and a
Laurent polynomial $h\in\mathbb C[u^{\pm1},v^{\pm1}]$ such that
$$
f(z,w)=z^{(a_0)_1}w^{(a_0)_2}
h\bigl(z^{m_{11}}w^{m_{21}},z^{m_{12}}w^{m_{22}}\bigr).
$$
Here the columns of $M$ are $m_1=(m_{11},m_{21})$ and
$m_2=(m_{12},m_{22})$.  The associated monomial map is
$\Phi_M(z,w)=(z^{m_{11}}w^{m_{21}},z^{m_{12}}w^{m_{22}})$.  Since
$\det M\neq0$, this is a finite covering of $(\mathbb C^*)^2$ of degree
$|\det M|$.  With this interpretation, the proposition becomes an exact
lattice-theoretic statement.

\begin{proposition}
Let $f$ be a Laurent polynomial whose Newton polygon is two-dimensional.  Fix
$a_0\in A=\supp(f)$, put $v_a=a-a_0$, and define
$L_f=\langle a-b:a,b\in A\rangle_{\mathbb Z}$.  The following conditions are
equivalent: $L_f=\mathbb Z^2$; the index $[\mathbb Z^2:L_f]$ is one; the
greatest common divisor of the nonzero integers $|\det(v_a,v_b)|$, with
$a,b\in A$, is one; and $f$ has no representation of the preceding form with
$|\det M|>1$.
\end{proposition}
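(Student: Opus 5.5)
I will prove the equivalences in the cyclic order (i) $L_f=\mathbb Z^2$ $\Leftrightarrow$ (ii) $[\mathbb Z^2:L_f]=1$ $\Leftrightarrow$ (iii) $\gcd|\det(v_a,v_b)|=1$, and then (i) $\Leftrightarrow$ (iv), where (iv) is the nonexistence of a factorization with $|\det M|>1$. The step (i)$\Leftrightarrow$(ii) is a tautology once we know $L_f$ has finite index. Finite index follows from the hypothesis that $\operatorname{Newt}(f)$ is two-dimensional: $A$ contains $a_0$ and two further points $a,b$ with $v_a,v_b$ linearly independent, so $L_f$ has rank two and $[\mathbb Z^2:L_f]=|\det|$ of any basis matrix, which is finite. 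As noted before the definition, $L_f=\langle v_a : a\in A\rangle_{\mathbb Z}$, so everything may be phrased with the $v_a$.

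For (ii)$\Leftrightarrow$(iii), I would use the Smith normal form. Let $V$ be the $2\times|A|$ integer matrix with columns $v_a$. Then $L_f$ is the column span of $V$, and by the Smith normal form $[\mathbb Z^2:L_f]=d_1d_2$, where $d_1d_2$ is the second determinantal divisor. That divisor is the gcd of all $2\times2$ minors, namely the $\det(v_a,v_b)$. This gcd is invariant under the unimodular row and column operations that diagonalize $V$. Zero minors do not affect a gcd, and at least one minor is nonzero by the rank-two remark, so the gcd is a positive integer equal to the index. This is the step needing the most care. If I wanted to avoid quoting Smith normal form, I would argue directly. Each $\det(v_a,v_b)$ lies in $\det(L_f)\mathbb Z$, since $v_a,v_b\in L_f=P\mathbb Z^2$ for a basis matrix $P$, giving $\det(v_a,v_b)=\det P\cdot\det(P^{-1}v_a,P^{-1}v_b)$. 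Conversely, a basis of $L_f$ is an integer combination of the $v_a$, so by multilinearity $\det P$ is an integer combination of the minors. Together these two facts give $|\det P|=\gcd$.

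For (i)$\Leftrightarrow$(iv), I would follow the two paragraphs preceding the proposition. If $L_f$ has index $q>1$, take $M$ whose columns form a basis of $L_f$, so $|\det M|=q$. Write each $a\in A$ as $a=a_0+M\nu_a$ with $\nu_a\in\mathbb Z^2$, and set $h(u,v)=\sum_a c_a u^{(\nu_a)_1}v^{(\nu_a)_2}$. Then $z^{a_1}w^{a_2}=z^{(a_0)_1}w^{(a_0)_2}\,\Phi_M(z,w)^{\nu_a}$, because the exponent of the pullback of $u^{\nu_1}v^{\nu_2}$ is $M\nu$ with the columns convention stated. This gives a forbidden factorization, so (iv) fails.

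Conversely, suppose such a factorization exists with $|\det M|>1$. Every monomial of the right-hand side has exponent in $a_0+M\mathbb Z^2$. Distinct monomials of $h$ pull back to distinct exponents because $M$ is injective, so no cancellation occurs. Hence $\supp(f)\subseteq a_0+M\mathbb Z^2$, and therefore $L_f\subseteq M\mathbb Z^2$, a sublattice of index $|\det M|>1$. This contradicts (i). Together these show (i)$\Leftrightarrow$(iv) and complete the chain.
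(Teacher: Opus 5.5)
Your proposal is correct and follows essentially the same route as the paper. You identify the index with the gcd of the $2\times 2$ minors via the Smith normal form, and your optional multilinearity argument is a valid elementary substitute for that step. For the covering condition you use the same basis-of-$L_f$ construction of $h$, and the same inclusion $L_f\subseteq M\mathbb Z^2$ for the converse.
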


\begin{proof}
We begin by checking that the lattice does not depend on the chosen base
exponent.  For every $a,b\in A$ one has
$a-b=(a-a_0)-(b-a_0)$.  Hence every generator $a-b$ belongs to
$\langle a-a_0:a\in A\rangle_{\mathbb Z}$.  Conversely, since $a_0\in A$,
every vector $a-a_0$ is itself a difference of two elements of $A$.
Therefore
$$
L_f=\langle a-a_0:a\in A\rangle_{\mathbb Z}.
$$
If another base point $a_1\in A$ is used, the same argument gives the same
lattice.  In particular, translating the whole support by one lattice vector,
which corresponds to multiplying $f$ by a Laurent monomial, does not change
$L_f$.

Because $\operatorname{conv}(A)$ is two-dimensional, there exist
$a,b\in A$ for which $v_a$ and $v_b$ are linearly independent over
$\mathbb R$.  It follows that $L_f$ has rank two.  Every rank-two subgroup of
$\mathbb Z^2$ has finite index.  Thus $[\mathbb Z^2:L_f]$ is a positive
integer.

The equivalence between $L_f=\mathbb Z^2$ and
$[\mathbb Z^2:L_f]=1$ follows directly from the definition of index.  Indeed,
the index is the cardinality of the quotient group $\mathbb Z^2/L_f$.  This
quotient has one element exactly when every element of $\mathbb Z^2$ lies in
$L_f$, which is exactly the equality $L_f=\mathbb Z^2$.

We now establish the determinant formula.  Enumerate the support as
$A=\{a_0,a_1,\ldots,a_N\}$ and form the integer matrix
$V=(v_{a_1}\ \cdots\ v_{a_N})\in\operatorname{Mat}_{2\times N}(\mathbb Z)$.
The image of the homomorphism $V:\mathbb Z^N\to\mathbb Z^2$ is precisely
$L_f$.  Since $V$ has rank two, the Smith normal form theorem gives unimodular
matrices $U\in\operatorname{GL}_2(\mathbb Z)$ and
$W\in\operatorname{GL}_N(\mathbb Z)$ and positive integers $d_1,d_2$ with
$d_1\mid d_2$ such that
$$
UVW=\begin{pmatrix}d_1&0&0&\cdots&0\\0&d_2&0&\cdots&0\end{pmatrix}.
$$
Multiplication by $U$ is an automorphism of the target lattice and
multiplication by $W$ is an automorphism of the source lattice.  They do not
change the isomorphism class of the cokernel.  Consequently
$\mathbb Z^2/L_f\cong\mathbb Z/d_1\mathbb Z\oplus
\mathbb Z/d_2\mathbb Z$, and therefore
$[\mathbb Z^2:L_f]=d_1d_2$.

Let $D(V)$ denote the greatest common divisor of all $2\times2$ minors of
$V$, with signs ignored.  At least one such minor is nonzero because $V$ has
rank two.  Elementary integral column operations preserve $D(V)$.  Interchanging
two columns changes the sign of some minors but not their absolute values.
Multiplying one column by $-1$ also changes only signs.  Replacing one column
by itself plus an integer multiple of another column transforms each affected
minor into the old minor plus an integer multiple of another old minor.  This
shows that the new minors generate the same ideal of $\mathbb Z$ as the old
minors.  The same argument applies to unimodular row operations.  Hence
$D(V)=D(UVW)$.

For the Smith normal form matrix, every $2\times2$ minor is zero except for
the minor using the first two nonzero columns, whose absolute value is
$d_1d_2$.  It follows that $D(UVW)=d_1d_2$.  Combining this with the index
calculation gives
$$
[\mathbb Z^2:L_f]
=\gcd\{\,|\det(v_a,v_b)|:a,b\in A\,\}.
$$
Therefore $L_f$ has index one if and only if the indicated greatest common
divisor is one.  Notice that this argument also explains why the formula is
independent of the chosen base point, although that independence already
follows from the intrinsic definition of $L_f$.

It remains to prove the equivalence with the monomial-covering condition.
Assume first that $L_f$ is a proper sublattice of $\mathbb Z^2$.  Since it has
rank two, choose a $\mathbb Z$-basis $m_1,m_2$ of $L_f$ and let $M$ be the
integer matrix with these vectors as columns.  The image $M\mathbb Z^2$ is
$L_f$.  The absolute determinant of a lattice basis equals the index of the
lattice, so $|\det M|=[\mathbb Z^2:L_f]>1$.

For every $a\in A$, the vector $a-a_0$ belongs to $L_f=M\mathbb Z^2$.
Because the columns of $M$ form a basis of $L_f$, there is a unique
$\nu_a=(p_a,q_a)\in\mathbb Z^2$ satisfying $a-a_0=M\nu_a$.  Define
$h(u,v)=\sum_{a\in A}c_a u^{p_a}v^{q_a}$.  This is a Laurent polynomial.
For $u=z^{m_{11}}w^{m_{21}}$ and
$v=z^{m_{12}}w^{m_{22}}$, one has
$$
u^{p_a}v^{q_a}
=z^{m_{11}p_a+m_{12}q_a}w^{m_{21}p_a+m_{22}q_a}
=z^{(a-a_0)_1}w^{(a-a_0)_2}.
$$
Multiplying by $z^{(a_0)_1}w^{(a_0)_2}$ and summing over $a$ gives the exact
identity
$$
f(z,w)=z^{(a_0)_1}w^{(a_0)_2}h\bigl(\Phi_M(z,w)\bigr).
$$
Since $|\det M|>1$, this is a factorization through a nontrivial finite
monomial covering.

For completeness, we verify the degree assertion.  On character lattices,
$\Phi_M$ induces the injective homomorphism
$M:\mathbb Z^2\to\mathbb Z^2$.  Its cokernel has order $|\det M|$.  The
kernel of $\Phi_M$ is the finite group dual to this cokernel, so it also has
$|\det M|$ elements.  A homomorphism of algebraic tori with finite kernel is
finite, and every fiber is a translate of the kernel.  Hence
$\deg(\Phi_M)=|\det M|$.

Conversely, suppose that there exist $a_0$, $M$ and $h$ with
$|\det M|>1$ such that
$f=z^{(a_0)_1}w^{(a_0)_2}h\circ\Phi_M$.  Write
$h(u,v)=\sum_{\nu\in B}d_\nu u^{\nu_1}v^{\nu_2}$ with every $d_\nu\neq0$.
Distinct exponents $\nu$ have distinct images $M\nu$ because $\det M\neq0$.
Thus no two monomials become the same Laurent monomial after substitution,
and no cancellation changes the support.  Consequently
$$
\supp(f)=a_0+M B.
$$
For any two exponents $a_0+M\nu$ and $a_0+M\mu$ in this support, their
difference is $M(\nu-\mu)$.  Therefore $L_f\subseteq M\mathbb Z^2$.  Since
$|\det M|>1$, the image $M\mathbb Z^2$ is a proper sublattice of
$\mathbb Z^2$.  Hence $L_f$ is also proper and cannot equal $\mathbb Z^2$.

We have proved both implications.  A proper exponent-difference lattice gives
a factorization through a finite monomial covering of degree greater than one,
and any such factorization forces the exponent-difference lattice to be
proper.  Combining this conclusion with the index criterion and the
determinant formula proves all the claimed equivalences.
\end{proof}

%%%%%%%%%%%%%%%%%%%%%%%%%%%%%%%%%%%%%%%%%%%%%%%%%%%%%%%%%%%%%%%%%%%%%%%%%%
%%%%%%%%%%%%%%%%%%%%%%%%%%%%%%%%%%%%%%%%%%%%%%%%%%%%%%%%%%%%%%%%%%%%%%%%%%
%%%%%%%%%%%%%%%%%%%%%%%%%%%%%%%%%%%%%%%%%%%%%%%%%%%%%%%%%%%%%%%%%%%%%%%%%%

\subsection{The contour--boundary rigidity question}

Let $f$ be a Laurent polynomial in two variables, let $C=V(f)\subset(\C^*)^2$, and denote by $\mathscr A_f=\Log(C)$ its amoeba.  The contour $\mathcal C(\mathscr A_f)$ is the image under the logarithmic map of the critical locus of $\Log|_C$.  Since the boundary of the amoeba is contained in its contour under the usual regularity assumptions, the equality $\mathcal C(\mathscr A_f)=\partial\mathscr A_f$ means that every logarithmic critical value lies on the boundary and that no critical-value branch is contained in the interior of the amoeba.

\begin{question}[Contour--boundary rigidity]\label{ques:contour-boundary-rigidity}
Let $C=V(f)\subset(\C^*)^2$ be an irreducible smooth curve whose compactification in the toric surface $X_{\Newt(f)}$ is smooth and Newton nondegenerate.  Assume that the logarithmic critical locus of $\Log|_C$ is smooth and that $\mathcal C(\mathscr A_f)=\partial\mathscr A_f$.  If the exponent differences of $\supp(f)$ generate $\Z^2$, must $C$ be torus-equivalent to a smooth simple Harnack curve?

More generally, if the exponent-difference lattice is a proper finite-index sublattice of $\Z^2$, must the primitive reduction of $C$ be torus-equivalent to a smooth simple Harnack curve?  Equivalently, must $C$ be obtained from a smooth simple Harnack curve by a finite monomial covering, up to torus equivalence and multiplication of $f$ by a Laurent monomial?
\end{question}

The most accurate statement of the relationship between Question 1 and the challenge question of Lang, Shapiro, and Shustin in \cite{LangShapiroShustin2021} is therefore the following.  The contour--boundary rigidity question is directly motivated by the final challenge of Lang, Shapiro, and Shustin, because it investigates the zero-residual-contour endpoint  $\mathcal R_f=\mathcal C(\mathscr A_f)\setminus\partial\mathscr A_f$ of their proposed compensation phenomenon.  It is nevertheless a distinct and more specialized problem: their challenge seeks a quantitative comparison among several kinds of topological and singularity complexity, whereas the rigidity question asks for an algebro-geometric classification under exact contour--boundary equality.  A positive answer would provide an extremal rigidity theorem compatible with their picture; a negative answer would show that their desired compensation rule cannot depend only on the set-theoretic difference between the contour and the amoeba boundary.

%%%%%%%%%%%%%%%%%%%%%%%%%%%%%%%%%%%%%%%%%%%%%%%%%%%%%%%%%%%%%%%%%%%%%%%%%%
%%%%%%%%%%%%%%%%%%%%%%%%%%%%%%%%%%%%%%%%%%%%%%%%%%%%%%%%%%%%%%%%%%%%%%%%%%
%%%%%%%%%%%%%%%%%%%%%%%%%%%%%%%%%%%%%%%%%%%%%%%%%%%%%%%%%%%%%%%%%%%%%%%%%%

\section{Theorem: Negative Answer to the Contour--boundary rigidity Question}

The following theorem is a  negative answer  to the contour--boundary rigidity. 
 It shows that the equality between the complete contour and the boundary of an amoeba does not characterize smooth simple Harnack curves, even when one assumes irreducibility, smoothness, Newton nondegeneracy, smoothness of the logarithmic critical locus, smoothness of the contour, and primitive exponent support.

Consider the Laurent polynomial
$f_0(z,w)=1+z+(1/100-i/5000)z^2+w+(1/20-7i/200)zw$.
Put $p=1/100-i/5000$, $q=1/20-7i/200$, $P(z)=1+z+pz^2$ and
$Q(z)=1+qz$.  Thus $f_0=P+Qw$, and the curve $C=V(f_0)$ is parametrized by
$w=-P(z)/Q(z)$.  Its Newton polygon is
$\Delta=\operatorname{conv}\{(0,0),(2,0),(1,1),(0,1)\}$.

\begin{theorem}\label{thm:main}
Let
$\di
f_0(z,w)=1+z+\left(\frac{1}{100}-\frac{i}{5000}\right)z^2+w+
\left(\frac{1}{20}-\frac{7i}{200}\right)zw
$
and let $C=V(f_0)\subset(\C^*)^2$.  Its Newton polygon is $\Delta=\operatorname{conv}\{(0,0),(2,0),(1,1),(0,1)\}$.  Then $C$ is irreducible and smooth, and its compactification in the toric surface $X_\Delta$ is smooth and Newton nondegenerate.  The logarithmic critical locus of $C$ is smooth, the contour $\mathcal C(\mathscr A_{f_0})$ is smooth, and
$
\mathcal C(\mathscr A_{f_0})=\partial\mathscr A_{f_0}.
$
Moreover, the exponent differences of $\supp(f_0)$ generate $\Z^2$, but $C$ is not torus-equivalent to a real curve.  In particular, $C$ is not torus-equivalent to a smooth simple Harnack curve.  Consequently, the primitive contour--boundary rigidity question has a negative answer.
\end{theorem}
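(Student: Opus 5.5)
The plan is to treat $C$ as the graph $w=-P(z)/Q(z)$ over $z$ and follow the pattern of the earlier trapezoidal and $w^2=R(z)$ examples.

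\textbf{Algebraic properties.}
Irreducibility follows because $f_0$ has degree one in $w$ and its coefficients $P$ and $Q$ are coprime. One checks this by noting that $P(-1/q)\neq0$, by direct evaluation with exact rational arithmetic. Smoothness in the torus follows because $(f_0)_w=Q$ vanishes only at $z=-1/q$, where $f_0=P(-1/q)\neq0$.

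Newton nondegeneracy is checked edge by edge.
\begin{itemize}
\item Bottom edge: $1+z+pz^2$ must have distinct roots, that is, $1-4p\neq0$.
\item Top edge: $w(1+qz)$, whose nonmonomial factor has a simple root.
\item Left edge: $1+w$.
\item Slanted edge from $(2,0)$ to $(1,1)$: $z(pz+qw)$, which is reduced.
\end{itemize}
All of these are immediate. The support contains $(0,0)$, $(1,0)$ and $(0,1)$, so $L_f=\Z^2$ by the proposition of the previous section.

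\textbf{Not torus-equivalent to a real curve.}
First, there is no nontrivial monomial automorphism preserving $\Delta$ up to translation other than possibly the symmetry exchanging the two nonparallel edges. Since $\Delta$ has one edge of length $2$ and one of length $1$, and these are the only parallel edges, $\mathrm{Aut}(\Delta)$ is trivial. So it suffices to consider diagonal rescalings $(z,w)\mapsto(\lambda z,\mu w)$ together with a global scalar $c$.

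Reality of all five coefficients of $c\,f_0(\lambda z,\mu w)$ forces the following conditions in turn:
\begin{itemize}
\item $c\in\R$ from the constant term;
\item $\lambda,\mu\in\R$ from the $z$ and $w$ terms;
\item then $p\lambda^2$ real forces $p\in\R$, which is false.
\end{itemize}
(Equivalently, the invariant $q^2/p$ is not real, since $p$ alone already fails.) A smooth simple Harnack curve is real up to torus equivalence, so this gives the last conclusion.

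\textbf{Critical locus, as in the $w^2=R(z)$ example.}
Write $z=re^{i\theta}$ and set
\[
y_r(\theta)=\log|P(re^{i\theta})|-\log|Q(re^{i\theta})|.
\]
Critical points are exactly the zeros of $\partial_\theta y_r$. Clearing denominators turns this into a real trigonometric polynomial in $\theta$, or a polynomial $G(z)$ on the circle $|z|^2=R$ after the substitution $\bar z=R/z$. The key steps are:
\begin{enumerate}
\item Compute the discriminant $\Res_z(G,G_z)=\kappa R^7H(R)$ by exact elimination.
\item Use the Sturm sequence already recorded to show that $H$ has no positive root. Hence the angular critical points never collide for $R>0$.
\item A Sturm count at $R=1$ gives exactly two critical angles, so every regular circle carries exactly two simple critical points.
\end{enumerate}
This yields several consequences. $S_f$ is a smooth curve consisting of two sections $\theta_\pm(r)$, by the implicit function theorem, since the roots are simple. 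Each regular slice of $\mathscr A_{f_0}$ is an interval whose endpoints are the two critical values, because a function on a circle with exactly two nondegenerate critical points has them as its maximum and minimum. Exactly as in the $w^2=R(z)$ section, this gives $\mathcal C(\mathscr A_{f_0})=\partial\mathscr A_{f_0}$. The critical-value curves are graphs over $x=\log r$, hence immersed and free of self-intersections. The two graphs cannot meet at a regular radius, since there the maximum and minimum are distinct.

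The radii $|z|=1$ and $|z|=|1/q|$, and the moduli of the roots of $P$, must be treated separately. On these the slice becomes an unbounded interval, and the ends go off to infinity; this is checked in the toric charts using Newton nondegeneracy. The corollary of Section 3 then gives a smooth embedded contour.

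\textbf{Main obstacle.}
The hard part is the exact elimination producing $H$, together with justifying that two simple critical angles per circle persist uniformly. This includes behaviour near the exceptional radii, where zeros or poles lie on the circle, and the count could in principle change through roots escaping to those points rather than through collisions. I would handle this by factoring the exceptional radii out of the resultant explicitly, which accounts for the $R^7$ factor and the constant $\kappa$, and by checking local models there.
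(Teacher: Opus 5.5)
Your outline follows the paper's proof step for step. This covers the coprimality condition $P(-1/q)\neq0$ (equivalently $p-q+q^2\neq0$), the face conditions $pq(1-4p)\neq0$, and the factorization $\Res_z(G,G_z)=\kappa R^7H(R)$ with the Sturm count $8-8=0$. It also covers the two angular roots at $R=1$, the extremal graphs over $x=\log r$, and the normalization of the coefficients of $1,z,w$. Two of your supporting arguments, however, are incorrect as written, though both are easy to repair.

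\emph{The reduction to diagonal rescalings.} $\operatorname{Aut}(\Delta)$ is not trivial. The lattice map $(i,j)\mapsto(2-i-j,j)$, whose linear part has determinant $-1$, swaps $(0,0)\leftrightarrow(2,0)$ and $(0,1)\leftrightarrow(1,1)$. It therefore exchanges the two non-parallel edges, which both have lattice length one. More importantly, $\operatorname{Aut}(\Delta)$ is the wrong group to look at. A real curve torus-equivalent to $C$ may have Newton polygon $A\Delta$ (up to translation) for any $A\in\operatorname{GL}_2(\Z)$. The correct reduction, which the paper uses in its appendix, is this: a monomial automorphism $M$ commutes with coordinatewise complex conjugation and normalizes the diagonal translations. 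So every torus automorphism can be written $M\circ T$ with $T$ diagonal, and if $M(T(C))$ is real then so is $T(C)$. Your coefficient computation ($c,\lambda,\mu$ real, hence $p$ real) then finishes the argument.

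\emph{The exceptional radii.} They are not roots of the resultant, and they cannot account for the factor $R^7$. That factor vanishes only at $R=0$, $\kappa$ is a constant, and $H$ has no positive root. Hence $\kappa R^7H(R)\neq0$ at $R=|z_\pm|^2$ and at $R=|z_Q|^2=40000/149$. Nor can a root of the cleared equation escape at these radii. On $|z|^2=R$ one has $G=-z^3\operatorname{Im}\bigl(zD\,\overline{PQ}\bigr)$, a polynomial without poles, which vanishes (simply, since the resultant is nonzero) at any zero of $PQ$ lying on the circle. So the two on-circle roots of $G$ persist continuously through every exceptional radius, and one of them passes through the puncture. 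That root is not a point of $C$. The local models ($w$, respectively $1/w$, is a local coordinate with a simple zero) show that its branch leaves as a smooth tentacle. The other root gives the finite endpoint of the half-line slice, exactly as in the paper. Finally, $|z|=1$ is not an exceptional radius here (it was in the $w^2=R(z)$ example); the only exceptional radii are $|z_\pm|$ and $|z_Q|$.
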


\begin{proof}
The differences of the exponents in $\operatorname{supp}(f_0)$ contain
$(1,0)$ and $(0,1)$, so they generate $\Z^2$.  Moreover,
$p-q+q^2=-1549/40000+(313/10000)i\neq0$.  This is the resultant condition
that $P$ and $Q$ have no common zero.  Since $f_0$ is linear in $w$, it follows
that $C$ is irreducible and smooth in $(\C^*)^2$.  The exact inequality
$pq(1-4p)(p-q+q^2)\neq0$ verifies the face nondegeneracy conditions and shows
that the compactification of $C$ in $X_\Delta$ is smooth and Newton
nondegenerate.

We now prove the assertion about the logarithmic critical values.  Write
$z=re^{i\theta}$ and $R=r^2$.  The radial half-line is compactified by
$\rho=R/(1+R)\in[0,1]$, where $\rho=0$ and $\rho=1$ represent respectively
$r=0$ and $r=\infty$.  The angular circle is covered by the rational charts
$t=\tan(\theta/2)$ and $u=1/t$.  In the first chart one has
$e^{i\theta}=(1+it)/(1-it)$, while the omitted point $e^{i\theta}=-1$ is the
origin $u=0$ of the second chart.  Thus every angular calculation below can be
carried out with exact rational coefficients after denominators are cleared.

Set $D(z)=P'(z)Q(z)-Q'(z)P(z)=1-q+2pz+pqz^2$.  Along $C$, the logarithmic
image has the form
$\Log(z,w)=(\log|z|,\log|P(z)|-\log|Q(z)|)$.  On the circle $|z|=r$, its
second coordinate is critical in the angular direction precisely when
$\operatorname{Im}\{-zD(z)/(P(z)Q(z))\}=0$.  Since $\overline z=R/z$ on
$|z|^2=R$, clearing the denominator converts this equation into\\
$\di
G(z,R)=\frac{z^3}{2i}\left[-zD(z)\overline{P(R/z)Q(R/z)}
+\frac{R}{z}\overline{D(R/z)}P(z)Q(z)\right]=0.
$
The conjugation in this formula acts only on the Gaussian-rational
coefficients.  The negative powers cancel, and direct expansion gives
$G\in\Q(i)[z,R]$ with $\deg_zG=6$ and $\deg_RG=3$.  Away from $PQ=0$, this
polynomial equation is exactly the angular critical equation.

An angular critical point is ramified precisely when it is a multiple angular
root.  Since $dz/d\theta=iz\neq0$, this happens precisely when
$G(z,R)=G_z(z,R)=0$.  Exact fraction-free elimination over $\Q(i)$ gives
$\Res_z(G,G_z)=\kappa R^7H(R)$, where $\kappa\in\Q(i)^*$ and
$H\in\Z[R]$ is primitive and square-free of degree sixteen.  Its coefficients,
listed from the constant coefficient to the leading coefficient, are
\noindent\resizebox{\textwidth}{!}{$
\begin{aligned}
(&7207902987052764892578125000000000000000000000000000000000000000000000000,\\
&-2235664883774034423828125000000000000000000000000000000000000000000000000,\\
&323263372826054470938110351562500000000000000000000000000000000000000000,\\
&-28432824266929992262420959472656250000000000000000000000000000000000000,\\
&1713748656929815624576683444976806640625000000000000000000000000000000,\\
&-76258258047778730562797925381469726562500000000000000000000000000000,\\
&2585361257373668968194341952653045654296875000000000000000000000000,\\
&-62272433585494913399186022213108840942382812500000000000000000000,\\
&837773868564030759634867428063503265380859375000000000000000000,\\
&-2116593353209226767609875029366983817138671875000000000000000,\\
&2940337043539779894955964581775835813720703125000000000000,\\
&-2887072107098313538137371140290748532153320312500000000,\\
&2218827526631833128183852845408180209375000000000000,\\
&-1302026814414436234902749981718246887570312500000,\\
&534216124115740454502201243536113141600000000,\\
&-133751833089116553372095394000103147603125,\\
&15582144860896390055738008885032110741).
\end{aligned}
$}
The Euclidean algorithm gives $\gcd(H,H')=1$.  The degrees in the Sturm chain
of $H$ are $16,15,14,13,12,11,10,9,8,7,6,5,4,3,2,1,0$.  The exact signs at
$0^+$ are\\
$(+,-,+,+,-,-,+,+,+,-,-,+,+,-,-,+,+)$, and the signs at $+\infty$ are\\
$(+,+,+,-,+,+,+,-,-,+,-,-,+,-,-,-,+)$.  Both sequences have eight sign
variations.  Sturm's theorem therefore gives $N_H(0,+\infty)=8-8=0$.
Consequently, the angular-ramification system has no solution with $R>0$.
In particular, there is no positive candidate-radius interval and hence no
candidate lift requiring an interval Newton or Krawczyk test.  The empty lift
list is certified by exact root counting, which is stronger here than a
numerical exclusion.

It remains to count the simple angular roots.  At the rational sample radius
$R=1$, substitute $z=(1+it)/(1-it)$ into $G(z,1)$ and clear the factor
$(1-it)^6$.  Up to a nonzero rational constant, the resulting polynomial is
$T(t)=230308t^6-2191520620t^5-172280867t^4-4564066240t^3
-362837758t^2-2392265620t-175926583$.  Its exact Sturm chain has degrees
$6,5,4,3,2,1,0$ and isolates precisely one real root in
$(-74/1000,-73/1000)$ and one in $(9515,9516)$; the Sturm variation outside
these intervals is zero.  The second root is also isolated in the reciprocal
chart by $1/9516<u<1/9515$.  Hence the two rational charts together contain
exactly two angular critical points at $R=1$.

For each $R>0$, the roots of $G(\,\cdot\,,R)$ have the reciprocal-conjugate
symmetry $z\mapsto R/\overline z$.  A root on $|z|^2=R$ is fixed by this
involution.  Its locally unique continuation remains fixed as long as the root
is simple.  Such a root can leave the circle only by colliding with its
reciprocal-conjugate partner, and that collision would give a multiple root and
force $\Res_z(G,G_z)=0$.  Since the resultant is nonzero for every $R>0$, the
number of angular critical points is constant on the positive radial axis,
apart from the zero and pole radii that must be interpreted in toric
coordinates.  The sample count therefore proves that every nonexceptional
circle has exactly two nondegenerate angular critical points.

The exceptional points can be checked exactly.  The zeros of $P$ are
$z_\pm=(-1\pm\sqrt{1-4p})/(2p)$, and the zero of $Q$ is $z_Q=-1/q$, with
$|z_Q|^2=40000/149$.  If $P(z_0)=0$, then $P'(z_0)\neq0$ and $Q(z_0)\neq0$,
and the local expansion
$w=-(P'(z_0)/Q(z_0))(z-z_0)+O((z-z_0)^2)$ shows that $w$ is a local coordinate
with a simple zero.  Thus $\log|w|\to-\infty$ along a smooth downward
tentacle.  At $z_Q$, the reciprocal toric coordinate $v=1/w=-Q/P$ satisfies
$v=-(q/P(z_Q))(z-z_Q)+O((z-z_Q)^2)$.  Hence $v$ has a simple zero and
$\log|w|=-\log|v|\to+\infty$ along a smooth upward tentacle.  No additional
finite critical branch or ramification point occurs at these radii.

Define $Y(s,\theta)=\log|P(e^{s+i\theta})|-\log|Q(e^{s+i\theta})|$.
For every nonexceptional $s$, the function $Y(s,\,\cdot\,)$ has exactly two
nondegenerate critical points, necessarily its global minimum and global
maximum.  Their angles depend smoothly on $s$ by the implicit-function
theorem.  The corresponding critical-value branches are graphs
$s\mapsto(s,Y(s,\theta_\pm(s)))$; their first coordinate has derivative one,
so both branches are immersed and smooth.  They do not intersect because the
minimum is strictly smaller than the maximum.  The preceding toric expansions
prove smoothness at their noncompact ends.

For fixed $s$, the vertical slice of the amoeba is precisely the interval
$[\min_\theta Y(s,\theta),\max_\theta Y(s,\theta)]$.  At a zero or pole radius
this interval becomes a half-line, and its finite endpoint is the remaining
angular critical value.  Thus the two smooth extremal graphs constitute both
the complete logarithmic critical-value set and the complete amoeba boundary.
This proves $\mathcal C(\mathscr A_{f_0})=\partial\mathscr A_{f_0}$ and proves
that this common set is smooth.

Finally, after the coefficients of $1,z,w$ are normalized to be positive real,
torus reality would require the normalized coefficients $p$ and $q$ to be
real.  Both have nonzero imaginary part, so $C$ is not torus-equivalent to a
real curve.  In particular, it is not torus-equivalent to a smooth simple
Harnack curve.  Together with the primitive-support calculation, this proves
all the assertions.
\end{proof}

\begin{figure}[ht]
\centering
\includegraphics[width=0.32\textwidth]{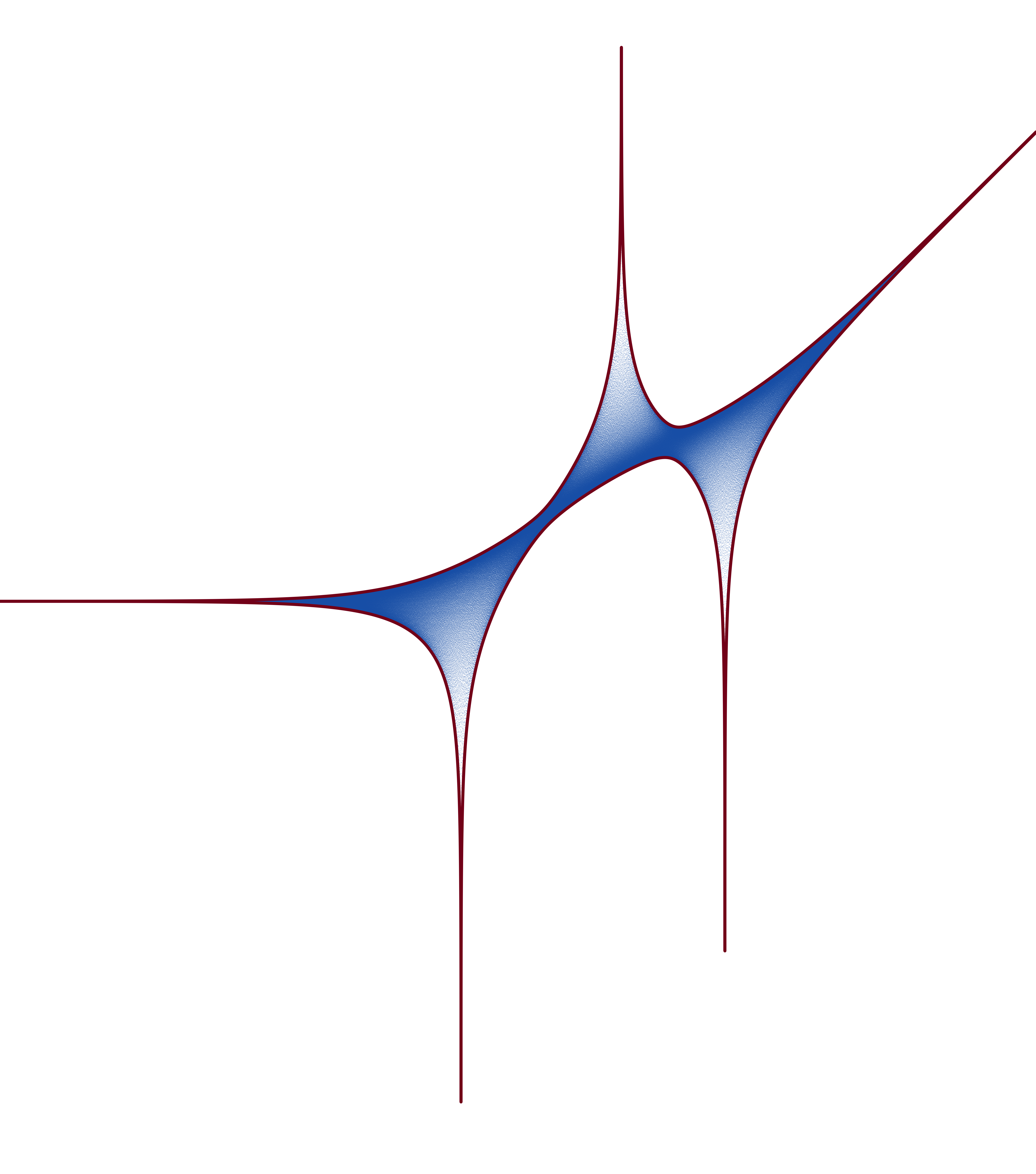}
\caption{Dark-blue amoeba and clear red logarithmic critical values of the curve defined by $f_0(z,w)=0$}
\end{figure} 

%%%%%%%%%%%%%%%%%%%%%%%%%%%%%%%%%%%%%%%%%%%%%%%%%%%%%%%%%%%%%%%%%%%%%%%%%%
%%%%%%%%%%%%%%%%%%%%%%%%%%%%%%%%%%%%%%%%%%%%%%%%%%%%%%%%%%%%%%%%%%%%%%%%%%
%%%%%%%%%%%%%%%%%%%%%%%%%%%%%%%%%%%%%%%%%%%%%%%%%%%%%%%%%%%%%%%%%%%%%%%%%%

\section{Appendix: A Complete Exponent-Lattice Computation}
 
We apply the exponent-lattice criterion to the four-point support
$A=\{(0,0),(4,0),(1,2),(0,2)\}$.  Its convex hull is the quadrilateral
$\Delta=\operatorname{conv}\{(0,0),(4,0),(1,2),(0,2)\}$.  Thus the example
has the same Newton polygon that appeared in the preceding discussion, but
only the four vertex monomials are retained.

Let
$f(z,w)=c_{00}+c_{40}z^4+c_{12}zw^2+c_{02}w^2$, where every displayed
coefficient is nonzero.  Then $\supp(f)=A$ and $\Newt(f)=\Delta$.  For a
fully numerical instance one may take
$f(z,w)=1+2z^4+3zw^2+5w^2$.  The lattice calculation is independent of the
chosen nonzero coefficient values, so we first keep the coefficients symbolic.

Choose the base exponent $a_0=(0,0)$.  The translated nonzero support vectors
are $v_1=(4,0)$, $v_2=(1,2)$ and $v_3=(0,2)$.  Placing them as columns gives
the translated exponent matrix
$$
V=\begin{pmatrix}4&1&0\\0&2&2\end{pmatrix}.
$$
The image of the homomorphism $V:\mathbb Z^3\to\mathbb Z^2$ is the
exponent-difference lattice
$L_f=\mathbb Zv_1+\mathbb Zv_2+\mathbb Zv_3$.

There are three $2\times2$ minors.  Direct calculation gives
$\det(v_1,v_2)=8$, $\det(v_1,v_3)=8$, and
$\det(v_2,v_3)=2$.  Their greatest common divisor is
$\gcd(8,8,2)=2$.  The determinant-gcd criterion therefore gives
$[\mathbb Z^2:L_f]=2$.  In particular, $L_f$ is a proper sublattice and the
exponent differences do not generate all of $\mathbb Z^2$.

The lattice can also be identified directly.  Since
$v_2-v_3=(1,0)$ and $v_3=(0,2)$, the vectors $(1,0)$ and $(0,2)$ belong to
$L_f$.  Conversely, each of $v_1$, $v_2$ and $v_3$ has even second
coordinate, so every element of $L_f$ has even second coordinate.  It follows
that
$$
L_f=\mathbb Z(1,0)\oplus\mathbb Z(0,2)
=\{(m,2n):m,n\in\mathbb Z\}.
$$
The quotient $\mathbb Z^2/L_f$ is isomorphic to $\mathbb Z/2\mathbb Z$ via
the parity of the second coordinate, which confirms the index computation.

We now compute the Smith normal form explicitly.  Starting with $V$, interchange
the first two columns, replace the second column by the second column minus
four times the first, replace the second row by the second row minus twice the
first row, interchange the second and third columns, and finally replace the
third column by the third column plus four times the second.  The successive
matrices are
$$
\begin{aligned}
\begin{pmatrix}4&1&0\\0&2&2\end{pmatrix}
&\longmapsto
\begin{pmatrix}1&4&0\\2&0&2\end{pmatrix}
\longmapsto
\begin{pmatrix}1&0&0\\2&-8&2\end{pmatrix}\\
&\longmapsto
\begin{pmatrix}1&0&0\\0&-8&2\end{pmatrix}
\longmapsto
\begin{pmatrix}1&0&0\\0&2&-8\end{pmatrix}
\longmapsto
\begin{pmatrix}1&0&0\\0&2&0\end{pmatrix}.
\end{aligned}
$$
Every operation is unimodular over $\mathbb Z$.  Thus the final matrix is the
Smith normal form, and its invariant factors are $d_1=1$ and $d_2=2$.  Their
product is $2$, again equal to the index and to the gcd of the maximal minors.

One can record the whole reduction in a single matrix identity.  Define
$$
U=\begin{pmatrix}1&0\\-2&1\end{pmatrix},
\qquad
W=\begin{pmatrix}0&0&1\\1&0&-4\\0&1&4\end{pmatrix}.
$$
Both matrices are unimodular: $\det U=1$ and $\det W=1$.  A direct
multiplication gives
$$
UVW=\begin{pmatrix}1&0&0\\0&2&0\end{pmatrix}.
$$
This identity supplies an exact certificate of the Smith normal form.

The basis $m_1=(1,0)$, $m_2=(0,2)$ of $L_f$ gives the exponent matrix
$M=\operatorname{diag}(1,2)$.  Its determinant is $2$, and the corresponding
finite monomial map is
$\Phi_M:(\mathbb C^*)^2\to(\mathbb C^*)^2$,
$\Phi_M(z,w)=(z,w^2)$.  Its kernel is
$\{(1,1),(1,-1)\}$, so it is a covering of degree two.

To construct the reduced polynomial, write each translated exponent in the
basis $m_1,m_2$.  The identities
$(4,0)=4m_1$, $(1,2)=m_1+m_2$, and $(0,2)=m_2$ show that the corresponding
reduced exponents are $(4,0)$, $(1,1)$ and $(0,1)$.  Hence
$$
h(u,v)=c_{00}+c_{40}u^4+c_{12}uv+c_{02}v.
$$
Substitution gives the exact equality
$$
h\bigl(\Phi_M(z,w)\bigr)
=h(z,w^2)
=c_{00}+c_{40}z^4+c_{12}zw^2+c_{02}w^2
=f(z,w).
$$
Thus $V(f)=\Phi_M^{-1}(V(h))$.  For the numerical coefficients chosen above,
the reduced Laurent polynomial is $h(u,v)=1+2u^4+3uv+5v$, and
$f(z,w)=h(z,w^2)$.

The reduction is maximal.  Indeed, the support of $h$ is
$B=\{(0,0),(4,0),(1,1),(0,1)\}$.  Relative to $(0,0)$, the determinants are
$4$, $4$ and $1$, because
$\det((1,1),(0,1))=1$.  Their gcd is one, so the exponent-difference lattice
of $h$ is all of $\mathbb Z^2$.  Therefore no further finite monomial covering
is forced by the positions of the exponents of $h$.

The Newton polygons behave compatibly with the covering.  If
$\Delta_h=\operatorname{conv}(B)$, then $\Delta=M\Delta_h$.  Since
$|\det M|=2$, Euclidean areas satisfy
$\operatorname{Area}(\Delta)=2\operatorname{Area}(\Delta_h)$.  In the present
case $\operatorname{Area}(\Delta)=5$ and
$\operatorname{Area}(\Delta_h)=5/2$, which provides an additional numerical
check of the determinant.

\begin{proposition}
For $f(z,w)=c_{00}+c_{40}z^4+c_{12}zw^2+c_{02}w^2$ with all coefficients
nonzero, the exponent-difference lattice has index two.  Its Smith normal form
has invariant factors $1$ and $2$, and the polynomial is the pullback of
$h(u,v)=c_{00}+c_{40}u^4+c_{12}uv+c_{02}v$ through the degree-two monomial
covering $(z,w)\mapsto(z,w^2)$.  The reduced polynomial $h$ has primitive
exponent-difference lattice.
\end{proposition}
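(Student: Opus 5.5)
The proof applies the general proposition on exponent lattices to the specific support $A=\{(0,0),(4,0),(1,2),(0,2)\}$, using the computations already made in the appendix.

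First, take base exponent $a_0=(0,0)$ and form the translated exponent matrix $V$ with columns $v_1=(4,0)$, $v_2=(1,2)$ and $v_3=(0,2)$. Its $2\times2$ minors are $8$, $8$ and $2$. By the determinant-gcd criterion of the preceding proposition,
$$
[\mathbb Z^2:L_f]=\gcd(8,8,2)=2.
$$
This depends only on the support, and the support is exactly $A$ because all four coefficients are assumed nonzero.

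Next, I will certify the Smith normal form by the explicit identity $UVW=\begin{pmatrix}1&0&0\\0&2&0\end{pmatrix}$ with the unimodular matrices $U$ and $W$ displayed above. This gives invariant factors $d_1=1$ and $d_2=2$. Their product is $2$, consistent with the index computation.

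For the factorization, I identify $L_f=\{(m,2n)\}$ directly. The vectors $(1,0)=v_2-v_3$ and $(0,2)=v_3$ lie in $L_f$, and every generator has even second coordinate. So $m_1=(1,0)$ and $m_2=(0,2)$ form a basis, and $M=\operatorname{diag}(1,2)$. The construction in the proof of the proposition then produces $h$ with reduced exponents $(0,0)$, $(4,0)$, $(1,1)$ and $(0,1)$. The substitution $h(z,w^2)=f(z,w)$ is then checked term by term. The covering $\Phi_M(z,w)=(z,w^2)$ has kernel $\{(1,\pm1)\}$, so it has degree $|\det M|=2$.

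Finally, for primitivity of $h$, the support of $h$ contains $(1,1)$ and $(0,1)$. Since $\det((1,1),(0,1))=1$, the gcd criterion gives $L_h=\mathbb Z^2$.

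None of these steps is a genuine obstacle; the only care needed is to note that the conclusions hold for arbitrary nonzero coefficients, since every step uses only the support.
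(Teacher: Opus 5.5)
Your proposal is correct and follows essentially the same route as the paper. Both compute the minors $8,8,2$ with gcd $2$, certify the Smith normal form by the explicit identity $UVW=\begin{pmatrix}1&0&0\\0&2&0\end{pmatrix}$, use the basis $(1,0),(0,2)$ to get $M=\operatorname{diag}(1,2)$ and check $h(z,w^2)=f$ term by term, and conclude primitivity of $h$ from $\det((1,1),(0,1))=1$, using only the support so that the result holds for arbitrary nonzero coefficients.
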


This example also shows why the actual support matters.  The vertex support
of $\Delta$ lies in the proper affine sublattice with even second coordinate.
If one adds the monomial $w$, whose exponent is $(0,1)$, the enlarged support
immediately generates $\mathbb Z^2$, the determinant gcd becomes one, and the
degree-two covering forced by the exponent positions disappears.

\begin{figure}[ht]
\centering
\includegraphics[width=0.32\textwidth]{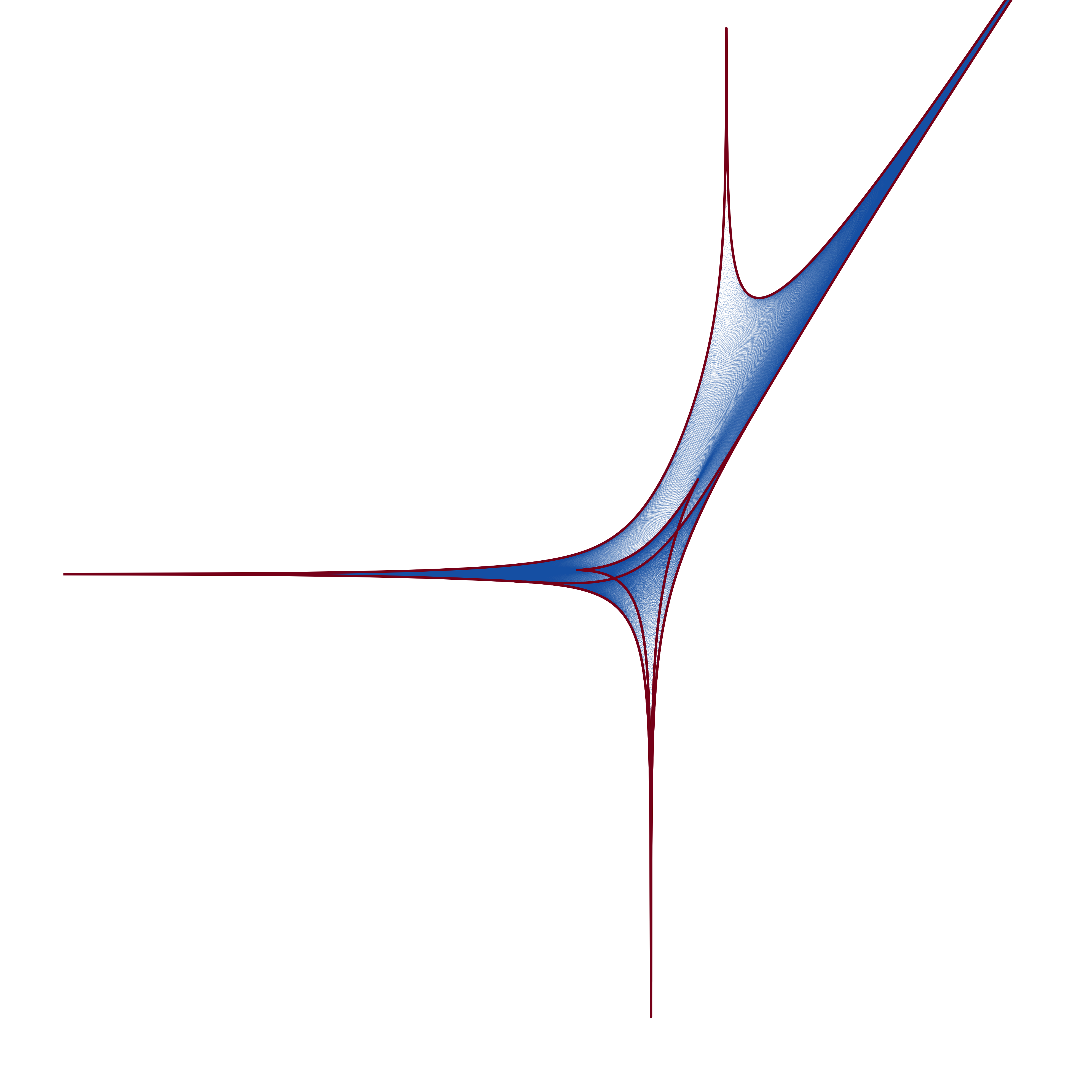}
\caption{Dark-blue amoeba and clear red logarithmic critical values of the curve defined by $f(z,w)=1+2z^4+3zw^2+5w^2$..}
\end{figure}

%%%%%%%%%%%%%%%%%%%%%%%%%%%%%%%%%%%%%%%%%%%%%%%%%%%%%%%%%%%%%%%%%%%%%%%%%

\section{Appendix: A Non-real Curve with Boundary-Only Amoeba Contour}

Consider the rational complex numbers
$p=1/100-i/5000$ and $q=1/20-7i/200$, and define
$$
f(z,w)=1+z+pz^2+w+qzw.
$$
Its support is the complete lattice support of $\Delta$.  In particular, the
support differences contain $(1,0)$ and $(0,1)$ and therefore generate
$\mathbb Z^2$.  The exponent-difference lattice has index one, so the curve is
not a pullback through a finite monomial covering for a support-lattice reason.

The main result is the following.

\begin{theorem}
Let $C=V(f)\subset(\mathbb C^*)^2$ for the polynomial above.  Then $C$ is
irreducible and smooth, its compactification in $X_\Delta$ is smooth and Newton
nondegenerate, and the logarithmic critical locus is smooth.  Moreover,
$\mathcal C(\mathscr A_f)=\partial\mathscr A_f$, and
$\Log|_C$ is at most two-to-one.  Nevertheless, $C$ is not real up to any
torus translation and hence is not torus-equivalent to a smooth simple Harnack
curve.
\end{theorem}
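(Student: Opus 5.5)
This statement is essentially Theorem~\ref{thm:main}, restated for the same polynomial $f=f_0$ with one additional clause: that $\Log|_C$ is at most two-to-one. The plan is therefore to reuse the exact proof of Theorem~\ref{thm:main} for everything except that clause, and to derive the two-to-one property from the angular analysis already carried out there.

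\textbf{Inherited assertions.} These need no new argument beyond Theorem~\ref{thm:main}:
\begin{itemize}
\item the primitive support, since the differences contain $(1,0)$ and $(0,1)$;
\item irreducibility and smoothness, since $f=P+Qw$ is linear in $w$ and $p-q+q^2\neq0$;
\item Newton nondegeneracy, via the exact inequality $pq(1-4p)(p-q+q^2)\neq0$;
\item smoothness of the critical locus and of the contour, and the equality $\mathcal C(\mathscr A_f)=\partial\mathscr A_f$, via the resultant factorization $\Res_z(G,G_z)=\kappa R^7H(R)$, the Sturm count showing that $H$ has no positive root, and the sample count of exactly two angular critical points at $R=1$;
\item non-reality up to torus translation, since after normalizing the coefficients of $1,z,w$ to be positive real, the remaining coefficients $p$ and $q$ still have nonzero imaginary part.
\end{itemize}

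\textbf{New step: $\Log|_C$ is at most two-to-one.} Use $z$ as a global coordinate on $C$, which is valid because $w=-P(z)/Q(z)$ and $Q$ does not vanish on $C$. A fiber of $\Log|_C$ over $(s,y)$ is then the set of angles $\theta$ with $Y(s,\theta)=y$, where $Y(s,\theta)=\log|P(e^{s+i\theta})|-\log|Q(e^{s+i\theta})|$. The proof of Theorem~\ref{thm:main} shows that for every nonexceptional $s$ the function $Y(s,\cdot)$ is a smooth function on the circle with exactly two critical points, both nondegenerate. Such a function is strictly monotone on each of the two arcs between its minimum and maximum. Hence every value is attained at most twice, and the extreme values exactly once.

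\textbf{Exceptional radii.} The circle $|z|=r$ is exceptional when it passes through a zero $z_\pm$ of $P$ or the zero $z_Q$ of $Q$.
\begin{itemize}
\item First rule out coincidences: $|z_+|$, $|z_-|$ and $|z_Q|$ should be pairwise distinct. I would check this exactly from $|z_Q|^2=40000/149$ and the explicit formula for $z_\pm$.
\item On a circle through exactly one such point, $Y$ is a smooth function on the punctured circle that tends to $-\infty$ at a zero of $P$, or to $+\infty$ at $z_Q$. The count of critical points on such a circle comes from continuity of the roots of $G$, which stays valid in toric coordinates as shown there. A single remaining nondegenerate extremum then gives at most two preimages of each finite value.
\end{itemize}

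\textbf{Main obstacle.} The delicate point is the exceptional radii: the critical-point count there must be justified by a limiting or continuity argument, not by the nonvanishing of the resultant. I would handle it using the local toric expansions already given in the proof of Theorem~\ref{thm:main}. These show that near a zero or pole $Y$ behaves like $\mp\log|z-z_0|$ plus an analytic function, and so contributes no extra extremum nearby. Away from such a neighbourhood, the critical count persists from nearby regular circles, because the roots of $G$ remain simple for all $R>0$.
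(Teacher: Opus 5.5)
Your proposal is correct, but it obtains the key fact (exactly two simple angular critical points on every circle $|z|=r$) from a different certificate than the one the paper uses for this theorem.

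You import that fact from the proof of Theorem~\ref{thm:main}. There it rests on four ingredients: the factorization $\Res_z(G,G_z)=\kappa R^7H(R)$, the Sturm count excluding positive roots of $H$, the symmetry $z\mapsto R/\overline z$ that keeps simple roots on the circle, and the sample count at $R=1$. The paper's proof here argues differently. It expands the cleared equation $N_r(\theta)=\operatorname{Im}(A\overline B)$ into the harmonics $C_0,\dots,C_3$. It then proves the dominance lemma: $B_1<A_1$ and $D_1^2<A_1^2-B_1^2$ force exactly two simple zeros. Finally it certifies both inequalities for all $r$ at once by exact rational interval arithmetic in $t=r/(1+r)\in[0,1]$.

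Everything downstream is the same in both: monotonicity on the two complementary arcs, interval slices, $\mathcal C(\mathscr A_f)=\partial\mathscr A_f$, at most two points per fiber, the implicit-function argument for the critical locus, and the phase argument against torus reality. Your route needs no new computation and rests on a purely algebraic integer certificate, at the price of a continuation argument and a sample radius. The paper's route is pointwise in $r$ and handles both ends and the exceptional radii uniformly. It also yields simplicity directly and is stable under perturbation of $(p,q)$, which the next appendix exploits to produce an open family.

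For the exceptional radii you do not need a separate limiting argument, and it is not true that the resultant is unavailable there. On $|z|^2=R$ one has $G=-z^3\operatorname{Im}\bigl(zD\,\overline{PQ}\bigr)$, so a zero of $P$ or $Q$ lying on the circle is itself a root of $G$. Since $\Res_z(G,G_z)\neq0$ for every $R>0$, such a circle still carries exactly two simple roots of $G$. One of them is the puncture, which leaves exactly one genuine critical point; this is the paper's ``limiting zero of the cleared equation.''

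Accordingly, read your phrase that the puncture ``contributes no extra extremum'' with care. On nearby regular circles, the global minimum (near $z_\pm$) or global maximum (near $z_Q$) is precisely the root that disappears into the puncture. Your check that $|z_+|\approx1.01$, $|z_Q|\approx16.4$ and $|z_-|\approx99$ are pairwise distinct is harmless.
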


\begin{proof}
Write $P(z)=1+z+pz^2$ and $Q(z)=1+qz$.  The curve is the graph
$w=-P(z)/Q(z)$.  A common zero of $P$ and $Q$ exists exactly when
$p+q^2-q=0$.  For the chosen coefficients one has
$p+q^2-q=(-1549+1252i)/40000\ne0$.  Thus $P$ and $Q$ are relatively prime,
and the polynomial $P+Qw$, which is primitive and linear in $w$, is
irreducible.  Since $f_w=Q$, a torus singularity would force $P=Q=0$, which has
just been excluded.  Hence $C$ is smooth.

The bottom edge polynomial is $P$, whose discriminant is
$1-4p=0.96+0.0008i\ne0$.  The remaining edge polynomials are nonzero monomials
or binomials because $p,q\ne0$.  Every edge truncation is therefore
nondegenerate.  The two-dimensional truncation is $f$ itself and is
nondegenerate because $C$ is smooth in the torus.  The polygon $\Delta$ is a
smooth lattice polygon, so its toric surface is smooth.  It follows that the
compactification of $C$ in $X_\Delta$ is smooth and Newton nondegenerate.

It remains to study the logarithmic map.  Put $z=re^{i\theta}$ and
$Y_r(\theta)=\log|P(re^{i\theta})/Q(re^{i\theta})|$.  The logarithmic map is
$F(r,\theta)=(\log r,Y_r(\theta))$.  Its critical equation is
$Y_r'(\theta)=0$, or, away from zeros and poles,
$\operatorname{Im}(z(P'/P-Q'/Q))=0$.  Multiplication by the positive factor
$|PQ|^2$ gives the cleared real equation
$N_r(\theta)=\operatorname{Im}(A(z)\overline{B(z)})=0$, where
$A(z)=z(P'Q-PQ')=(1-q)z+2pz^2+pqz^3$ and
$B(z)=P(z)Q(z)=1+(1+q)z+(p+q)z^2+pqz^3$.

The exact rational interval certificate proved below shows that $N_r$ has
exactly two simple zeros modulo $2\pi$ for every $r>0$.  On a circle containing
neither a zero of $P$ nor a zero of $Q$, these are precisely the two critical
points of $Y_r$.  The function $Y_r$ is therefore strictly monotone on each of
the two complementary arcs and has one global maximum and one global minimum.
Its image is the interval between them, every interior value has exactly two
preimages, and its two critical values are exactly the endpoints of the
vertical amoeba slice.

If a circle contains a zero of $P$ or $Q$, the corresponding zero of the
cleared equation is a limiting zero at which $Y_r$ tends to $-\infty$ or
$+\infty$.  Relative primeness excludes a simultaneous zero.  Removing that
puncture leaves the same monotonic two-arc description by continuity from
nearby radii.  Every finite logarithmic fiber still has at most two points, and
the finite endpoint of the slice is the image of the remaining critical point.
It follows for all radii that $\Log|_C$ is at most two-to-one and that the
complete finite critical-value set is exactly $\partial\mathscr A_f$.

At every actual critical point, $PQ\ne0$.  Simplicity of the corresponding
zero of $N_r$ gives $\partial N_r/\partial\theta\ne0$, and division by
$|PQ|^2$ gives $\partial Y_r'/\partial\theta\ne0$.  The implicit-function
theorem therefore makes the logarithmic critical locus a smooth real
one-dimensional submanifold of $C$.  This proves all the geometric assertions.

It remains to exclude torus-reality.  The polynomial is already normalized so
that the coefficients of $1,z,w$ are equal to $1$.  Suppose multiplication of
the equation by $\lambda\in\mathbb C^*$ and substitutions
$z\mapsto\alpha z$, $w\mapsto\beta w$ made every coefficient real.  Reality of
the coefficients of $1,z,w$ forces the arguments of $\lambda$, $\alpha$, and
$\beta$ to belong to $\pi\mathbb Z$.  Reality of the remaining two
coefficients would then force $p,q\in\mathbb R$.  This is impossible because
$\operatorname{Im}p=-1/5000$ and $\operatorname{Im}q=-7/200$.  Hence the curve
is not real up to torus translation.  Integral unimodular monomial changes
commute with complex conjugation, so allowing such a torus automorphism cannot
create a real structure that was absent before.  Since a smooth simple Harnack
curve is real, $C$ cannot be torus-equivalent to one.
\end{proof}

Write $N_r(\theta)=C_0(r)+2\operatorname{Re}(C_1(r)e^{i\theta}+
C_2(r)e^{2i\theta}+C_3(r)e^{3i\theta})$.  Direct expansion gives a common
positive factor $r$.  After dividing it out, the four coefficients are

$$
\begin{aligned}
C_0/r={}&\frac7{100}r+\frac{17}{25000}r^3,\\
C_1/r={}&\left(\frac7{400}-\frac{19}{40}i\right)
+\left(\frac{1791}{100000}+\frac{17377}{1000000}i\right)r^2\\
&+\left(\frac{6891}{5000000000}-\frac{4031}{250000000}i\right)r^4,\\
C_2/r={}&\left(-\frac1{5000}-\frac1{100}i\right)r
+\left(-\frac{149}{200000000}-\frac{149}{4000000}i\right)r^3,\\
C_3/r={}&\left(-\frac9{50000}-\frac{493}{2000000}i\right)r^2.
\end{aligned}
$$
No decimal approximation occurs in these expressions.

Put $A_1(r)=2|C_1(r)|$, $B_1(r)=|C_0(r)|+2|C_2(r)|+2|C_3(r)|$, and
$D_1(r)=4|C_2(r)|+6|C_3(r)|$.  The following elementary lemma is the analytic
core of the certificate.

\begin{lemma}
If $B_1(r)<A_1(r)$ and
$D_1(r)^2<A_1(r)^2-B_1(r)^2$, then $N_r$ has exactly two simple zeros modulo
$2\pi$.
\end{lemma}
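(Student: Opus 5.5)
The plan is to treat $N_r$ as a dominant first harmonic plus a perturbation. I will control the perturbation in $C^0$ by $B_1$ and in $C^1$ by $D_1$, and then use monotonicity. Fix $r$ and drop it from the notation. Write $C_1=|C_1|e^{i\varphi}$ and $\psi=\theta+\varphi$. Then
$$
N(\theta)=A_1\cos\psi+E(\theta),\qquad
E(\theta)=C_0+2\operatorname{Re}\bigl(C_2e^{2i\theta}+C_3e^{3i\theta}\bigr).
$$
Here $C_0$ is real, since $N$ is real. The triangle inequality gives $|E(\theta)|\le B_1$ for all $\theta$. Differentiating term by term gives $E'(\theta)=-2\operatorname{Im}\bigl(2C_2e^{2i\theta}+3C_3e^{3i\theta}\bigr)$, so $|E'(\theta)|\le 4|C_2|+6|C_3|=D_1$.

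First, I locate all possible zeros. If $N(\theta)=0$, then $A_1|\cos\psi|=|E|\le B_1$, so $|\cos\psi|\le B_1/A_1<1$. The set $K=\{\psi:|\cos\psi|\le B_1/A_1\}$ is the union of two disjoint closed arcs, $I_+$ around $\psi=\pi/2$ and $I_-$ around $\psi=3\pi/2$. On the complement of $K$ one has $A_1|\cos\psi|>B_1\ge|E|$. Hence $N\ne0$ there, and $N$ has the sign of $\cos\psi$.

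Second, I prove strict monotonicity on each arc. On $K$ we have $A_1|\sin\psi|\ge\sqrt{A_1^2-B_1^2}$. The second hypothesis then gives $\sqrt{A_1^2-B_1^2}>D_1\ge|E'|$. Since $N'=-A_1\sin\psi+E'$, it follows that $N'\ne0$ on $K$, with the sign of $-\sin\psi$. So $N$ is strictly decreasing on $I_+$ and strictly increasing on $I_-$. Every zero of $N$ is therefore simple, and each arc contains at most one zero.

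Third, I show existence by the intermediate value theorem. At the endpoint of $I_+$ where $\cos\psi=B_1/A_1$, we get $N=B_1+E\ge0$. At the endpoint where $\cos\psi=-B_1/A_1$, we get $N=-B_1+E\le0$. A continuous strictly monotone function with these endpoint signs has exactly one zero on the closed arc. The same argument applies to $I_-$ with the signs reversed. Combining the three steps, $N$ has exactly two zeros modulo $2\pi$, and both are simple.

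The only delicate point is that these endpoint inequalities may be equalities. The argument handles this because it uses non-strict endpoint signs, together with strict monotonicity on the closed arcs. Strict monotonicity holds there because the bound $A_1|\sin\psi|\ge\sqrt{A_1^2-B_1^2}$ is valid on all of $K$, endpoints included.
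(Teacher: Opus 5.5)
Your proof is correct and follows the same route as the paper. You rotate so the first harmonic is isolated, use $|E|\le B_1$ to confine all zeros to two disjoint arcs, use $|E'|\le D_1<\sqrt{A_1^2-B_1^2}$ to fix the sign of the derivative there, and conclude with the intermediate value theorem; you are slightly more careful than the paper in proving strict monotonicity on the whole closed arcs rather than only a fixed crossing direction at zeros, and in explicitly allowing zeros at the arc endpoints.
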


\begin{proof}
After translating $\theta$, the first harmonic can be written
$A_1\sin\theta$.  Let $G$ be the sum of the constant, second, and third
harmonics.  Then $|G|\leq B_1$ and $|G'|\leq D_1$.  A zero of
$A_1\sin\theta+G(\theta)$ must satisfy
$|\sin\theta|\leq B_1/A_1$.  Hence every zero lies in one of two disjoint
intervals centered at $0$ and $\pi$.  At any zero, the absolute derivative of
the first harmonic is at least $\sqrt{A_1^2-B_1^2}$, which is strictly larger
than $|G'|$.  The full derivative has a fixed nonzero sign at every zero in
each of the two intervals.  The endpoint signs give existence, and the fixed
crossing direction gives uniqueness.  Thus there is one simple zero in each
interval and no other zero.
\end{proof}

To certify these inequalities simultaneously for all $0<r<\infty$, set
$t=r/(1+r)$.  Then $0<t<1$ and $r=t/(1-t)$.  For each coefficient after
division by $r$, define
$\widehat C_k(t)=(1-t)^4(C_k(r)/r)$.  Every $\widehat C_k$ is a complex
polynomial of degree at most four with rational coefficients.  Multiplication
by the common positive factor $(1-t)^4/r$ leaves all homogeneous dominance
inequalities unchanged.  The two infinite logarithmic tails are thereby
included as the endpoints $t=0$ and $t=1$ of one compact interval.

The accompanying verifier evaluates the real and imaginary parts of the
$\widehat C_k$ by exact rational interval Horner arithmetic.  If a complex
interval is $[a,b]+i[c,d]$, it bounds its squared modulus below by the squared
distance of each real interval from zero and above by
$\max(a^2,b^2)+\max(c^2,d^2)$.  Square-root endpoints are enclosed by dyadic
rationals with denominator $2^{100}$, computed using integer square roots.
Thus every lower bound is rounded downward and every upper bound upward; no
binary floating-point operation participates in a certification decision.

On each dyadic interval $I\subset[0,1]$, the verifier computes a lower bound
$A_I$ for $A_1$, upper bounds $B_I,D_I$ for $B_1,D_1$, and accepts $I$ only
when $A_I-B_I>0$ and $A_I^2-B_I^2-D_I^2>0$.  Otherwise it bisects $I$.  The
algorithm terminated with $9165$ adjacent dyadic intervals covering the whole
closed interval $[0,1]$; the maximum subdivision depth was $19$.  The smallest
accepted rational margins were
$$
\begin{aligned}
A_I-B_I&\geq
\frac{30713972841850102093004849}
{198070406285660843983859875840000}>0,\\
A_I^2-B_I^2-D_I^2&\geq
\frac{7971763195456231183593994208654509538545235061079}
{6277101735386680763835789423207666416102355444464034512896000000}>0.
\end{aligned}
$$
Their approximate values, used only for readability, are $1.55065\times10^{-7}$
and $1.26997\times10^{-15}$.  

The strict inequalities at every $t\in[0,1]$ prove the hypotheses of the lemma
for every $r>0$.  This supplies the missing exact statement in the preceding
investigation and upgrades the rational candidate to a certified
counterexample.  It follows that, even for a primitive polygon and a smooth
Newton-nondegenerate curve with smooth critical locus, the equality
$\mathcal C(\mathscr A_f)=\partial\mathscr A_f$ and the at-most-two-to-one
property do not force torus-reality.  Consequently they do not force the curve
to be torus-equivalent to a smooth simple Harnack curve.

There is no conflict with the maximal-area characterization of Mikhalkin and
Rullg\aa rd \cite{MikhalkinRullgard2001}.  Maximal area forces a compatible real
structure in addition to the logarithmic multiplicity condition.  Since the
present curve is not torus-real, its amoeba is not maximal-area.  The example
therefore separates boundary--contour equality and two-sheetedness from
maximal amoeba area.

The next useful computation is the certified amoeba area of this example and a
comparison with $\pi^2\Area(\Delta)$.  One can also derive the open coefficient
region around $(p,q)$ on which the two strict harmonic inequalities persist;
this would produce a full family of nonreal counterexamples rather than one
rational point.

%%%%%%%%%%%%%%%%%%%%%%%%%%%%%%%%%%%%%%%%%%%%%%%%%%%%%%%%%%%%%%%%%%%%%%%%%

\section{Appendix: An Explicit Open Family of Non-real Curves with Boundary-Only Amoeba Contours}

For the primitive non-zonotopal trapezoid
$\Delta=\operatorname{conv}\{(0,0),(2,0),(1,1),(0,1)\}$, an explicit
four-real-dimensional open coefficient neighborhood is constructed in which
every curve is smooth, irreducible, and Newton nondegenerate, while its amoeba
contour is smooth and equals the boundary of the amoeba.  Every member of the
neighborhood has nonreal normalized coefficients and is not torus-equivalent
to a simple Harnack curve.

Consider the normalized five-term family
$f_{p,q}(z,w)=1+z+pz^2+w+qzw$, where $p,q\in\C^{*}$.  Its Newton polygon is
$\Delta=\operatorname{conv}\{(0,0),(2,0),(1,1),(0,1)\}$.  This polygon is a
noncentrally symmetric quadrilateral, has Euclidean area $3/2$, and is not a
zonotope.  Put
$p_0=1/100-i/5000$, $q_0=1/20-7i/200$, and
$\varepsilon=3/50000000$.  Let $\overline U_\varepsilon$ be the closed box
defined by
$$
 |\operatorname{Re}p-1/100|\leq\varepsilon,\qquad
 |\operatorname{Im}p+1/5000|\leq\varepsilon,
 \qquad |\operatorname{Re}q-1/20|\leq\varepsilon,\qquad
 |\operatorname{Im}q+7/200|\leq\varepsilon,
$$
and denote its interior by $U_\varepsilon$.

\begin{theorem}
For every $(p,q)\in\overline U_\varepsilon$, the affine curve
$C_{p,q}=V(f_{p,q})\subset(\C^{*})^2$ is irreducible and smooth, its toric
compactification in $X_\Delta$ is smooth and Newton nondegenerate, and its
logarithmic critical locus is a smooth real one-dimensional manifold.  The
amoeba contour satisfies
$\operatorname{Contour}(\mathscr{A})_{p,q}=\partial\mathscr{A}_{p,q}$ and is a smooth embedded
one-dimensional submanifold of $\mathbb R^2$.  The restriction
$\Log|_{C_{p,q}}$ has at most two points in every fiber.  Moreover, no curve
$C_{p,q}$ in this box is torus-equivalent to a simple Harnack curve.  Hence
$U_\varepsilon$ is an explicit open four-real-dimensional family of nonreal
counterexamples to the assertion that boundary--contour equality forces
torus equivalence to a simple Harnack curve.
\end{theorem}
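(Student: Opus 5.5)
The plan is to extend the single-point certificate of the preceding appendix uniformly over the box $\overline U_\varepsilon$. The geometric deductions there used only three inputs: relative primeness of $P$ and $Q$, edge nondegeneracy, and the harmonic dominance inequalities of the Lemma on $N_r$. So it suffices to show that each input holds for all $(p,q)\in\overline U_\varepsilon$, and then rerun the argument verbatim.

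The algebraic conditions come first, and they are routine. The quantity $p+q^2-q$ is a polynomial in $(p,q)$. Its value at $(p_0,q_0)$ has modulus about $5\cdot10^{-2}$, while on the box its variation is bounded by $|\partial_p|+|\partial_q|$ times $2\varepsilon$, which is about $10^{-7}$. Hence it stays nonzero, which gives irreducibility and smoothness in the torus exactly as before. Likewise $1-4p$ stays near $0.96$, and $p,q$ stay nonzero. This yields edge nondegeneracy and a smooth Newton-nondegenerate compactification, since $\Delta$ is a smooth polygon.

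For non-Harnackness I will use the normalization argument of the appendix theorem. The coefficients of $1,z,w$ are already $1$, so torus-reality would force $p,q\in\mathbb R$. But $\operatorname{Im}p\le -1/5000+\varepsilon<0$ on the box, so this fails.

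The main step, and the expected obstacle, is the uniform dominance certificate. The coefficients $C_k(r)/r$ are polynomials in $r$ whose coefficients are polynomials in $(\operatorname{Re}p,\operatorname{Im}p,\operatorname{Re}q,\operatorname{Im}q)$ of degree at most two. After the substitution $t=r/(1+r)$, each $\widehat C_k(t)$ depends polynomially on $(t,p,q)$. I will bound the perturbation $|\widehat C_k(t;p,q)-\widehat C_k(t;p_0,q_0)|$ uniformly for $t\in[0,1]$ by $K_k\varepsilon$, where $K_k$ is an explicit rational constant. This constant comes from summing absolute values of the coefficients of the $(p,q)$-derivatives over a slightly enlarged box, using the mean value inequality. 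Then $A_1$ moves by at most $2K_1\varepsilon$, $B_1$ by at most $K_0+2K_2+2K_3$ times $\varepsilon$, and $D_1$ similarly. The smallest certified margins at $(p_0,q_0)$ are about $1.55\times10^{-7}$ for $A-B$ and $1.27\times10^{-15}$ for $A^2-B^2-D^2$. With $\varepsilon=6\times10^{-8}$, the second margin is clearly too small for a crude global perturbation bound. So I would instead rerun the verifier with interval arithmetic over the product $I\times\overline U_\varepsilon$ on each dyadic $t$-interval, and subdivide further where needed. Alternatively, I would note that the dangerous intervals, where the margin is tiny, are the ends $t\to0,1$. There $C_0,C_2,C_3$ vanish to higher order relative to $C_1$, so the margins scale homogeneously, and a relative rather than absolute perturbation bound suffices. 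This is where I expect the real work.

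Once dominance holds on all of $[0,1]\times\overline U_\varepsilon$, the Lemma gives exactly two simple zeros of $N_r$ for every $r>0$ and every parameter. The earlier argument then applies without change. $Y_r$ has one maximum and one minimum, so fibers have at most two points and the critical values are exactly the slice endpoints, giving contour $=\partial\mathscr A$. The implicit function theorem gives a smooth critical locus. The two extremal graphs over $s=\log r$ are disjoint immersed graphs with toric-smooth ends, which gives a smooth embedded contour. Openness of $U_\varepsilon$ in $\mathbb C^2\cong\mathbb R^4$ completes the statement.
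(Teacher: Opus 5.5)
Your plan follows the paper's proof essentially step for step: nonvanishing of $p$, $q$, $1-4p$ and $p+q^2-q$ on the box (the paper uses exact interval evaluation where you use a mean-value bound), the normalization argument against torus-reality, and an outward rational interval-arithmetic certificate of $A_1>B_1$ and $A_1^2-B_1^2>D_1^2$ on dyadic $t$-intervals crossed with $\overline U_\varepsilon$ (the paper reports $37680$ intervals, depth $23$), after which the single-point geometric argument is reused unchanged. Your side remarks are off and should not be relied on: the quoted $1.27\times10^{-15}$ is only the smallest accepted interval lower bound, not the true margin, and the ends are not the delicate region (at $t=0$ both margins are of order one), since the tight spot is interior near $r\approx 40$, where $B_1/A_1\approx 3/4$ and the other harmonics are far from negligible, so the ``higher-order vanishing at the ends'' alternative would not by itself close the argument, though your main route does not depend on it.
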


\begin{proof}
Write $P(z)=1+z+pz^2$ and $Q(z)=1+qz$.  The equation of the curve is
$P(z)+Q(z)w=0$, so away from the zero of $Q$ it is the graph
$w=-P(z)/Q(z)$.  A singular point in $(\C^{*})^2$ would satisfy
$f=f_w=0$.  Since $f_w=Q$, such a point would require $P$ and $Q$ to have a
common zero.  Their resultant, up to a nonzero scalar, is
$p+q^2-q$.  Consequently $p+q^2-q\neq0$ proves smoothness in the torus.
The same expression also excludes a common factor of $P$ and $Q$; because
$f$ is linear in $w$, it therefore proves irreducibility.

The face polynomial on the horizontal edge from $(0,0)$ to $(2,0)$ is
$P(z)$.  Its roots are distinct precisely when $1-4p\neq0$.  The other face
polynomials are binomials with nonzero coefficients, except that their
simultaneous compatibility is again controlled by $p+q^2-q$.  Thus the
conditions $p\neq0$, $q\neq0$, $1-4p\neq0$, and $p+q^2-q\neq0$ imply Newton
nondegeneracy and smoothness of the toric compactification.  Exact interval
evaluation on $\overline U_\varepsilon$ shows that all four quantities avoid
zero.  This part of the verification uses rational endpoints only.

It remains to analyze the logarithmic map.  Put $z=re^{i\theta}$ and define
$Y_r(\theta)=\log|P(z)/Q(z)|$ whenever $P(z)Q(z)\neq0$.  A point of the graph
is critical for $\Log(z,w)=(\log|z|,\log|w|)$ exactly when
$\partial Y_r/\partial\theta=0$.  Clearing the positive denominator gives
the real trigonometric polynomial
$N_r(\theta)=\operatorname{Im}(A(z)\overline{B(z)})$, where
$A(z)=z(P'Q-PQ')=(1-q)z+2pz^2+pqz^3$ and
$B(z)=P(z)Q(z)=1+(1+q)z+(p+q)z^2+pqz^3$.  It has the Fourier expansion
$N_r(\theta)=C_0(r)+2\operatorname{Re}(C_1(r)e^{i\theta}+C_2(r)e^{2i\theta}+C_3(r)e^{3i\theta})$.
The coefficients $C_k(r)$ are explicit polynomials in $r,p,q,\bar p,\bar q$
obtained by expanding $\operatorname{Im}(A\overline B)$.  This form keeps all
coefficient dependence exact and is the expression evaluated by the attached
certificate.

For brevity write
$A_1(r)=2|C_1(r)|$,
$B_1(r)=|C_0(r)|+2|C_2(r)|+2|C_3(r)|$, and
$D_1(r)=4|C_2(r)|+6|C_3(r)|$.
The following elementary criterion is the central point.  If
$B_1<A_1$ and $D_1^2<A_1^2-B_1^2$, then $N_r$ has exactly two zeros modulo
$2\pi$, and both zeros are simple.  Indeed, after rotating $\theta$ by the
argument of $C_1$, the first harmonic is $A_1\cos\theta$.  The sum of the
remaining harmonics has absolute value at most $B_1$.  Hence every zero lies
in one of the two disjoint arcs on which
$|\sin\theta|\geq\sqrt{1-(B_1/A_1)^2}$.  On either arc the derivative of the
first harmonic has magnitude at least $\sqrt{A_1^2-B_1^2}$, whereas the
derivative of the remaining second and third harmonics has magnitude at most
$D_1$.  The second inequality makes the derivative retain a fixed sign on
each arc.  The intermediate value theorem gives one zero on each arc, and
strict monotonicity gives uniqueness and simplicity.

The two simple critical points on every circle $|z|=r$ are respectively the
unique maximum and minimum of $Y_r$.  Therefore their logarithmic images are
the upper and lower endpoints of the vertical slice of the amoeba.  Conversely
every finite endpoint of such a slice is a critical value, so the critical
value set is exactly the topological boundary.  Simplicity of the critical
zeros implies, by the implicit-function theorem, that the critical points
vary smoothly with $r$.  The preceding strict inequalities also keep the two
branches disjoint and prevent a ramification of their logarithmic images.
Consequently the contour is a smooth embedded curve.  Zeros of $P$ and the
zero of $Q$ correspond to ends of the graph rather than additional points of
$(\C^star)^2$; the conclusion across their exceptional radii follows by taking
one-sided limits of the two monotone extremal branches.  Since a fixed value
of $(\log|z|,\log|w|)$ requires a fixed radius $r$ and a solution of
$Y_r(\theta)=y$, the same monotonicity on the two complementary arcs gives at
most two points in each logarithmic fiber.

The two strict inequalities are certified simultaneously for every
$r\in(0,\infty)$ and every $(p,q)\in\overline U_\varepsilon$.  To make the
domain compact, the verification substitutes $t=r/(1+r)$ and homogenizes
each Fourier coefficient to degree four in $t$.  It then evaluates the real
and imaginary parts by outward rational interval arithmetic on a dyadic cover
of $[0,1]$.  Absolute values are enclosed by rational lower and upper square
roots obtained from $110$-bit integer square roots.  On each interval it
proves positive lower bounds for $A_1-B_1$ and
$A_1^2-B_1^2-D_1^2$.  The cover contains $37680$ intervals and reaches depth
$23$.  The smallest certified lower bounds are greater than
$9.261520943710987\cdot10^{-7}$ and
$4.400128109430214\cdot10^{-16}$, respectively.

Finally, the normalization fixes the coefficients of $1$, $z$, and $w$ to
be positive real numbers.  If a diagonal torus rescaling and multiplication
by an overall scalar made the polynomial real, the phases forced by these
three coefficients would already be trivial modulo signs, and therefore the
remaining normalized coefficients $p$ and $q$ would have to be real.  In the
present box one has
$-10003/50000000\leq\operatorname{Im}p\leq-9997/50000000$ and
$-1750003/50000000\leq\operatorname{Im}q\leq-1749997/50000000$, so neither
imaginary part vanishes.  Integral monomial coordinate changes commute with
the real structure and cannot turn a non-torus-real curve into a real one.
Every simple Harnack curve is real, hence none of these curves is
torus-equivalent to a simple Harnack curve.  All assertions hold on the closed
box and therefore, in particular, on its open interior.
\end{proof}

 %%%%%%%%%%%%%%%I%%%%%%%%%%%%%%%%%%%%%%%%%%%%%%%%%%%%%%%%%%%%%%%%%%%%%%%%%%

\section{Appendix: A Boundary--Contour Example with Interior Lattice Points}

The purpose of this appendix is to give a completely explicit plane curve whose
Newton polygon contains interior lattice points, whose affine curve in
$(\C^*)^2$ is smooth, and whose amoeba contour equals the boundary of its
amoeba, although the defining polynomial is nonreal and the curve is not
torus-equivalent to a simple Harnack curve.  The example is reducible: it is a
disjoint union of three smooth affine components.

Let $P(z)=(z-1)(z-2)=z^2-3z+2$ and $Q(z)=z+1$.  Set
$\alpha=(3+4i)/5$ and $\beta=(5+12i)/13$.  These constants satisfy
$|\alpha|=|\beta|=1$, and $1$, $\alpha$, and $\beta$ are pairwise distinct.
Define
$$
 F(z,w)=\bigl(P(z)+Q(z)w\bigr)
 \bigl(P(z)+\alpha Q(z)w\bigr)
 \bigl(P(z)+\beta Q(z)w\bigr).
$$
Expanding only with respect to $w$ gives the useful exact form
$$
 F=P^3+\frac{129+112i}{65}P^2Qw
 +\frac{31+168i}{65}PQ^2w^2
 +\frac{-33+56i}{65}Q^3w^3,
$$
where $P$ and $Q$ mean $P(z)$ and $Q(z)$.  Thus every coefficient is explicit
and belongs to $\mathbb Q(i)$.

\begin{theorem}
Let $C=V(F)\subset(\C^*)^2$.  Then $C$ is a smooth, disconnected affine
curve with three irreducible components.  Its Newton polygon is
$\Delta=\operatorname{conv}\{(0,0),(6,0),(3,3),(0,3)\}$, and the interior of
$\Delta$ contains exactly seven lattice points.  The logarithmic critical
value set of $C$ is a smooth embedded curve and satisfies
$\operatorname{Contour}(\mathscr{A}(C))=\partial\mathscr{A}(C)$.  The exponent differences of
$\operatorname{supp}(F)$ generate $\mathbb Z^2$, so the polynomial does not
factor through a finite monomial covering solely because its support lies in a
proper affine sublattice.  Moreover, $C$ is not torus-equivalent to a simple
Harnack curve.
\end{theorem}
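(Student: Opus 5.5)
The plan is to reduce everything to the single trapezoidal curve $V(P+Qw)$, where $P=(z-1)(z-2)$ and $Q=z+1$. This is exactly the curve of the theorem in the section ``A Trapezoidal Example with Contour Equal to the Amoeba Boundary''. For $t\in\{1,\alpha,\beta\}$ put $L_t=V(P+tQw)$, so that $C=L_1\cup L_\alpha\cup L_\beta$.

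\textbf{Step 1: components, smoothness, reality.}
\begin{itemize}
\item No point of any $L_t$ has $Q(z)=0$, since $P(-1)=6\neq0$.
\item Each $L_t$ is therefore smooth: $\partial_w(P+tQw)=tQ\neq0$ on $L_t$.
\item The components are pairwise disjoint in the torus. Subtracting the equations for $s\neq t$ gives $(s-t)Q(z)w=0$, which is impossible.
\item $C$ is smooth, by the same product-rule argument used for the reducible $3\Delta_2$ example: at a point of $L_t$ only one factor vanishes.
\item Each factor is irreducible, being linear in $w$ with coprime coefficients $P,Q$. So $C$ has exactly three irreducible components, and they are disjoint, hence $C$ is disconnected.
\end{itemize}

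\textbf{Step 2: Newton polygon and lattice.} I will read the Newton polygon off the displayed expansion in $w$. The $w^0$ part $P^3$ has nonzero coefficients at $z^0$ (namely $8$) and at $z^6$. The $w^3$ part $\frac{-33+56i}{65}Q^3w^3$ contributes the monomials $w^3$ and $z^3w^3$. The $w^j$ part has $z$-degree between $0$ and $6-j$, so all exponents lie in $\Delta=\operatorname{conv}\{(0,0),(6,0),(3,3),(0,3)\}$, which is $3$ times the trapezoid of Section 4.

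The interior point count follows from Pick's formula:
\begin{itemize}
\item $\Area(\Delta)=27/2$;
\item the boundary has $6+3+3+3=15$ lattice points;
\item hence there are $27/2-15/2+1=7$ interior points.
\end{itemize}
For the lattice, the support contains $(0,0)$ and $(1,0)$, since the $z$-coefficient of $P^3$ is $3\cdot 4\cdot(-3)\neq0$. It also contains $(0,1)$, since the constant term of $P^2Q$ is $4$ and its prefactor is nonzero. Thus $L_F=\Z^2$, and the exponent-lattice proposition excludes a factorization through a finite monomial covering.

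\textbf{Step 3: contour.} The unitary torus map $(z,w)\mapsto(z,tw)$ with $|t|=1$ preserves $\Log$ and carries $L_t$ onto $L_1$. Therefore each component has the same amoeba, critical image and contour as $L_1$, and the unions give $\mathscr A(C)=\mathscr A(L_1)$ and $\mathcal C(\mathscr A(C))=\mathcal C(\mathscr A(L_1))$. The Section 4 theorem gives $\mathcal C(\mathscr A(L_1))=\partial\mathscr A(L_1)$.

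Smoothness of this common set needs a separate argument, since Section 4 does not state it. I would follow the slice method of the section ``Smoothness and Boundary Equality for the Amoeba Contour''. With $z=re^{i\theta}$, set $y_r(\theta)=\log|P/Q|$. Using $\partial_\theta\log|re^{i\theta}-a|=ar\sin\theta/|z-a|^2$, one gets
$$\partial_\theta y_r=r\sin\theta\Bigl(\frac1{|z-1|^2}+\frac2{|z-2|^2}+\frac1{|z+1|^2}\Bigr).$$
Hence the critical points are exactly $\theta\in\{0,\pi\}$, and the contour is the union of two graphs over $\log r$:
$$Y_+(r)=\log\frac{|(r-1)(r-2)|}{r+1},\qquad Y_-(r)=\log\frac{(r+1)(r+2)}{|1-r|}.$$
Each graph is an immersed analytic curve, with first-coordinate derivative $1/r$.

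\textbf{Main obstacle.} The step I expect to be hardest is ruling out crossings $Y_+(r)=Y_-(r)$. I would reduce it, exactly as before, to the two real polynomial identities
$$P(r)(1-r)=\pm P(-r)(r+1)$$
and show by direct expansion that neither has a positive root. A sign pattern or a short Sturm count should suffice. The exceptional radii $r=1,2$ give only ends escaping to infinity, handled as in that section. Then the contour is a smooth embedded curve equal to the boundary.

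\textbf{Step 4: not Harnack.} A simple Harnack curve is irreducible. Torus automorphisms preserve the number of irreducible components, and $C$ has three, so $C$ is not torus-equivalent to a simple Harnack curve.
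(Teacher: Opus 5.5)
Your proposal is correct and follows essentially the paper's proof: disjoint unitary rotates of $V(P+Qw)$, the Newton polygon and Pick's formula, reduction of the amoeba and contour to one component via $(z,w)\mapsto(z,tw)$, the angular monotonicity and slice argument, and the component count to exclude torus-equivalence to a simple Harnack curve. The step you flag as the main obstacle is actually immediate, as in the paper: $\partial_\theta y_r>0$ on $(0,\pi)$ already gives $Y_-(r)>Y_+(r)$ wherever both values are finite, and your polynomial route also works, since $(r-1)^2|r-2|<(r+1)^2(r+2)$ for every $r>0$.
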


\begin{proof}
Put $u_1=1$, $u_2=\alpha$, and $u_3=\beta$, and write
$H_j(z,w)=P(z)+u_jQ(z)w$.  Each component $C_j=V(H_j)$ is the graph
$w=-u_j^{-1}P(z)/Q(z)$ over the punctured $z$-plane with the zeros of $P$ and
$Q$ removed when necessary.  Since $P$ and $Q$ have no common zero, a point of
$C_j$ cannot satisfy $Q(z)=0$.  Therefore
$\partial H_j/\partial w=u_jQ(z)\neq0$ everywhere on $C_j$, and every
$C_j$ is smooth.

The components are pairwise disjoint in $(\C^*)^2$.  Indeed, if a point
belonged to $C_j\cap C_k$ with $j\neq k$, subtraction of the two equations
would give $(u_j-u_k)Q(z)w=0$.  The factors $u_j-u_k$ and $w$ are nonzero, so
$Q(z)=0$.  Substitution into either equation would then give $P(z)=0$, which
is impossible because $P(-1)=6$.  The union $C=C_1\sqcup C_2\sqcup C_3$ is
therefore smooth as an affine algebraic curve, although it is reducible.

The Newton polygon of every factor $H_j$ is
$\Delta_0=\operatorname{conv}\{(0,0),(2,0),(1,1),(0,1)\}$.  Newton polygons
add under multiplication, hence $\Newt(F)=3\Delta_0$.  This is precisely
$\Delta=\operatorname{conv}\{(0,0),(6,0),(3,3),(0,3)\}$.  Its Euclidean area
is $27/2$.  The lattice lengths of its four edges are $6$, $3$, $3$, and $3$,
so its boundary contains $15$ lattice points when the vertices are counted in
the standard cyclic manner.  Pick's formula gives
$I(\Delta)=27/2-15/2+1=7$.  More concretely, the interior lattice points are
$(1,1),(2,1),(3,1),(4,1),(1,2),(2,2),(3,2)$.

It remains to determine the amoeba and its contour.  Multiplication of the
$w$-coordinate by a complex number of modulus one does not alter its absolute
value.  Hence all three components have exactly the same logarithmic image:
$\mathscr{A}(C_j)=\mathscr{A}(C_1)$.  Their union therefore has
$\mathscr{A}(C)=\mathscr{A}(C_1)$.  The same phase multiplication is a diffeomorphism
of $(\C^*)^2$ commuting with $\Log$, so it carries the logarithmic critical
locus of $C_1$ to that of $C_j$ without changing the critical values.  It is
thus enough to prove $\operatorname{Contour}\mathscr{A}(C_1)=\partial\mathscr{A}(C_1)$ and the smoothness of
this common contour.

On $C_1$ one has $w=-(z-1)(z-2)/(z+1)$.  Fix $r>0$ and put
$z=re^{i\theta}$ and $u=\cos\theta$.  The second logarithmic coordinate is
$Y_r(\theta)=\log|z-1|+\log|z-2|-\log|z+1|$.  As a function of $u$ it is
$$
 \mathcal Y_r(u)=\frac12\log(r^2+1-2ru)
 +\frac12\log(r^2+4-4ru)
 -\frac12\log(r^2+1+2ru).
$$
Whenever the expression is finite, differentiation gives
$\mathcal Y_r'(u)=-r/(r^2+1-2ru)-2r/(r^2+4-4ru)-r/(r^2+1+2ru)$.  Every
denominator is positive, and consequently $\mathcal Y_r'(u)<0$.  Thus
$\mathcal Y_r$ is strictly decreasing on $[-1,1]$.  Since
$u=\cos\theta$, the angular function $Y_r$ has exactly one maximum and one
minimum, occurring at $\theta=\pi$ and $\theta=0$, apart from the radii at
which one of these values is an end of the affine graph.  Those exceptional
radii are handled by the corresponding one-sided limits and introduce no
additional finite critical branch.

For fixed $x=\log r$, the vertical slice of the amoeba is the image of the
continuous angular function $Y_r$.  Its image is the interval between its
unique minimum and unique maximum.  The two endpoints are precisely the
logarithmic critical values on that circle.  It follows for every vertical
slice, and hence globally, that the critical-value set equals the boundary of
the amoeba.  Strict monotonicity on $0<\theta<\pi$ and on
$\pi<\theta<2\pi$ also shows that the logarithmic map has exactly two points
over every interior point of the amoeba and one point over a regular boundary
point.

The two boundary branches have the explicit parametrizations
$c_+(r)=(\log r,\log|(r-1)(r-2)/(r+1)|)$ and
$c_-(r)=(\log r,\log|(-r-1)(-r-2)/(-r+1)|)$ on their natural intervals.
Their first coordinate has derivative $1/r\neq0$.  Hence each finite branch
is an immersion and has no cusp.  The strict maximum--minimum separation
prevents the two branches from meeting at a finite logarithmic value.  The
common contour is therefore a smooth embedded real curve.

The support is primitive.  The term $P^3$ contains consecutive powers of $z$
with nonzero coefficients, so the exponent-difference lattice contains
$(1,0)$.  The constant terms of $P^3$ and $P^2Qw$ are both nonzero, so it also
contains $(0,1)$.  These two vectors generate $\mathbb Z^2$.  Equivalently,
the greatest common divisor of the $2\times2$ minors of a translated exponent
matrix is one.  Thus this polynomial is not forced to be a monomial pullback
by a proper exponent sublattice.

Finally, the curve is not torus-equivalent to a simple Harnack curve.  The
most immediate reason is that $C$ has three connected irreducible components,
whereas a simple Harnack curve in the usual sense is irreducible.  There is
also a coefficient-phase obstruction to torus reality.  The coefficients of
$P^3$ contain several consecutive nonzero real powers of $z$; after a diagonal
torus rescaling and multiplication by an overall scalar, their simultaneous
reality forces the phase of the $z$-rescaling and the overall phase to be real
modulo signs.  If the phase of the $w$-rescaling is denoted by $\mu$, reality
of the coefficients of $w$ and $w^2$ would force
$B/A^2\in\mathbb R$, where $A=(129+112i)/65$ and $B=(31+168i)/65$.
However, direct calculation gives
$\operatorname{Im}(B/A^2)\neq0$; after clearing the positive real
denominators, its numerator is
$168\cdot4097-31\cdot28896=-207480$.  Hence no torus rescaling makes all
coefficients real.  This completes the proof.
\end{proof}

\begin{figure}[ht]
\centering
\includegraphics[width=0.22\textwidth]{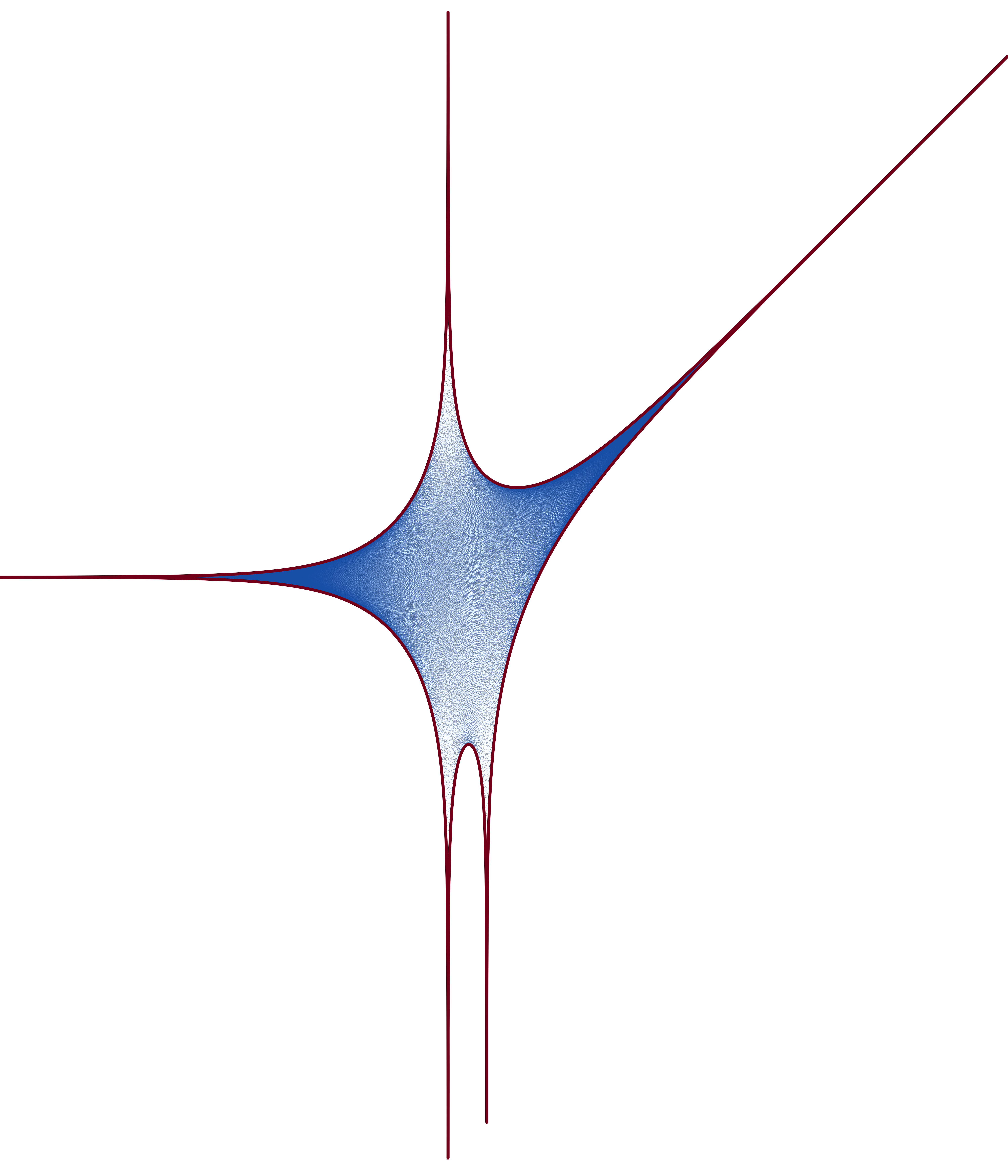}
\caption{Dark-blue amoeba and clear red logarithmic critical values of the curve defined by $F(z,w)=0$}
\end{figure}

\end{document}
%%%%%%%%%%%%%%%%%%%%%%%%%%%%%%%%%%%%%%%%%%%%%%%%%%%%%%%%%%%%%%%%%%%%%%%%%%%%%
%%%%%%%%%%%%%%%%%%%%%%%%%%%%%%%%%%%%%%%%%%%%%%%%%%%%%%%%%%%%%%%%%%%%%%%%%%%%%
%%%%%%%%%%%%%%%%%%%%%%%%%%%%%%%%%%%%%%%%%%%%%%%%%%%%%%%%%%%%%%%%%%%%%%%%%%%%%
%%%%%%%%%%%%%%%%%%%%%%%%%%%%%%%%%%%%%%%%%%%%%%%%%%%%%%%%%%%%%%%%%%%%%%%%%%%%%
%%%%%%%%%%%%%%%%%%%%%%%%%%%%%%%%%%%%%%%%%%%%%%%%%%%%%%%%%%%%%%%%%%%%%%%%%%%%%
%%%%%%%%%%%%%%%%%%%%%%%%%%%%%%%%%%%%%%%%%%%%%%%%%%%%%%%%%%%%%%%%%%%%%%%%%%%%%
%%%%%%%%%%%%%%%%%%%%%%%%%%%%%%%%%%%%%%%%%%%%%%%%%%%%%%%%%%%%%%%%%%%%%%%%%%%%%